\documentclass[12pt,letterpaper]{article}

\usepackage{amssymb,amsfonts,amscd,amsthm}
\usepackage[all,arc]{xy}
\usepackage{enumerate, bbm}
\usepackage{mathrsfs}
\usepackage{tikz}
\usepackage[left=.8in,top=.9in,right=.8in,bottom=.8in]{geometry}
\usepackage{mathtools}
\usepackage{hyperref}
\usepackage{cleveref}
\usepackage{color}
\usepackage{graphicx}
\usepackage{enumitem} 
\graphicspath{ {TexImages/} }

\newtheorem{thm}{Theorem}[section]
\newtheorem{cor}[thm]{Corollary}
\newtheorem{prop}[thm]{Proposition}
\newtheorem{lem}[thm]{Lemma}
\newtheorem{claim}[thm]{Claim}

\newtheorem{prob}[thm]{Problem}

\theoremstyle{definition}
\newtheorem{defn}{Definition}

\hypersetup{
	colorlinks,
	citecolor=blue,
	filecolor=blue,
	linkcolor=blue,
	urlcolor=blue,
	linktocpage
}

\newcommand{\R}{\mathbb{R}}

\newcommand{\al}{\alpha}
\newcommand{\be}{\beta}

\newcommand{\sig}{\sigma}

\newcommand{\half}{\frac{1}{2}}
\newcommand{\quart}{\frac{1}{4}}

\newcommand{\sm}{\setminus}
\newcommand{\sub}{\subseteq}

\renewcommand{\c}[1]{\mathcal{#1}}

\renewcommand{\SS}[1]{\textcolor{red}{#1}}

\newcommand{\dist}{\mathrm{dist}}
\newcommand{\score}{\mathrm{score}}
\newcommand{\tour}{\vec{S}}

\newcommand{\res}{\mathrm{res}}

\title{Metric Dimensions of March Madness Brackets}
\author{Sam Spiro\footnote{Dept.\ of Mathematics and Statistics, Georgia State University.  Email: sspiro@gsu.edu}}
\date{\today}

\begin{document}

\maketitle

\vspace{-3em}

\begin{abstract}
	Say you and some friends decide to make brackets for March Madness and are told how each of your brackets scored.  The question we ask is: when can you determine how the actual tournament went given your scores?  We determine the exact minimum number of brackets needed to do this for any March Madness-style tournament regardless of the scoring system used, and more generally we prove effective bounds for the problem for arbitrary single-elimination tournaments.
\end{abstract}

\section{Introduction}

\textbf{March Madness}. One of the biggest events in American sports is \textit{March Madness}, which is a single-elimination basketball tournament played by the top college teams in March.  A popular component of this event is fans filling out brackets of predictions for which teams they think will win each match of the tournament.  The ideal goal is to make a bracket where every prediction is correct.  However, given that there are 63 matches that need to be guessed correctly, it is unsurprising that no one has ever achieved this $2^{-63}$ probability event despite tens of millions of brackets being made each year and considerable cash prizes being offered for anyone who could achieve this feat.

While it is essentially impossible to construct a perfect bracket, one can still try and make one which is as accurate as possible and, more importantly, which is more accurate than that of any of your friends.  To this end, it is common for people to form groups (known as pools) where each participant makes a bracket of predictions.  At the end of the tournament, each bracket is given a score based on how accurate their predictions were, and the participant with the highest score is declared the winner of the pool. There is no singular standard way for how brackets are scored, though two common scoring systems are to award some scalar multiple of either $i$ or $2^{i}$ points for each correct guess made in round $i$.

In this paper we will unfortunately not be able to answer the question of how to make a bracket which scores higher than any of your friends.  Instead we ask a different question: if you are given some set of brackets from a pool along with the scores that each of these brackets received, under what conditions can you reconstruct how the tournament actually went?  For example, if one of your friends makes a bracket which scores 0 points, then you can immediately determine from this who wins every match in round 1 (namely, every team which your unlucky friend said would lose in round 1), but you can not say anything about what happens in any of the later rounds of the tournament from this information alone.

\textbf{Metric Dimensions}.  This problem, while whimsical on the surface, lies within the well-studied field of metric dimensions.  Given a metric space $(X,d)$, we say that a set of points $\{x_1,\ldots,x_t\}\sub X$ is a \textit{resolving set} if every $x\in X$ can be uniquely identified by the sequences of distances $(d(x,x_1),\ldots,d(x,x_t))$, and the \textit{metric dimension} of the metric space is the smallest size of a resolving set.  In this language, our problem can be restated as asking when a set of brackets is a resolving set for the metric space $(X,d)$ where $X$ is the set of all brackets and where (informally) $d(B,B')$ measures how much the predictions made by brackets $B,B'$ differ from each other in terms of the number of points that they score.

There is a lot of work in the literature dedicated to metric dimensions and resolving sets.  One of the first problems in this field is a coin-weighing problem due to Erd\H{o}s and Renyi from 1963 \cite{erdHos1963two} which received a fair amount of attention \cite{guy1995coin,lindstrom1964combinatory,soderberg1963combinatory}.  However, the most popular area of study for metric dimensions is the setting where $X$ is the vertex set of a graph $G$ and $d(u,v)$ is the distance between the vertices $u,v$ in $G$.  This graphical setting was introduced independently by Slater~\cite{slater1975leaves} and by Harary and Melter~\cite{harary1976metric}, and since then there has been a tremendous amount of work on these problems, in part due to the many applications of metric dimensions to real-world problems from fields such as chemistry~\cite{chartrand2000resolvability}, robotics~\cite{khuller1996landmarks}, and network analysis~\cite{beerliova2005network}.  We refer the reader to the surveys \cite{kuziak2021metric,tillquist2023getting} for a more comprehensive overview of metric dimensions and their applications.

\textbf{Other Tournaments}.  The most common single-elimination tournaments are those like March Madness which have some $2^r$ teams with each team playing the same number of rounds, but there are other kinds that can be considered.  Indeed, many single-elimination tournaments rely on bye matches when the number of teams is not a power of two, creating asymmetries between the participants.  Some single-elimination tournaments even purposelessly create biases for certain teams, with notable examples being the tournaments appearing in the series \textit{Yu Yu Hakusho} and \textit{Hunter X Hunter} by Yoshihiro Togashi.  One can also consider tournaments where matches are played by more than two teams, which is typical for games such as Mahjong or Texas hold'em.   To capture all of these kinds of tournaments, we need to make some formal definitions. 

\subsection{Definitions}

Our definition of single-elimination tournaments is expressed in the language of digraphs.  For this we recall that given a digraph $\tour$ and a vertex $v$ of this digraph, $N^+(v)$ denotes the set of vertices $w$ such that $v\to w$ is a directed edge of $\tour$, and $N^-(v)$ denotes the set of vertices $u$ such that $u\to v$ is a directed edge.  We say that $v$ is a sink if $N^+(v)=\emptyset$ and that $v$ is a source if $N^-(v)=\emptyset$.

\begin{defn}
	A \textit{single-elimination tournament}\footnote{We will sometimes colloquially refer to single-elimination tournaments simply as ``tournaments'', though we emphasize that this is unrelated to the more usual notion of a tournament in the study of digraphs.} $\tour$ is a directed graph on a finite number of vertices with the following properties: 
	\begin{itemize}
		\item[(a)] $\tour$ contains exactly one sink,
		\item[(b)] $|N^+(v)|=1$ for every vertex $v$ of $\tour$ which is not a sink, 
		\item[(c)] $\tour$ contains no directed cycles, i.e.\ no sequence of distinct vertices $(v_1,\ldots,v_t)$ with $t\ge 2$, $v_1=v_t$, and $v_{i+1}\in N^+(v_i)$ for all $1\le i<t$; and
		\item[(d)] $|N^-(v)|\ne 1$ for every vertex $v$ of $\tour$.
	\end{itemize}
\end{defn}

\begin{figure}
	\hspace{4em}
	\begin{tikzpicture}[scale=1.2]
		
		\newcommand{\drawTree}[6]{
			
			\node (A) at (#1, #2) [circle, draw, inner sep=2pt] {#3};
			\node (B) at (#1 - 1, #2 - 1) [circle, draw, inner sep=2pt] {#4};
			\node (C) at (#1 + 1, #2 - 1) [circle, draw, inner sep=2pt] {#5};
			\node (D) at (#1 - 1.5, #2 - 2) [circle, draw, inner sep=2pt] {a};
			\node (E) at (#1 - 0.5, #2 - 2) [circle, draw, inner sep=2pt] {b};
			\node (G) at (#1 + 0.5, #2 - 2) [circle, draw, inner sep=2pt] {c};
			\node (F) at (#1 + 1.5, #2 - 2) [circle, draw, inner sep=2pt] {d};
			
			\draw[->, thick] (B) -- (A);
			\draw[->, thick] (C) -- (A);
			\draw[->, thick] (D) -- (B);
			\draw[->, thick] (E) -- (B);
			\draw[->, thick] (G) -- (C);
			\draw[->, thick] (F) -- (C);
			
			\node at (#1, #2 - 2.5) {#6};
		}
		
		\drawTree{1}{0}{$z$}{$x$}{$y$}{$\tour$}
		\drawTree{5}{0}{$a$}{$a$}{$c$}{$B_1$}
		\drawTree{9}{0}{$b$}{$b$}{$c$}{$B_2$}
	\end{tikzpicture}
	\caption{A standard single-elimination tournament $\tour$ with 4 players $a,b,c,d$, along with two brackets $B_1,B_2$.  Here $\score_\sig(B_1,B_2)=\sig(y)$ for any scoring system $\sig$ since the only match which $B_1,B_2$ agree on is $y$.  Moreover, one can check that $\{B_1,B_2\}$ is a $\sig$-resolving set for every $\sig$.}\label{figure}
\end{figure}

Intuitively properties (a) and (b) say that in a single-elimination tournament, every match except for the finals (represented by the unique sink) has exactly one of its players moving forward in the tournament.  Properties (a), (b), and (c) together are equivalent to saying $\tour$ is an \textit{anti-arborescence}, which is more typically defined as a digraph obtained by starting with an undirected tree and then orienting all of its edges towards some given vertex.   Property (d) is not entirely necessary, but it corresponds to the fact that there is no reason for single-elimination tournaments to have matches involving only a single player (since such a player is guaranteed to go forward), and us assuming this will greatly simplify our proofs.

 It will be useful to introduce a few more definitions around single-elimination tournaments.

\begin{defn}
	Let $\tour$ be a single-elimination tournament.
	\begin{itemize}
		\item We say that a vertex of $\tour$ is a \textit{player} if it is a source, and we denote the set of players of $\tour$ by $P(\tour)$.
		\item We say that a vertex of $\tour$ is a \textit{match} if it is not a source, and we denote the set of matches of $\tour$ by $M(\tour)=V(\tour)\sm P(\tour)$.
		\item We say that $\tour$ is a \textit{standard} single-elimination tournament if it can be obtained by starting with a complete binary tree and then orienting all of its edges towards its center.
		\end{itemize}
\end{defn}
To help distinguish\footnote{One can remember this with some mnemonics: $a,b,c$ are common starting letters for actual names (e.g.\ Alice, Bob, Carl).  The letters $x,y,z$ (which are typically used in math to represent unknown variables) represent that it is unknown ahead of time how a match will go, and we use the last letter $z$ exclusively to refer to the final match of some tournament.  The letters $u,v,w$ lie between these two extremes and thus could be either players or matches.} players and matches, we will exclusively use $a,b,c$ to represent players, $x,y,z$ to represent matches with $z$ always referring to the sink of $\tour$, and $u,v,w$ to represent arbitrary vertices of $\tour$ which could be either players or matches.   We now move on to our definition of brackets.  

\begin{defn}
	Given a single-elimination tournament $\tour$, a \textit{bracket} $B$ of $\tour$ is any function of the form $B:V(\tour)\to P(\tour)$ satisfying the following two conditions:
	\begin{itemize}
		\item[(a)] If $a\in P(\tour)$ is a player, then $B(a)=a$.
		\item[(b)] If $x\in M(\tour)$ is a match, then $B(x)\in \{B(u):u\in N^-(x)\}$.
	\end{itemize}
\end{defn}
Property (b) essentially says that if a bracket predicts player $a$ wins match $x$, then $x$ must either be the first match that $a$ plays in (i.e.\ $a\in N^-(x)$) or the bracket must predict that $a$ wins some earlier match $y\in N^-(x)$.  The last definitions we need are those around scoring systems.
\begin{defn}
	Given a single-elimination tournament $\tour$, a \textit{scoring system} $\sig$ is any function of the form $\sig:M(\tour)\to \R_{>0}$.  We also will make use of the following related definitions:
	\begin{itemize}
		\item Given two brackets $B,B'$ of $\tour$, we define their \textit{$\sig$-score} by
		\[\score_\sig(B,B')=\sum_{x\in M(\tour): B(x)=B'(x)}\sig(x).\]
		\item We say that a set of brackets $\c{B}=\{B_1,\ldots,B_t\}$ is a \textit{$\sig$-resolving set} if for all pairs of distinct brackets $B,B'$, there exists $B_i\in \c{B}$ with $\score_\sig(B_i,B)\ne \score_\sig(B_i,B')$.
		\item We let $\dim(\tour,\sig)$ denote the minimum size of a $\sig$-resolving set and refer to this quantity as the \textit{metric dimension} of $(\tour,\sig)$.
	\end{itemize}
\end{defn}
We note that the function $d(B,B'):=\sum_{x\in M(\tour)} \sig(x)-\score_\sig(B,B')$ defines a metric, and hence a resolving set with respect to  $\score_\sig$ is the same as a resolving set with respect to this metric.

\subsection{Main Results}
\textbf{Metric Dimensions}.  The first question we tackle is: given a single-elimination tournament $\tour$ and a scoring system $\sig$, what is $\dim(\tour,\sig)$, the smallest size of a $\sig$-resolving set?

To start, we consider \textit{standard} single-elimination tournaments $\tour$ like those in March Madness.  If such a tournament has $n$ players, then apriori $\dim(\tour,\sig)$ could be as large as the number of brackets $2^{n-1}$.  Perhaps surprisingly, it turns out that $\dim(\tour,\sig)$ is substantially smaller than this and, even more so, is independent of the choice of the scoring system $\sig$.

\begin{thm}\label{standard}
	If $\tour$ is a standard single-elimination tournament on $n$ players with $n\ge 2$ a power of 2, then for every scoring system $\sig$ we have
	\[\dim(\tour,\sig)=\frac{n}{2}.\]
	Moreover, for all $n$ there exists a set of $n/2$ brackets $\c{B}$ which is $\sig$-resolving for every $\sig$.
\end{thm}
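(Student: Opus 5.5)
The plan is to prove the two inequalities separately. The upper bound will come from an explicit family of $n/2$ brackets that works for every $\sig$ at once. The lower bound will be an argument that fewer than $n/2$ brackets cannot resolve, for any $\sig$. Throughout, I use the half-decomposition of a standard tournament: the final $z$ has two in-neighbours $z_L,z_R$, whose sub-anti-arborescences $\tour_L,\tour_R$ are standard tournaments on $n/2$ players each. I also use that a bracket of $\tour$ is exactly a pair of brackets $(B_L,B_R)$ of the halves together with a choice $B(z)\in\{B_L(z_L),B_R(z_R)\}$. The key observation is
\[\score_\sig(Q,B)=\score_\sig(Q_L,B_L)+\score_\sig(Q_R,B_R)+\sig(z)\,[Q(z)=B(z)].\]

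\textbf{Upper bound.} I will build $\c{B}_n$ with $|\c{B}_n|=n/2$ by induction. For $n=2$ take either bracket: it scores $\sig(z)$ or $0$, so it identifies the winner. For the inductive step I would take $\c{B}_{n/2}=\{Q^1,\dots,Q^{n/4}\}$ for each half and pair them, forming $Q^i=(Q^i_L,Q^i_R)$ with a chosen final winner. This gives $n/4$ brackets, and I then add $n/4$ more brackets built from the same pairs. For example, I could take $(Q^i_L,\overline{Q^i_R})$, where $\overline{\,\cdot\,}$ is an ``opposite'' bracket that disagrees with $Q^i_R$ at every match of a fixed root-to-leaf path, or else simply takes the other final winner. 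The decoding step is then to subtract and add paired scores. This should separate the contribution of $\tour_L$ from that of $\tour_R$, and it must do so using only positivity of $\sig$, never its exact values. Once the halves are separated, the inductive hypothesis recovers $B_L$ and $B_R$, and the final bit is read off from the $\sig(z)$ term. A small case check for $n=4$, matching Figure~\ref{figure} where $B_1,B_2$ disagree on $x$ and the final but agree on $y$, will guide the exact choice of the complementary brackets.

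\textbf{Lower bound.} I would show that every resolving set has at least $\dim$ of each half plus something, arguing by induction that $t<n/2$ brackets fail to resolve. For $n=4$ the argument is direct. A single bracket $Q$ takes at most $2^3$ score values, but a parity/cancellation argument produces two distinct brackets with equal score. Concretely, swap the winner of a first-round match not containing $Q(z)$ and adjust the final so that the two changed terms coincide for every positive $\sig$. For general $n$ the idea is a linear-algebra count. Fix all predictions except the first-round outcomes, with each first-round winner losing in round~2 to a fixed opponent, and compare brackets differing in several first-round matches. Each query bracket $Q$ contributes one linear functional in the $\pm\sig(x)$ differences over first-round matches. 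With fewer than $n/2$ functionals, I would look for a nonzero ``balanced'' combination of swaps in the kernel, meaning one whose score changes cancel for all $Q$. The difficulty is that the swaps must be realised by genuine brackets, so the kernel vector has to be $\{0,\pm1\}$-valued and consistent with the tree structure.

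\textbf{Main obstacle.} I expect the lower bound to be the hard step, specifically making it hold for \emph{every} $\sig$. Generic $\sig$ makes coincidences of scores rare, so naive pigeonhole fails. I would first try to find, for each $Q$, a family of $n/2$ ``local moves'' (one per first-round match) that $Q$ detects only through a single coordinate. Then a set of fewer than $n/2$ queries misses some move, and that move gives two brackets with identical score vectors.
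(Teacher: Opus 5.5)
Both parts of your plan have genuine gaps.

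\textbf{Lower bound.} You are looking for undetectable moves in the wrong place. Suppose $x$ is a first-round match and $B(x)\ne B'(x)$. Every query $Q$ has $Q(x)$ equal to one of the two entrants of $x$, so exactly one of $B,B'$ agrees with $Q$ at $x$. Hence the agreement sets differ, and for any $\sig$ with distinct subset sums this forces $\score_\sig(Q,B)\ne\score_\sig(Q,B')$. The theorem must hold for such $\sig$ too, so no combination of first-round swaps (with or without adjusting the final) can ever give the undetected pair, even against a single query. Your $n=4$ move, for instance, gives a score difference like $\sig(y)$ or $\sig(y)-\sig(z)$, which does not vanish for all $\sig$.

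The missing idea is to change \emph{only} the final. If $B,B'$ agree off $z$, then $\score_\sig(Q,B)-\score_\sig(Q,B')=\sig(z)\bigl([Q(z)=B(z)]-[Q(z)=B'(z)]\bigr)$, which vanishes unless $Q(z)\in\{B(z),B'(z)\}$. Fewer than $n/2$ queries predict fewer than $n/2$ champions, so some left player $a$ and some right player $b$ are never predicted to win $z$. Take brackets that send $a$ through the left half and $b$ through the right half, are identical elsewhere, and have $B(z)=a$, $B'(z)=b$. These are indistinguishable for every $\sig$. This is Proposition~\ref{necessary condition} at $x=z$; no counting or linear algebra is needed.

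\textbf{Upper bound.} The construction is never pinned down, and the key decoding claim fails for the scheme you describe. That claim is that adding and subtracting paired scores separates $\tour_L$ from $\tour_R$ using only positivity. Write $\ell_i,r_i,\bar r_i$ for the scores of $B$'s halves against $Q^i_L,Q^i_R,\overline{Q^i_R}$. You only observe $\ell_i+r_i$ and $\ell_i+\bar r_i$ (plus final terms), and the differences $r_i-\bar r_i$ are not controlled by the induction hypothesis.

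Concretely, take $n=8$ with first-round pairs $a$--$b$, $c$--$d$ on the left and $e$--$f$, $g$--$h$ on the right. Write a bracket as its winners of $(m_{ab},m_{cd},z_L;\,m_{ef},m_{gh},z_R;\,z)$. Using the brackets of Figure~\ref{figure} on each half, your scheme yields e.g.\ the queries $(a,c,a;\,e,g,e;\,e)$, $(b,c,b;\,f,g,f;\,f)$, $(a,c,a;\,f,g,g;\,g)$ and $(b,c,b;\,e,h,h;\,h)$. Then $B=(b,d,d;\,f,g,f;\,d)$ and $B'=(a,d,d;\,f,h,f;\,f)$ receive identical scores from all four whenever $\sig(m_{ab})=\sig(m_{gh})$ and $\sig(z)=2\sig(m_{ab})$.

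What is missing is the ``favorite team'' pairing of Lemma~\ref{favorite team}:
\begin{enumerate}
\item Lift an $n/4$-element resolving set $\widetilde B_1,\dots,\widetilde B_{n/4}$ from \emph{one} half only.
\item For each $i$, use two brackets $B_i,B_i'$ that both restrict to $\widetilde B_i$ on that half. In the other half, first-round opponents $x_i$, resp.\ $x_i'$, win every match they can (including $z$), with everything else equal.
\end{enumerate}
Then $\score_\sig(B_i,B)-\score_\sig(B_i',B)$ is the $\sig$-weight of the matches $x_i$ wins in $B$ minus that of the matches $x_i'$ wins. These sets are initial segments of a single path to $z$, so positivity of $\sig$ reveals exactly how far each player went. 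This determines $B$ on the non-lifted half and whether that half supplied the champion. Subtracting these known contributions leaves the scores of the lifted half against the $\widetilde B_i$, and induction finishes. The result is $2\cdot n/4=n/2$ brackets that are resolving for every $\sig$ simultaneously.
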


Theorem~\ref{standard} follow from more general upper and lower bounds on $\dim(\tour,\sig)$ for arbitrary choices of $\tour$.  Our strongest upper bound Theorem~\ref{upper bound technical} is too technical to state here, so we instead record one of its corollaries which says that $\dim(\tour,\sig)$ is always strictly less than the number of players for every choice of $\tour$ and $\sig$.

\begin{thm}\label{upper bound}
	For every single-elimination tournament $\tour$ with $n$ players and any scoring system $\sig$, we have
	\[\dim(\tour,\sig)\le n-1.\]
\end{thm}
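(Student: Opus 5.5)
I would prove Theorem~\ref{upper bound} by induction on the structure of $\tour$. The induction peels off the sink $z$. Let $u_1,\ldots,u_k$ (with $k\ge 2$ by property (d)) be the in-neighbours of $z$. Let $\tour_i$ be the sub-tournament consisting of $u_i$ and everything with a directed path to $u_i$, and let $n_i=|P(\tour_i)|$, so that $n=\sum_i n_i$. If $u_i$ is a player then $\tour_i$ is trivial and needs no brackets. Otherwise $\tour_i$ is itself a single-elimination tournament, and by induction (with $\sig$ restricted to $M(\tour_i)$) it has a resolving set of size at most $n_i-1$. The budget is then $\sum_i(n_i-1)=n-k$ brackets for the subtrees, leaving $k-1$ extra brackets to decide who wins $z$. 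This count is exactly right, which suggests the proof should go this way.

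The difficulty is that scores are global. A bracket designed for $\tour_i$ must be extended to all of $\tour$, and its total score then picks up unknown contributions from the other $\tour_j$ and from $z$. To control this, I would strengthen the induction hypothesis to a \emph{relative} form. The strengthened claim is that for every $\tour$ with $n$ players and every $\sig$ there are brackets $D,B_1,\ldots,B_{n-1}$ such that the differences $\score_\sig(B_j,\cdot)-\score_\sig(D,\cdot)$ already distinguish all brackets. The extra bracket $D$ is only an anchor.

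I would set up the construction on $\tour$ as follows.
\begin{itemize}
\item Fix the anchor $D_i$ of each nontrivial $\tour_i$.
\item Extend every bracket $B$ from $\tour_i$'s family by $D_j$ on every $\tour_j$ with $j\ne i$.
\item Choose the prediction at $z$ to be a fixed candidate that is the same across the whole family, namely $D_{i+1}(u_{i+1})$ with indices taken cyclically. This is a legal choice because every member of the family follows $D_{i+1}$ on $\tour_{i+1}$.
\end{itemize}
With these choices the unknown terms $\sum_{j\ne i}\score(D_j,\cdot)$ and the $z$-indicator are identical within family $i$. Hence differences within the family equal differences of $\tour_i$-scores, and the hypothesis determines the outcome on $\tour_i$.

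Once every $\tour_i$ is reconstructed, all the restricted scores are known. The remaining unknown is the winner of $z$, which is one of the $k$ known candidates $w_i$, the actual winners of the $u_i$. I would then compare the anchors of consecutive families. Each comparison isolates a term of the form $\sig(z)\bigl([w=D_{i+1}(u_{i+1})]-[w=D_{i}(u_{i})]\bigr)$, plus quantities that are now known, where $w$ is the actual winner of $z$. From these terms the winner of $z$ should be identifiable. The global anchor for $\tour$ is one of these extended anchors, which gives the count $1+(n-1)$.

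The main obstacle is the bookkeeping in the last step. I must check that the $k$ family anchors, compared with one another, really pin down the winner of $z$ even though the $D_i(u_i)$ may differ from the actual winners $w_i$. I must also make sure the total is $n-1$ brackets plus one anchor, rather than one more than that. If predicting $D_{i+1}(u_{i+1})$ at $z$ fails to separate the candidates, I would instead let the $z$-prediction of family $i$ be $B(u_i)$. That makes the $z$-indicator vary inside the family, and I would absorb it into the relative-score hypothesis. Finally, removing the anchor requires one more argument: $D$ by itself is a legitimate bracket, and I would check that the non-strengthened statement follows, possibly by taking $D$ to be one of the $B_j$.
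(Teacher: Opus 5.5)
There is a genuine gap, both in the inductive step and in the final count. The version in which family $i$ predicts $D_{i+1}(u_{i+1})$ at $z$ cannot work. Every bracket you build predicts one of the $k$ fixed players $d_j=D_j(u_j)$ at $z$. So if $u_i,u_j$ ($i\ne j$) are both matches, choose $a\in P(u_i)\sm\{d_i\}$ and $b\in P(u_j)\sm\{d_j\}$. Then $x_{a,b}=z$, no bracket in your set predicts $a$ or $b$ there, and Proposition~\ref{necessary condition} says the set is not resolving for any $\sig$. Concretely, take the standard 4-player tournament, with $u_1$ on $a,b$, $u_2$ on $c,d$, and $D_1,D_2$ letting $a,c$ win. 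Your brackets, written as winners of $(u_1,u_2,z)$, are $(a,c,c),(b,c,c),(a,c,a),(a,d,a)$, and both $(b,d,b)$ and $(b,d,d)$ score $0,\sig(u_1),0,\sig(u_2)$ against them.

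Your fallback (family $i$ predicts its own $u_i$-winner at $z$) unwinds to essentially the paper's family $\{\widehat{B}_a:a\in P(\tour)\}$ of Corollary~\ref{favorite player corollary}. However, ``absorbing'' the $z$-term into the hypothesis is not justified. Inside family $i$ the observed differences are relative $\tour_i$-scores either for $\sig$ or for $\sig$ with $\sig(u_i)$ raised by $\sig(z)$, depending on the unknown event $B(z)\in P(u_i)$. A hypothesis about one scoring system at a time, with brackets allowed to depend on it, says nothing when two candidate brackets lie on opposite sides of this event. The paper needs a separate direct argument for exactly this, Lemma~\ref{favorite team}.

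More fundamentally, your relative statement uses $n$ brackets, so even once proved it only gives $\dim(\tour,\sig)\le n$, which is already Corollary~\ref{favorite player corollary}. The whole content of the theorem is saving one bracket, and nothing in your plan does that. Taking $D$ to be one of the $B_j$ is impossible in general: for a single match between two players it would require one bracket whose identically zero self-difference distinguishes two brackets.

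The paper saves the bracket by choosing a bottom vertex $u$. Either $u$ is a player whose first match is not a two-player match against another player (so $\dim(\tour_u,\sig_u)=0$), or otherwise $u$ is a first-round match $x$ with $N^-(x)=\{a,a'\}$ (so $\dim(\tour_x,\sig_x)=1$). Only the players outside $P(u)$ receive favorite-player brackets $\widehat{B}_a$, grouped via Lemma~\ref{partitions exist} so that each has a partner for Lemma~\ref{favorite team}. The resolving set of $\tour_u$ is then realized for free as restrictions of those same brackets (Propositions~\ref{lifting brackets} and~\ref{metric upper bound given partition}). This gives $n-|P(u)|+\dim(\tour_u,\sig_u)=n-1$.
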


This bound is best possible for arbitrary $\tour$, as can be seen by considering any tournament with only 1 match.  Our general lower bound requires a definition to state, but has the advantage of being tight for \textit{every} single-elimination tournament $\tour$ for at least one choice of $\sig$.

\begin{defn}\label{Definition Pu}
	A sequence of vertices $(u_1,\ldots,u_t)$ of a digraph $\tour$ is a \textit{directed walk from $u_1$ to $u_t$} if $u_{i+1}\in N^+(u_i)$ for all $1\le i<t$.  For a single-elimination tournament $\tour$, we define the \textit{player set} $P(u)$ for each $u\in V(\tour)$ to be the set of players $a$ for which there exists a directed walk from $a$ to $u$.
\end{defn}

For example, $P(a)=\{a\}$ for every player $a$ since players are sources and hence have no non-trivial walks to them, and intuitively one should think of $P(x)$ for a match $x$ as being the set of players which could theoretically compete in $x$.
\begin{thm}\label{lower bound}
	For every single-elimination tournament $\tour$ with at least 2 players and any scoring system $\sig$, we have
	\[\dim(\tour,\sig)\ge \max_{x\in M(\tour)}\left( |P(x)|-\max_{u\in N^-(x)} |P(u)|\right).\]
	Moreover, for every $\tour$ there exists a scoring system $\sig$ such that this bound is tight.
\end{thm}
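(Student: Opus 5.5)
The plan is to prove the lower bound by a vertex-cover argument localized at a single match $x$, and then to obtain tightness from a scoring system with separated weights.

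\textbf{Lower bound.} Fix a match $x$ with in-neighbours $u_1,\ldots,u_k$ ($k\ge 2$ by property (d)), and let $m=\max_i |P(u_i)|$. Let $G_x$ be the complete multipartite graph on $P(x)$ with parts $P(u_1),\ldots,P(u_k)$. For every edge $ac$ of $G_x$, with $a\in P(u_i)$ and $c\in P(u_j)$, $i\ne j$, I will build two brackets $B_{ac}$ and $B'_{ac}$ that differ only at $x$.
\begin{itemize}
\item Both predict that $a$ wins every match on its path up to $u_i$, and that $c$ wins every match on its path up to $u_j$.
\item The remaining matches below $x$ are filled in arbitrarily, but identically in the two brackets.
\item At $x$, $B_{ac}(x)=a$ and $B'_{ac}(x)=c$.
\end{itemize}
The matches above $x$ must not depend on who wins $x$. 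If $x\ne z$, let $y$ be the out-neighbour of $x$. By property (d), $y$ has another in-neighbour $w\neq x$. Both brackets predict that some fixed player of $P(w)$ wins $y$, and they agree on everything else above $x$. Hence both are valid brackets, and for any bracket $R$
\[\score_\sig(R,B_{ac})-\score_\sig(R,B'_{ac})=\sig(x)\bigl([R(x)=a]-[R(x)=c]\bigr).\]
Since $\sig(x)>0$, this difference is nonzero exactly when $R(x)\in\{a,c\}$. So for a $\sig$-resolving set $\c{B}$, the set $\{R(x):R\in\c{B}\}$ must be a vertex cover of $G_x$. The complement of a vertex cover of a complete multipartite graph is independent, hence contained in a single part. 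Therefore every vertex cover has size at least $|P(x)|-m$, which gives $|\c{B}|\ge |P(x)|-m$. Maximizing over $x$ gives the bound.

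\textbf{Tightness.} Choose $\sig$ superincreasing, e.g.\ $\sig(x)=2^{\iota(x)}$ for an injective labelling $\iota$ of $M(\tour)$. Then $\score_\sig(R,B)$ determines exactly the set of matches on which $R$ and $B$ agree. The true bracket $B$ can then be decoded bottom-up. Suppose the winners of all in-neighbours of $x$ are already known, so one candidate per child is known. It suffices that the values $\{R(x):R\in\c{B}\}$ contain, for every possible configuration of candidates, the candidates of all children but one. Knowing which $R$ agree with $B$ at $x$ then pins down $B(x)$: either some $R$ agrees, or the winner is the unique uncovered candidate.

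This is guaranteed if $\{R(x)\}$ contains all of $P(x)$ outside one largest part. So I need $t:=\max_x(|P(x)|-\max_u|P(u)|)$ brackets $R_1,\ldots,R_t$ such that at every match $x$ the multiset of predicted winners covers $P(x)\setminus P(u^*_x)$ for some child $u^*_x$. I will build these top-down or by induction on the tree. At $x$, designate a largest child $u^*_x$. Assign the $t$ bracket slots so that the players of $P(x)\sm P(u^*_x)$ each win $x$ in some slot. The remaining slots are routed into $u^*_x$, and each child $u$ must receive at least $|P(u)|-\max|P(\cdot)|$ slots, recursively.

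\textbf{Main obstacle.} The hard step is checking that this slot bookkeeping is consistent. A bracket predicting that $a$ wins $x$ must also predict that $a$ wins all earlier matches on $a$'s path, and this consumes slots in the subtrees. I would first try proving, by induction from the leaves, that $|P(u)|-1$ slots suffice to realize every player of $P(u)$ as a winner at $u$ in some slot. This needs care for the largest child. If the bookkeeping fails, I would adjust $\sig$, for instance by giving each $x$ a weight that dominates its whole subtree, so that partial information can be reused across levels.
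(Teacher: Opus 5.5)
You have a genuine gap. Your lower bound is correct and is essentially the paper's argument: the pair $B_{ac},B'_{ac}$ is the construction of Proposition~\ref{necessary condition}, and the vertex-cover observation on $G_x$ repackages the counting over the classes $\c{B}_u$ in Proposition~\ref{lower bound deterministic}. Your reduction for tightness is also sound, and in fact sharp. For $\sig$ with distinct subset sums, $\c{B}$ is $\sig$-resolving if and only if every match $x$ has a child $u^*_x$ with $P(x)\sm P(u^*_x)\sub\{R(x):R\in\c{B}\}$. Sufficiency follows from your bottom-up decoding, and necessity from your own lower-bound argument. So your fallback of re-weighting $\sig$ cannot help. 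The tightness claim rests entirely on constructing $t$ brackets with this covering property, and that is exactly the step you leave open, so the second half of the theorem is unproved.

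The bookkeeping you sketch would also fail. If only the slots left over at $x$ are routed into $u^*_x$, then in a standard tournament all $t=n/2$ slots are consumed at the sink. None remain for the half $\tour_{u^*}$, which by itself needs $n/4$ distinct winners at its top match. Your proposed invariant, that $|P(u)|-1$ slots realize every player of $P(u)$ as a winner at $u$, is impossible as stated, since each slot has one winner at $u$. In any case $|P(u)|-1$ can far exceed $t$.

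The missing idea is that slots are reused, not split. In a bracket where $x$ is won by a player outside $P(u^*_x)$, nothing inside $\tour_{u^*_x}$ is constrained, so all $t$ brackets can be passed down to the largest child. This is the paper's induction in Lemma~\ref{tight deterministic}:
\begin{itemize}
\item[1.] Take $w$ a largest in-neighbour of $z$. By induction, obtain $r$ brackets for $\tour_w$, with repetitions. The maximum defining the bound for $\tour_w$ ranges over a subset of the same terms, so it is at most $r$.
\item[2.] Write $P(\tour)\sm P(w)=\{a_1,\ldots,a_r\}$ with repetitions, which is possible because $|P(z)|-|P(w)|\le r$.
\item[3.] Extend the $i$-th bracket so that every match outside $\tour_w$ is won by a player outside $P(w)$ (Lemma~\ref{constructing lifting brackets}).
\item[4.] Then let $a_i$ win every match it can (Lemma~\ref{bracket modification}). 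This leaves $\tour_w$ untouched because $a_i\notin P(w)$.
\end{itemize}
Each match $x\notin V(\tour_w)$ then has every player of $P(x)\sm P(w)$ as its winner in some bracket. This covers $P(x)$ outside the at most one child meeting $P(w)$ (Lemma~\ref{touru has at most one bad inneighbor}). Matches of $\tour_w$ are covered by induction.
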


To get a handle on this quantity, we observe that if the sink $z$ of $\tour$ has $N^-(z)=\{u,v\}$ with $|P(u)|=|P(v)|=n/2$ (i.e.\ if the ``left'' half of the bracket and the ``right'' half of the bracket have the same number of players), then Theorem~\ref{lower bound} applied with $x=z$ gives $\dim(\tour,\sig)\ge n/2$.  Thus for most reasonable tournaments, Theorem~\ref{lower bound} shows that $\dim(\tour,\sig)$ is linear in the number of players for arbitrary scoring systems $\sig$.  We emphasize, however, that the bound of Theorem~\ref{lower bound} can be as small as 1 for arbitrarily large tournaments and that this lower bound is tight for appropriate choices of $\sig$ in these cases.  We also emphasize that there exist single-elimination tournaments such that $\dim(\tour,\sig)$ depends on $\sig$, meaning that Theorem~\ref{lower bound} will not be optimal for every choice of scoring system in general. 

\textbf{Resolving Numbers}.  Finding resolving sets of size exactly $\dim(\tour,\sig)$ can be difficult.  What if we wanted to be lazy and just take a large number of distinct brackets and hope that our set ends up being resolving?  At what point could we guarantee that our collection is always resolving?

To this end, we define the \textit{resolving number} $\res(\tour,\sig)$ to be the minimum number $r$ such that every set of $r$ brackets is a $\sig$-resolving set for $\tour$.  This definition is analogous to the notion of the resolving number of graphs introduced by Chartrand, Poisson, and Zhang in 2000~\cite{chartrand2000resolvability} which has received a fair amount of attention within the metric dimension community \cite{garijo2013metric,garijo2013resolving,tennenhouse2015new}.  

We trivially have $\dim(\tour,\sig)\le \res(\tour,\sig)$, and as such Theorem~\ref{lower bound} suggests that $\res(\tour,\sig)$ will often be at least linear in the number of players.  In fact, $\res(\tour,\sig)$ turns out to be nearly equal to the total number of brackets for almost every $\tour$ and is largely controlled by the maximum probability of a given player winning a uniform random bracket.

\begin{thm}\label{resolving with qmax alone}
	Let $\tour$ be a single-elimination tournament with at least 2 players.   Let $z$ denote the sink of $\tour$, let $R$ be a uniformly random bracket of $\tour$, and define
	\[q_{\max}=\max_{a\in P(\tour)}\Pr[R(z)=a].\]
	If there are a total of $N$ brackets for $\tour$, then for any scoring system $\sig$ we have
	\[(1-2q_{\mathrm{max}})N< \res(\tour,\sig)\le (1-q_{\max})N.\]
\end{thm}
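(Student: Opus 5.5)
The two bounds will be handled separately, with $\res(\tour,\sig)=r$ read as: every set of $r$ brackets resolves, while some set of $r-1$ brackets does not.

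\textbf{Upper bound.} It suffices to show that any set $\c{B}$ of more than $(1-q_{\max})N$ brackets is $\sig$-resolving. Let $a$ be a player attaining $q_{\max}$. Then exactly $q_{\max}N$ brackets predict $a$ winning the final $z$, so there are $(1-q_{\max})N$ brackets with $B(z)\ne a$, and $\c{B}$ must contain a bracket with $B(z)=a$. That alone does not resolve everything, so I will instead argue directly. Fix distinct brackets $B,B'$; I want to show that the set $S(B,B')$ of brackets $C$ with $\score_\sig(C,B)=\score_\sig(C,B')$ has size at most $(1-q_{\max})N$. Then any $\c{B}$ of larger size contains some $C\notin S(B,B')$, which distinguishes $B$ from $B'$.

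To bound $|S(B,B')|$, I would use an involution or injection argument. Let $x$ be a match where $B(x)\ne B'(x)$ that is minimal, meaning the earliest such match. I will construct an injection from a set of size $q_{\max}N$ into the complement of $S(B,B')$. The natural candidate is this: for brackets $C$ with $C(z)=c$ for a fixed player $c$ (there are at least $q_{\max}N$ such brackets for a suitable $c$, and I may choose $c$ cleverly), modify $C$ along the path of $B(x)$ so that exactly one of the score differences changes. Concretely, pair each $C$ with a bracket $C'$ that differs from $C$ only in a way that adds $\sig$ weight to agreement with $B$ but not with $B'$. Then among $C,C'$ at most one lies in $S$. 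If such a pairing covers all brackets, it would give $|S|\le N/2$. That is stronger than needed, and it is false in general, because large $q_{\max}$ must matter. So the honest plan is to find a family of at least $q_{\max}N$ brackets, each equivalently mapped to a non-$S$ bracket.

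\textbf{Lower bound.} I need a non-resolving family of size at least $(1-2q_{\max})N$. I would take two brackets $B,B'$ that differ only at the final $z$: both agree everywhere except $B(z)=a$ and $B'(z)=b$, where $a,b$ are the two finalists. Then $\score_\sig(C,B)=\score_\sig(C,B')$ exactly when $C(z)\notin\{a,b\}$. So the family of all brackets with $C(z)\notin\{a,b\}$ fails to resolve $B,B'$, and it has size $N(1-\Pr[R(z)=a]-\Pr[R(z)=b])\ge (1-2q_{\max})N$. This gives $\res>(1-2q_{\max})N$, and the strict inequality is automatic since the failing set has that size. The brackets $B,B'$ exist because $z$ has at least two in-neighbours $u,v$; I fix any choices below them and let $B(u)=a$, $B(v)=b$.

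\textbf{Main obstacle.} The hard step is the upper bound's injection, specifically choosing the right ``pivot'' structure so that exactly $q_{\max}N$ brackets are guaranteed outside $S(B,B')$. My first attempt would be to take a topmost match $y$ where $B(y)\ne B'(y)$, say with $B(y)=a'$. Then, for each bracket $C$, toggling $C$'s prediction along the route of $a'$ or $b'$ changes exactly one score. I would then compare the resulting count with the probability that a fixed player wins, using the fact that $\Pr[R(y)=a']\ge \Pr[R(z)=a']$, which is at most $q_{\max}$-related.
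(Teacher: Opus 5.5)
Your lower bound is correct and is the paper's argument: two brackets that differ only at $z$, with $B(z)=a\in P(u)$ and $B'(z)=b\in P(v)$ for distinct $u,v\in N^-(z)$, are not separated by any $C$ with $C(z)\notin\{a,b\}$ (this is Proposition~\ref{necessary condition} with $x_{a,b}=z$).

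The upper bound has a genuine gap, and it cannot be closed, because the inequality $\res(\tour,\sig)\le(1-q_{\max})N$ is false. Your reformulation is exact: a set is non-resolving iff it lies inside some $S(B,B')$, so $\res(\tour,\sig)=1+\max_{B\neq B'}|S(B,B')|$. This also shows that your reduction via sets of \emph{more than} $(1-q_{\max})N$ brackets could only ever give $(1-q_{\max})N+1$.

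Here is a counterexample. Let $z$ have in-neighbours a player $c$ and a match $y$, with $\tour_y$ a standard tournament on $m\ge 4$ players. Then $N=2^m$ and $\Pr[R(z)=c]=\half$, so $(1-q_{\max})N=N/2$. Take $a,b$ in opposite halves of $\tour_y$. Let $B,B'$ agree everywhere except $B(y)=a$ and $B'(y)=b$, with $B(z)=B'(z)=c$. Then for every $\sig$, $S(B,B')=\{C:C(y)\notin\{a,b\}\}$, which has size $(1-2/m)N\ge N/2$. Hence $\res(\tour,\sig)\ge N/2+1>(1-q_{\max})N$.

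Already $m=4$ violates the stated bound: five players, $N=16$, and a non-resolving set of size $8$. For $m=8$, even your intermediate target $|S(B,B')|\le(1-q_{\max})N$ fails, since $192>128$. The statement also contradicts the paper's own bound $\res(\tour,\sig)>(1-q_{\mathrm{pair}})N$, because here $q_{\mathrm{pair}}\le 2/m$.

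The paper's route to the upper bound is not a per-pair injection. It uses Corollary~\ref{favorite player corollary}: for every bracket $\widehat{B}$, the family $\{\widehat{B}_a:a\in P(\tour)\}$ is $\sig$-resolving. It then double counts, for a non-resolving $\c{B}$, the pairs $(\widehat{B},B)$ with $B\in\{\widehat{B}_a:a\in P(\tour)\}\sm\c{B}$. Every $\widehat{B}$ lies in at least one such pair. A fixed $B$ with $B(z)=b$ arises from exactly $\prod_{x:\,b\in P(x)}|N^-(x)|=1/\Pr[R(z)=b]$ brackets $\widehat{B}$.

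The paper bounds this count by $q_{\max}^{-1}$. But since $\Pr[R(z)=b]\le q_{\max}$, the inequality goes the other way, and that reversed step is exactly where the claimed bound breaks. What the double count genuinely proves is $|\c{B}|\le(1-q_{\min})N$ for every non-resolving $\c{B}$, where $q_{\min}=\min_{a}\Pr[R(z)=a]$. Equivalently, $\res(\tour,\sig)\le(1-q_{\min})N+1$.

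That corrected statement is the one worth aiming for. The ingredient your sketch lacks is this resolving family $\{\widehat{B}_a\}$ together with the double count, rather than a bound on each $S(B,B')$ separately.
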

For example, if $\tour$ is a standard single-elimination tournament with $n$ players then $q_{\max}=n^{-1}$ since each player is equally likely to win the finals, so Theorem~\ref{resolving with qmax alone} gives bounds of the form $(1-2n^{-1})2^{n-1}\le \res(\tour,\sig)\le (1-n^{-1})2^{n-1}$.  We can say some slightly stronger facts about $\res(\tour,\sig)$ in general; see the Appendix for more on this.


\textbf{Organization}.  We start with a high-level proof sketch of Theorem~\ref{standard} in Section~\ref{sec:sketch}.  We then establish some general preliminary results in Section~\ref{sec:preliminaries}, after which we prove all of our lower bounds  in Section~\ref{sec:lower} and all of our upper bounds in Section~\ref{sec:upper}.

\section{Proof Sketch}\label{sec:sketch}
To illustrate our key proof ideas, we give a brief proof sketch of Theorem~\ref{standard}.  To this end, let $\tour$ denote a standard single-elimination tournament with $n\ge 2$ a power of 2 number of players.

To see that $\dim(\tour,\sig)\ge n/2$, observe that any set of brackets $\c{B}$ of size less than $n/2$ has some player $a$ from the ``left side'' of $\tour$ which is never predicted to win the final match $z$ by any bracket in $\c{B}$, and similarly there exists some player $b$ from the ``right side'' which is never predicted to win $z$.  Thus, if we let $B,B'$ be two brackets which agree with each other everywhere outside of $z$ with $B$ predicting that $a$ wins and $B'$ predicting that $b$ wins, then no bracket in $\c{B}$ can distinguish between $B$ and $B'$ (since a bracket can distinguish these two if and only if the bracket has $a$ or $b$ winning the final match).  This proves the lower bound of $n/2$.  More generally, we can replicate this same argument for any single-elimination tournament and any match in place of $z$ to give our key lemma Proposition~\ref{necessary condition} which is the crux of every single one of our lower bounds.

We prove the upper bound $\dim(\tour,\sig)\le n/2$ by induction on $n$.  To this end, we observe that the ``right half'' of $\tour$ is essentially a standard single-elimination tournament with $n/2$ players, and as such we can inductively find a set of brackets $\widetilde{B}_1,\ldots,\widetilde{B}_{n/4}$ which is resolving for any scoring system on this smaller tournament.  Now label the players on the ``left half'' of $\tour$ by $x_1,x_2,\ldots,x_{n/4},x'_1,\ldots,x'_{n/4}$ in such a way that $x_i,x'_i$ play against each other in their first match.  For each $1\le i\le n/4$, define $B_i$ to be any bracket for $\tour$ which has $x_i$ winning the whole tournament and which equals $\widetilde{B}_i$ when restricted to the ``right half'' of $\tour$, and further define $B'_i$ to be identical to $B_i$ except that it has $x'_i$ winning the tournament.  Note that the brackets given in Figure~\ref{figure} are exactly of this form.

Crucially, we observe that for every bracket $B$, the difference $\score_\sig(B_i,B)-\score_\sig(B_i',B)$ determines the set of matches that $x_i$ and $x'_i$ win in $B$.  For example, this difference is positive if and only if $x_i$ wins the match between $x_i$ and $x'_i$ in round 1, with the exact difference equaling the sum of all the scores of the matches that $x_i$ wins in $B$.  As such, knowing $\score_\sig(B_i,B)$ and $\score_\sig(B_i',B)$ for all $i$ completely determines the matches won by each of the $x_i$ and $x'_i$ players.  Because of this knowledge of the ``left half'' of the tournament, we can write $\score_\sig(B_i,B)$ as a function of the score restricted only to the ``right half'' of $\tour$.  Because the restrictions of these $B_i$ brackets are exactly the resolving set $\widetilde{B}_1,\ldots,\widetilde{B}_{n/4}$, we can in turn completely determine the ``left half'' as well, proving that this is indeed a resolving set.  This proves our upper bound for standard single-elimination tournaments, and our upper bounds for other tournaments similarly rely on ``lifting'' resolving sets from sub-tournaments together with using pairs of brackets $B_i,B'_i$ which differ by each other on a single player.

\section{Preliminaries}\label{sec:preliminaries}
\textbf{Player Sets}.  Our first goal will be to establish some basic facts about the player sets $P(u)$ from Definition~\ref{Definition Pu} through our forthcoming Proposition~\ref{Pu Sets}. For this, we need some lemmas about single-elimination tournaments which will not appear outside this section of the paper.

\begin{lem}\label{sources and sinks}
	Let $\tour$ be any finite digraph without directed cycles.
	\begin{itemize}
		\item[(i)] There exist no closed walks in $\tour$, i.e.\ no directed walks $(u_1,\ldots,u_t)$ with $u_1=u_t$ and $t\ge 2$.
		\item[(ii)] For every vertex $u$ of $\tour$, there exists a directed walk from $u$ to a sink.
		\item[(iii)] For every vertex $u$ of $\tour$, there exists a directed walk from a source to $u$.
	\end{itemize}
\end{lem}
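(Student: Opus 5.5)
The plan is to prove (i) directly from the definition of having no directed cycles, and then deduce (ii) and (iii) from (i) by a longest-walk argument, using finiteness of $\tour$.

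\textbf{Part (i).} Suppose for contradiction that a closed walk exists, and choose one $(u_1,\ldots,u_t)$ with $u_1=u_t$, $t\ge 2$, and $t$ minimal.
\begin{itemize}
	\item If the vertices $u_1,\ldots,u_{t-1}$ are pairwise distinct, then this walk is exactly a directed cycle in the sense of the definition. (The definition says ``distinct'' while requiring $v_1=v_t$; we read it as $v_1,\ldots,v_{t-1}$ distinct.) This is a contradiction.
	\item Otherwise there are indices $1\le i<j\le t-1$ with $u_i=u_j$. Then $(u_i,\ldots,u_j)$ is a closed walk with $j-i+1\ge 2$ vertices. It is strictly shorter than $t$, because $j-i+1\le t-1$. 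This contradicts minimality.
\end{itemize}
The only subtlety in the whole lemma is this reading of the definition's distinctness convention; the minimal-counterexample step itself is routine.

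\textbf{Part (ii).} Fix a vertex $u$ and consider all directed walks starting at $u$; the trivial walk $(u)$ shows there is at least one.
\begin{itemize}
	\item By (i), every such walk has pairwise distinct vertices. If $u_i=u_j$ with $i<j$, the segment $(u_i,\ldots,u_j)$ would be a closed walk.
	\item So every such walk has length at most $|V(\tour)|$, and we may choose a walk $(u=u_1,\ldots,u_t)$ of maximum length.
	\item If $u_t$ were not a sink, it would have an out-neighbour $w$, and $(u_1,\ldots,u_t,w)$ would be a longer walk starting at $u$. This contradicts maximality.
\end{itemize}
Hence $u_t$ is a sink, and the chosen walk goes from $u$ to a sink.

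\textbf{Part (iii).} Apply the same argument to walks ending at $u$. Take a walk $(u_1,\ldots,u_t=u)$ of maximum length, which exists by the same bound from (i). If $u_1$ had an in-neighbour $w$, then $(w,u_1,\ldots,u_t)$ would be a longer walk ending at $u$. So $u_1$ is a source, and the walk goes from a source to $u$.

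Equivalently, (iii) follows by applying (ii) to the reversed digraph. Reversal keeps the digraph acyclic and swaps sources with sinks.
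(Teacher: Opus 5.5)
Your proof is correct and follows the paper's approach: part (i) is the same minimal-closed-walk argument, with the same reading of the distinctness convention in the definition of a directed cycle. For (ii) and (iii) you take a maximum-length walk, whereas the paper extends a walk greedily and uses finiteness together with (i) to show the process must stop at a sink (respectively, a source); this is the same idea phrased extremally rather than algorithmically.
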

\begin{proof}
	For (i), assume for contradiction that there exists a closed walk $(u_1,\ldots,u_t)$ and choose such a walk with $t$ as small as possible.  Note that $u_i\ne u_j$ for all $1\le i<j<t$, as otherwise $(u_i,\ldots,u_j)$ would be a shorter closed walk.  But this means $(u_1,\ldots,u_t)$ is a directed cycle, a contradiction.
	
	For (ii), let $u$ be an arbitrary vertex.  Let $u_1=u$, and iteratively as long as $N^+(u_i)\ne \emptyset$ we define $u_{i+1}$ to be an arbitrary vertex of $N^+(u_i)$.  If this process terminates at some $u_t$ with $N^+(u_t)=\emptyset$, then by definition $u_t$ is a sink and $(u_1,\ldots,u_t)$ is a directed walk from $u$ to this sink.  Otherwise, because $\tour$ is a finite digraph there must exist some indices $i<j$ in the infinite sequence $u_1,u_2,\ldots$ with $u_i=u_j$.  This means that $(u_i,u_{i+1},\ldots,u_j)$ is a closed walk of $\tour$, a contradiction to (i).  We conclude that $u$ has a walk to a sink.  The proof of (iii) is entirely analogous but with $N^+$ replaced by $N^-$; we omit the details.
\end{proof}
\begin{lem}\label{prefix path}
	If $\tour$ is a single-elimination tournament and if $(u_1,\ldots,u_s)$ and $(u_1',\ldots,u_t')$ are two directed walks from the same vertex $u=u_1=u_1'$, then one of these walks is a prefix for the other, i.e.\ either $(u_1,\ldots,u_s)=(u'_1,\ldots,u'_s)$ or $(u'_1,\ldots,u'_t)=(u_1,\ldots,u_t)$.
\end{lem}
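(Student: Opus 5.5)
The proof will be a direct induction that uses only property (b) of the definition, namely that every non-sink vertex has exactly one out-neighbor (and the sink has none). The key observation is that a directed walk starting at $u$ is completely forced: once $u_i$ is known, $u_{i+1}$ must lie in $N^+(u_i)$, and this set has at most one element. So any two walks from the same vertex must agree step by step for as long as both are defined.

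Concretely, assume without loss of generality that $s\le t$. We show by induction on $i$ that $u_i=u_i'$ for all $1\le i\le s$. The base case $i=1$ holds by hypothesis, since $u_1=u_1'=u$. For the inductive step, suppose $u_i=u_i'$ for some $i<s$. Then $u_{i+1}\in N^+(u_i)$, so $N^+(u_i)$ is nonempty. Hence $u_i$ is not a sink, and property (b) gives $|N^+(u_i)|=1$. Since $i<s\le t$, the vertex $u'_{i+1}$ exists and lies in $N^+(u_i')=N^+(u_i)$. Both $u_{i+1}$ and $u'_{i+1}$ therefore lie in a one-element set, so they are equal. This completes the induction and shows $(u_1,\ldots,u_s)=(u'_1,\ldots,u'_s)$. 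In the case $t\le s$, the symmetric argument gives the other conclusion.

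There is no real obstacle here. The only point needing care is to justify that $u_i$ is not a sink before applying property (b), and this follows because $u_i$ has a successor in the walk. Properties (a), (c) and (d) are not needed for this lemma.
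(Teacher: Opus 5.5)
Your proof is correct and takes essentially the same route as the paper: since every vertex has at most one out-neighbor, two walks from the same vertex must agree for as long as both are defined. The paper argues via the largest index of agreement and a contradiction, whereas your forward induction explicitly establishes agreement at every index $j\le\min\{s,t\}$, which the paper's phrasing (the largest $i$ with $u_i=u'_i$) leaves implicit.
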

\begin{proof}
	Let $i$ be the largest integer such that $u_i=u'_i$, noting that such a (largest) integer must exist since $u_1=u'_1$.  If $i<\min\{s,t\}$ then by definition $u_{i+1},u'_{i+1}$ are two distinct vertices which are in $N^+(u_i)=N^+(u'_i)$, a contradiction to single-elimination tournaments having at most one out-neighbor per vertex.  We thus must have, say, $i=s\le t$ which means that $(u_1,\ldots,u_s)=(u'_1,\ldots,u'_s)$, proving the result.
\end{proof}

We can now state our main properties of the player sets $P(u)$, which we recall are defined to be the set of players $a$ which have a directed walk to $u$.
\begin{prop}\label{Pu Sets}
	Let $\tour$ be a single-elimination tournament.
	\begin{itemize}
		\item[(i)] Every $u\in V(\tour)$ has $P(u)\ne \emptyset$.
		\item[(ii)] For all $u,v\in V(\tour)$ we have $P(u)= P(v)$ if and only if $u= v$
		\item[(iii)] We have $P(u)\sub P(v)$ for $u,v\in V(\tour)$ if and only if there exists a directed walk from $u$ to $v$.
		\item[(iv)] For all distinct $u,v\in V(\tour)$ we either have $P(u)\cap P(v)=\emptyset,\ P(u)\subsetneq P(v)$, or $P(v)\subsetneq P(u)$.
		\item[(v)] For every match $x\in M(\tour)$ the sets $\{P(u):u\in N^-(x)\}$ partition $P(x)$ and satisfy that $P(u)\subsetneq P(x)$ and $P(u)\ne \emptyset$ for all $u\in N^-(x)$.
	\end{itemize}
\end{prop}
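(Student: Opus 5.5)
We prove the five parts in the order (i), (iii), (ii), (iv), (v), using Lemma~\ref{sources and sinks} and Lemma~\ref{prefix path} throughout, together with the fact that walks can be concatenated.

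For (i), Lemma~\ref{sources and sinks}(iii) gives a directed walk from some source $a$ to $u$. By definition $a$ is a player, so $a\in P(u)$.

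For (iii), the backward direction is easy: if there is a walk from $u$ to $v$ and $a\in P(u)$, then concatenating a walk from $a$ to $u$ with the walk from $u$ to $v$ gives a walk from $a$ to $v$, so $a\in P(v)$. For the forward direction, suppose $P(u)\sub P(v)$. By (i) we may pick $a\in P(u)$, which gives a walk $W$ from $a$ to $u$ and a walk $W'$ from $a$ to $v$. By Lemma~\ref{prefix path}, one of these walks is a prefix of the other.
\begin{itemize}
\item If $W$ is a prefix of $W'$, then the tail of $W'$ is a walk from $u$ to $v$ and we are done.
\item Otherwise $W'$ is a prefix of $W$, so there is a walk from $v$ to $u$. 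By the backward direction this gives $P(v)\sub P(u)$, hence $P(u)=P(v)$.
\end{itemize}
The second case is where the main care is needed: we must conclude $u=v$ there, since then the trivial walk $(u)$ finishes the argument. This equality is exactly (ii), which we prove next.

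For (ii), suppose $P(u)=P(v)$ and pick $a\in P(u)$. Lemma~\ref{prefix path} applied to walks from $a$ to $u$ and from $a$ to $v$ yields, without loss of generality, a walk from $u$ to $v$. If $u\ne v$, this walk has length at least one, so it passes through the out-neighbour $y$ of $u$. Since $u\ne v$, the vertex $v$ is not a source, so $v$ is a match and $|N^-(v)|\ge 2$ by property (d). The walk from $u$ reaches $v$ through a unique in-neighbour $w\in N^-(v)$. Take another in-neighbour $w'\ne w$ and a player $b\in P(w')$, which exists by (i). Then $b\in P(v)=P(u)$, so there is a walk from $b$ to $u$, and extending it gives a walk from $b$ to $v$ through $w$. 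There is also a walk from $b$ to $v$ through $w'$. By Lemma~\ref{prefix path} one of these walks is a prefix of the other. Their last vertices are both $v$, and if they had different lengths the longer one would revisit $v$, contradicting Lemma~\ref{sources and sinks}(i). So they have equal length and therefore coincide, which forces $w=w'$, a contradiction. Hence $u=v$. This argument does not use (iii), so there is no circularity.

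For (iv), if $P(u)\cap P(v)$ contains some $a$, then Lemma~\ref{prefix path} applied to walks from $a$ to $u$ and from $a$ to $v$ gives a walk between $u$ and $v$ in one direction. By (iii) this gives containment, and by (ii) the containment is strict since $u\ne v$.

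For (v), let $x$ be a match. Each $u\in N^-(x)$ has a walk to $x$, so $P(u)\sub P(x)$ by (iii). The containment is strict by (ii), and $P(u)\ne\emptyset$ by (i). For coverage, take $a\in P(x)$; since $a\ne x$, a walk from $a$ to $x$ has a second-to-last vertex $u\in N^-(x)$, so $a\in P(u)$. For disjointness, if $a\in P(u)\cap P(u')$ with $u\ne u'$ both in $N^-(x)$, then extending walks from $a$ to $u$ and from $a$ to $u'$ by one step each gives two walks from $a$ to $x$. The same equal-length argument as in (ii) shows these walks coincide, which forces $u=u'$, a contradiction.
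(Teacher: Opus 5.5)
Your proof is correct, but it is organized differently from the paper's. The paper's order is as follows:
\begin{itemize}
\item It proves (i), then (v) first, using only the easy direction of (iii) as a claim.
\item In (v), disjointness comes from Lemma~\ref{prefix path} plus a closed-walk contradiction. Strictness comes from the existence of a second in-neighbour whose player set is nonempty and disjoint.
\item It then derives (ii) from the strict chain $P(v)\subsetneq P(u_{i+1})\subsetneq\cdots\subsetneq P(u)$ along a walk, and only after that proves (iii) and (iv).
\end{itemize}
You instead prove (ii) directly, by a local argument at the endpoint $v$. A player $b$ from a second in-neighbour $w'$ lies in $P(v)=P(u)$, so it reaches $v$ both through $w$ and through $w'$. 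This contradicts the fact that two walks with the same start and end must coincide, which is your ``equal length'' observation via Lemma~\ref{prefix path} and Lemma~\ref{sources and sinks}(i). Then (v) becomes a near-corollary of (ii) and (iii).

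What each route buys:
\begin{itemize}
\item The paper's order does the walk-uniqueness argument once and lets (v) be the workhorse for everything after.
\item Yours makes (ii) self-contained and uses property (d) at a single vertex. The cost is repeating the equal-length argument in both (ii) and (v); you could factor it out as one lemma stating that any two walks with the same endpoints are equal.
\end{itemize}

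One small point to make explicit: in (v), deducing $P(u)\subsetneq P(x)$ from (ii) needs $u\neq x$ for $u\in N^-(x)$. This holds because a loop would be a closed walk, which Lemma~\ref{sources and sinks}(i) excludes.
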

We note that (ii)--(iv) imply that one can define a partial order on $V(\tour)$ by having $u<v$ whenever $P(u)\subsetneq P(v)$.
\begin{proof}
	For (i), we have by Lemma~\ref{sources and sinks} that each $u$ has a directed walk from a source, i.e.\ from a player, which by definition means this player belongs to $P(u)$.
	
	We next prove (v), for which we will need to prove the easy direction of (iii).
	\begin{claim}\label{Claim for Pu}
		If $u,v\in V(\tour)$ are such that there exists a directed walk from $u$ to $v$, then $P(u)\sub P(v)$.
	\end{claim}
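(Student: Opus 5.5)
The plan is to concatenate walks. Suppose $(u_1,\ldots,u_s)$ is a directed walk from $u=u_1$ to $v=u_s$, and let $a\in P(u)$ be any player. I will show $a\in P(v)$.

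By the definition of $P(u)$ (Definition~\ref{Definition Pu}), there is a directed walk $(a=w_1,\ldots,w_t=u)$ from $a$ to $u$. Since $w_t=u=u_1$, the sequence
\[(w_1,\ldots,w_t,u_2,\ldots,u_s)\]
is again a directed walk. To check this, note that each consecutive pair inside either piece satisfies the out-neighbor condition by hypothesis. The junction pair $(w_t,u_2)$ satisfies $u_2\in N^+(u_1)=N^+(w_t)$; if $s=1$ there is no junction and the walk is simply $(w_1,\ldots,w_t)$ with $w_t=v$.

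This walk starts at the player $a$ and ends at $v$, so by definition $a\in P(v)$. Since $a$ was arbitrary, $P(u)\sub P(v)$.

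The only care needed is that the definition uses walks rather than paths, so the concatenated sequence need not consist of distinct vertices. Because of this, no repetition argument or appeal to Lemma~\ref{sources and sinks}(i) is required. I do not expect any real obstacle.
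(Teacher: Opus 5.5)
Your proof is correct and is essentially the paper's own argument: concatenate a walk from $a\in P(u)$ to $u$ with the given walk from $u$ to $v$ to obtain a walk from $a$ to $v$. Your explicit checks of the junction step and of the trivial case $s=1$ are extra care that the paper leaves implicit.
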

	\begin{proof}
		Let $(u_1,\ldots,u_t)$ be a directed walk from $u$ to $v$.  By definition, for each $a\in P(u)$ there exists a directed walk $(u'_1,\ldots,u'_s)$ from $a$ to $u$.  But then $(u'_1,\ldots,u'_s=u_1,u_2,\ldots,u_t)$ is a directed walk from $a$ to $v$, implying $a\in P(v)$ and hence $P(u)\sub P(v)$ since $a\in P(u)$ was arbitrary. 
	\end{proof}
	From this claim, we immediately have that $\bigcup_{u\in N^-(x)} P(u)\sub P(x)$ since each $u\in N^-(x)$ has a directed walk to $x$, namely $(u,x)$.  On the other hand, for each $a\in P(x)$ there exists a directed walk $(u_1,\ldots,u_t)$.  This means there exists a directed walk from $a$ to an in-neighbor of $x$, namely $u_{t-1}\in N^-(x)$, meaning that $a\in P(u)$ for some $u\in N^-(x)$ and hence $P(x)\sub \bigcup_{u\in N^-(x)} P(u)$, proving that these sets are equal.  Now assume for contradiction that there exist distinct $u,u'\in N^-(x)$ with  $P(u)\cap P(u')\ne \emptyset$, say with $a\in P(u)\cap P(u')$.  This implies that there exists a directed walk $(u_1,\ldots,u_s)$ from $a$ to $u$ and a directed walk $(u'_1,\ldots,u'_t)$ from $a$ to $u'$.  By Lemma~\ref{prefix path} we must have, say, $(u_1,\ldots,u_s)=(u'_1,\ldots,u'_s)$ as well as $s<t$ since $u'_s=u_s=u\ne u'=u'_t$.  This in turn means that we must have $u'_{s+1}= x$, as otherwise $x,u'_{s+1}$ would be two distinct vertices in $N^+(u'_s)=N^+(u_s)=N^+(u)$.  This implies that $(u'_{s+1},u'_{s+2},\ldots,u'_t,x)$ is a closed walk in $\tour$, a contradiction to Lemma~\ref{sources and sinks}.  We conclude that the $P(u)$ sets partition $P(x)$.   The fact that these sets are non-empty follows from (i).  To show $P(u)\subsetneq P(x)$ for any $u\in N^-(x)$, we use that there exists some $u'\in N^-(x)\sm \{u\}$ by definition of single-elimination tournaments.  Then by what we have proven up to this point, we find $P(u)\sub P(x)\sm P(u')\subsetneq P(x)$, proving the result.
	
	We now prove (ii).  Certainly $P(u)=P(v)$ whenever $u=v$.  Assume now that $P(u)=P(v)$.  By (i) there is some $a\in P(u)\cap P(v)$, and again by Lemma~\ref{prefix path} this means that there exists some directed walk $(u_1,\ldots,u_t)$ from $a$ to either $u$ or $v$ with both $u,v$ appearing somewhere in this walk.  Without loss of generality assume $u_t=u$.  If $v=u$ then we are done, otherwise $v=u_i$ for some $i<t$.  By (v) it follows that $P(v)\subsetneq P(u_{i+1})\subsetneq P(u_{i+2})\cdots \subsetneq P(u_t)=P(u)$, a contradiction to our assumption of $P(u)=P(v)$.
	
	For (iii), the first half follows from Claim~\ref{Claim for Pu}, so assume now that $P(u)\sub P(v)$.  The result is trivial if $u=v$ so we may assume $u\ne v$ and hence that $P(u)\subsetneq P(v)$ by (ii).  By (i), there exists some $a\in P(u)=P(u)\cap P(v)$.  By Lemma~\ref{prefix path} this means there exists some directed walk $(u_1,\ldots,u_t)$ which has $u_t\in \{u,v\}$ and $u_i\in \{u,v\}$ for some $i<t$, where here we used that $u\ne v$ to conclude $i<t$.  If $u_t=u$ then this means $(u_i,\ldots,u_t)$ is a directed walk from $v$ to $u$, meaning that $P(v)\sub P(u)$ by Claim~\ref{Claim for Pu}, a contradiction to $P(u)\subsetneq P(v)$.  Thus we must have $u_t=v$, meaning that $(u_i,\ldots,u_t)$ is a directed walk from $u$ to $v$ as desired.
	
	For (iv), if $P(u)\cap P(v)=\emptyset$ then there is nothing to prove, so assume there exists $a\in P(u)\cap P(v)$.  By Lemma~\ref{prefix path} there must exist a directed walk $(u_1,\ldots,u_t)$ from $a$ to one of $u,v$ with the other vertex appearing somewhere in this walk, say without loss of generality that $u_t=u$ and $u_i=v$, noting that $i<t$ by assumption of $u,v$ being distinct.  This implies that $P(u)\sub P(v)$ by (iii) and this inclusion must be strict by (ii), proving the result.
\end{proof}

\textbf{Brackets}.   We now prove lemmas about brackets, starting with some basic observations.

\begin{lem}\label{bracket facts}
	Let $\tour$ be a single-elimination tournament and $B$ a bracket.
	\begin{itemize}
		\item[(i)] We have $B(u)\in P(u)$ for all $u \in V(\tour)$.
		\item[(ii)] For each $v\in V(\tour)$, we have $B(u)=B(v)$ for every $u\in V(\tour)$ with $P(u)\sub P(v)$ and $B(v)\in P(u)$.
	\end{itemize}
\end{lem}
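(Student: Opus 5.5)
Part (i) comes first, and I will prove it by strong induction on $|P(u)|$, or equivalently on the partial order induced by Proposition~\ref{Pu Sets}. If $u$ is a player, then $B(u)=u$ and $P(u)=\{u\}$, so the claim holds. If $u$ is a match, property (b) of brackets gives $B(u)=B(w)$ for some $w\in N^-(u)$. By Proposition~\ref{Pu Sets}(v) we have $P(w)\subsetneq P(u)$, so the induction hypothesis applies to $w$ and gives $B(u)=B(w)\in P(w)\subseteq P(u)$. The induction is well founded because $|P(w)|<|P(u)|$.

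For (ii), fix $v$ and induct downward from $v$. The claim is that $B(u)=B(v)$ whenever $P(u)\subseteq P(v)$ and $B(v)\in P(u)$. Set $a=B(v)$, and argue by strong induction on $|P(v)|-|P(u)|$. If $u=v$ there is nothing to prove. Otherwise $P(u)\subsetneq P(v)$, and by Proposition~\ref{Pu Sets}(iii) there is a directed walk from $u$ to $v$. Let $u'$ be the successor of $u$ on this walk, so $u\in N^-(u')$ and $P(u)\subsetneq P(u')\subseteq P(v)$ by Claim~\ref{Claim for Pu} and Proposition~\ref{Pu Sets}(v). Since $a\in P(u)\subseteq P(u')$, the induction hypothesis gives $B(u')=a$.

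It remains to show $B(u)=a$. By property (b), $a=B(u')=B(w)$ for some $w\in N^-(u')$. Part (i) gives $a=B(w)\in P(w)$. The sets $\{P(w):w\in N^-(u')\}$ are disjoint by Proposition~\ref{Pu Sets}(v), and $a\in P(u)$, so $w=u$. Hence $B(u)=a=B(v)$.

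The main obstacle is setting up the induction in (ii) correctly. The cleanest option is to induct on the length of the walk from $u$ to $v$, which is well defined up to uniqueness by Lemma~\ref{prefix path}; I would use the walk length rather than cardinalities to avoid subtlety. The disjointness step that forces $w=u$ is the key point, and it is exactly what Proposition~\ref{Pu Sets}(v) provides.
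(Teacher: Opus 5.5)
Your proof is correct and essentially matches the paper's: (i) is the same induction on $|P(u)|$, and (ii) is the paper's maximal-$|P(u)|$ counterexample argument recast as induction on $|P(v)|-|P(u)|$. In both, you step to the successor of $u$ on the walk to $v$ and use the disjointness in Proposition~\ref{Pu Sets}(v) to force the winning in-neighbor to be $u$.
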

\begin{proof}
	For (i), assume this was false for some $u$ and choose such a $u$ with $|P(u)|$ as small as possible.  If $u$ is a source then $B(u)=u\in P(u)$ by definition of brackets, so we can assume $u$ is not a source.  Note that $P(u')\subsetneq P(u)$ for all $u'\in N^-(u)$ by Proposition~\ref{Pu Sets}(v), so by definition of brackets and the minimality of $u$ we have that
	\[B(u)\in \{B(u'):u'\in N^-(u)\}\sub \bigcup_{u'\in N^-(u)} P(u')=P(u),\]
	proving the result.
	
	For (ii), assume for contradiction this was false for some $u,v$ and choose such a $u$ with $|P(u)|$ as large as possible.  Because $P(u)\sub P(v)$, by Proposition~\ref{Pu Sets}(iii) there exists some directed walk $(u_1,\ldots,u_t)$ from $u$ to $v$ with $t\ge 2$ since the statement is vacuously true for $u=v$.  Because $u_2$ also has a directed walk to $v$ we have $P(u_2)\sub P(v)$ and also $P(u)\subsetneq P(u_2)$ by Proposition~\ref{Pu Sets}(iii) and (v), so by the maximality of $u$ we have $B(u_2)=B(v):=a$.  By (i) of this lemma and the fact that $B(u_2)\in \{B(u'):u'\in N^-(u_2)\}$ for brackets implies that $B(u_2)=B(u')=a$ for some $u'\in N^-(u_2)$ with $a\in P(u')$.  But $a\in P(u)$ by hypothesis, so Proposition~\ref{Pu Sets}(v) implies that we must have $u'=u$ and hence that $B(u)=a$, a contradiction.
\end{proof}

We end with a lemma which allows us to modify brackets in an appropriate way.
\begin{lem}\label{bracket modification}
	Let $\tour$ be a single-elimination tournament.  If $\widehat{B}$ is a bracket and $a\in P(\tour)$, then the function $\widehat{B}_a:V(\tour)\to P(\tour)$ defined by  $\widehat{B}_a(u)=a$ whenever $a\in P(u)$ and $\widehat{B}_a(u)=\widehat{B}(u)$ otherwise is a bracket.
\end{lem}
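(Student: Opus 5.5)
We verify the two defining conditions of a bracket for $\widehat{B}_a$ directly, using Proposition~\ref{Pu Sets} for the structure of player sets.

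\textbf{Condition (a).} Let $b$ be a player. Then $P(b)=\{b\}$, since a player is a source and so has no non-trivial walks into it.

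If $b=a$, then $a\in P(b)$, so $\widehat{B}_a(b)=a=b$. If $b\ne a$, then $a\notin P(b)$, so $\widehat{B}_a(b)=\widehat{B}(b)=b$, because $\widehat{B}$ is a bracket. Either way condition (a) holds.

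\textbf{Condition (b).} Fix a match $x$. We must show $\widehat{B}_a(x)\in\{\widehat{B}_a(u):u\in N^-(x)\}$, and we split into two cases.

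\emph{Case 1: $a\in P(x)$.} Then $\widehat{B}_a(x)=a$. By Proposition~\ref{Pu Sets}(v), the sets $P(u)$ for $u\in N^-(x)$ partition $P(x)$, so some $u\in N^-(x)$ has $a\in P(u)$. For that $u$ we get $\widehat{B}_a(u)=a$, as required.

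\emph{Case 2: $a\notin P(x)$.} Then $\widehat{B}_a(x)=\widehat{B}(x)$. Again by Proposition~\ref{Pu Sets}(v), every $u\in N^-(x)$ satisfies $P(u)\subseteq P(x)$, so $a\notin P(u)$. Hence $\widehat{B}_a(u)=\widehat{B}(u)$ for all $u\in N^-(x)$. The condition for $\widehat{B}_a$ at $x$ is therefore exactly the condition for $\widehat{B}$ at $x$, which holds because $\widehat{B}$ is a bracket.

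No step here is a real obstacle. The only point needing care is Case 1, which requires that $a$ lies in the player set of some in-neighbor of $x$; Proposition~\ref{Pu Sets}(v) supplies exactly this.
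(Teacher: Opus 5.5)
Your proposal is correct and follows essentially the same route as the paper: you check that players are fixed using $P(b)=\{b\}$, then verify the match condition by splitting on whether $a\in P(x)$, with Proposition~\ref{Pu Sets}(v) supplying the needed in-neighbor in one case and showing $\widehat{B}_a$ agrees with $\widehat{B}$ on $N^-(x)$ in the other. The paper's proof handles the two cases in the opposite order, which changes nothing.
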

That is, if one takes a bracket and changes it so that a given player wins all of their matches, then this is still a bracket.
\begin{proof}
	Observe that $\widehat{B}_a(a)=a$ and $\widehat{B}_a(b)=\widehat{B}(b)=b$ for any other $b\in P(\tour)$ because $\widehat{B}$ is a bracket and $P(b)=\{b\}$ for players $b$.  It thus remains to check that for every match $x$, we have $\widehat{B}_a(x)\in \{\widehat{B}_a(u):u\in N^-(x)\}$.  If $a\notin P(x)$ then $a\notin P(u)$ for any $u\in N^-(x)$ by Proposition~\ref{Pu Sets}(v), and hence
	\[\widehat{B}_a(x)=\widehat{B}(x)\in \{\widehat{B}(u):u\in N^-(x)\}=\{\widehat{B}_a(u):u\in N^-(x)\}.\]
	Otherwise if $a\in P(x)$ then there exists some $u'\in N^-(x)$ with $a\in P(u')$, so
	\[\widehat{B}_a(x)=a=\widehat{B}_a(u')\in \{\widehat{B}_a(u):u\in N^-(x)\}.\]
	We conclude that $\widehat{B}_a$ is a bracket.
\end{proof}
We emphasize that throughout this paper, whenever we use the notation $\widehat{B}$ we do so with the intention of eventually applying Lemma~\ref{bracket modification} to this bracket.

\section{Lower Bounds}\label{sec:lower}

To prove our general lower bounds we need to show the existence of some special matches.  Given two distinct players $a,b$, we intuitively want to define a match $x_{a,b}$ to be the unique match of $\tour$ where $a$ and $b$ face off against each other.  For this we need the following technical statement.

\begin{lem}\label{matches have walks}
	Let $\tour$ be a single-elimination tournament, $a,b$ distinct players, and $u,x\in V(\tour)$.  If $a,b\in P(u)\cap P(x)$ and if there exist $u_a,u_b\in N^-(x)$ with $ P(u_a)\cap \{a,b\}=\{a\}$ and $P(u_b)\cap \{a,b\}=\{b\}$, then there exists a directed walk from $x$ to $u$. 
\end{lem}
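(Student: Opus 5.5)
We want to show $P(x)\sub P(u)$ and then apply Proposition~\ref{Pu Sets}(iii). Since $a\in P(u)\cap P(x)$, this intersection is nonempty, so Proposition~\ref{Pu Sets}(iv) leaves three possibilities: $u=x$, $P(x)\subsetneq P(u)$, or $P(u)\subsetneq P(x)$. (By Proposition~\ref{Pu Sets}(ii), equality of player sets forces $u=x$.) If $u=x$, the trivial walk $(x)$ works. If $P(x)\subsetneq P(u)$, Proposition~\ref{Pu Sets}(iii) directly gives a directed walk from $x$ to $u$. So the whole task is to rule out the third case $P(u)\subsetneq P(x)$.

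Suppose $P(u)\subsetneq P(x)$. By Proposition~\ref{Pu Sets}(iii) there is a directed walk $(u_1,\ldots,u_t)$ from $u$ to $x$ with $t\ge 2$, since $u\ne x$. Its penultimate vertex $w=u_{t-1}$ lies in $N^-(x)$. The initial segment $(u_1,\ldots,u_{t-1})$ is a walk from $u$ to $w$, so Claim~\ref{Claim for Pu} (equivalently (iii)) gives $P(u)\sub P(w)$. Hence $a,b\in P(w)$.

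By Proposition~\ref{Pu Sets}(v), the sets $P(v)$ for $v\in N^-(x)$ are pairwise disjoint. Since $a\in P(w)\cap P(u_a)$, we must have $w=u_a$. But then $b\in P(u_a)$, contradicting $P(u_a)\cap\{a,b\}=\{a\}$. (One could argue symmetrically with $u_b$; one of the two suffices.) This excludes the third case, so the walk from $x$ to $u$ exists.

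I expect no real obstacle. The only care needed is in the case split: handle $u=x$ separately, and correctly extract an in-neighbour of $x$ from the walk, which requires $t\ge2$.
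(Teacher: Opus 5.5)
Your proposal is correct and follows essentially the same argument as the paper. Both use Proposition~\ref{Pu Sets}(iv) to reduce to ruling out $P(u)\subsetneq P(x)$, take the penultimate vertex of the walk from $u$ to $x$, and use the disjointness in Proposition~\ref{Pu Sets}(v) to contradict the existence of $u_a,u_b$.
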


\begin{proof}
	If $u=x$ then there is nothing to prove, so assume this is not the case.  By Proposition~\ref{Pu Sets}(iv) we must have either $P(u)\subsetneq P(x)$ or $P(x)\subsetneq P(u)$.  This latter case implies the result by Proposition~\ref{Pu Sets}(iii), so we may assume  $P(u)\subsetneq P(x)$ which by Proposition~\ref{Pu Sets}(iii) implies there exists a directed walk $(u_1,\ldots,u_t)$ from $u$ to $x$ with $t\ge 2$ since $u\ne x$.  Because $(u_1,\ldots,u_{t-1})$ is a directed walk from $u$ to $u_{t-1}$ we have $a,b\in P(u_{t-1})$ by Proposition~\ref{Pu Sets}(iii). By Proposition~\ref{Pu Sets}(v) $u_{t-1}\in N^-(x)$ must be the only in-neighbor of $x$ with $a$ or $b$ in its player set, contradicting the existence of the vertices $u_a,u_b$.
\end{proof}

We now prove the existence of the matches $x_{a,b}$ hinted at above.
\begin{lem}\label{matches exist}
	For any single-elimination tournament $\tour$ and any two distinct players $a,b$, there exists a unique match $x_{a,b}$ which has in-neighbors $u_a,u_b\in N^-(x)$ satisfying $P(u_a)\cap \{a,b\}=\{a\}$ and $P(u_b)\cap \{a,b\}=\{b\}$.
\end{lem}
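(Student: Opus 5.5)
Existence and uniqueness are handled separately. Existence comes from choosing a minimal vertex whose player set contains both $a$ and $b$; uniqueness comes from Lemma~\ref{matches have walks} applied in both directions together with acyclicity.

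\textbf{Existence.} Let $z$ be the sink. By Lemma~\ref{sources and sinks}(ii), every vertex has a walk to a sink, and the sink is unique. Hence $a,b\in P(z)$, so the family $\c{U}=\{u\in V(\tour): a,b\in P(u)\}$ is nonempty. Choose $x\in\c{U}$ with $|P(x)|$ minimal. Then $x$ is not a player, because $|P(x)|\ge 2$ while players have $P(a)=\{a\}$; thus $x$ is a match. By Proposition~\ref{Pu Sets}(v), the sets $P(u)$ for $u\in N^-(x)$ partition $P(x)$. So there are $u_a,u_b\in N^-(x)$ with $a\in P(u_a)$ and $b\in P(u_b)$. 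If $u_a=u_b$, this vertex would lie in $\c{U}$ and satisfy $|P(u_a)|<|P(x)|$ by the strict inclusion in (v), contradicting minimality. Hence $u_a\ne u_b$. Disjointness of the partition then gives $b\notin P(u_a)$ and $a\notin P(u_b)$, so $P(u_a)\cap\{a,b\}=\{a\}$ and $P(u_b)\cap\{a,b\}=\{b\}$, as required.

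\textbf{Uniqueness.} Suppose $x$ and $x'$ both have the stated property. Each has $a,b$ in its player set, since $P(u_a)\cup P(u_b)\sub P(x)$ by Proposition~\ref{Pu Sets}(v), and likewise for $x'$. Applying Lemma~\ref{matches have walks} with $u=x'$ (using the in-neighbors of $x$) gives a directed walk from $x$ to $x'$. Applying it symmetrically gives a walk from $x'$ to $x$. By Proposition~\ref{Pu Sets}(iii), $P(x)\sub P(x')\sub P(x)$, so $P(x)=P(x')$, and Proposition~\ref{Pu Sets}(ii) gives $x=x'$.

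The only step needing any care is confirming that the minimal $x$ is a match and that the two in-neighbors are distinct; both follow directly from Proposition~\ref{Pu Sets}(v).
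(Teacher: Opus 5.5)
Your proof is correct and takes the paper's approach: existence comes from a vertex of minimal $|P(x)|$ containing both $a$ and $b$ together with the strict-inclusion part of Proposition~\ref{Pu Sets}(v), and uniqueness comes from Lemma~\ref{matches have walks} applied in both directions. The differences are only cosmetic: you explicitly verify that the minimal vertex is a match, which the paper leaves implicit, and you close uniqueness with Proposition~\ref{Pu Sets}(iii) and (ii) rather than by exhibiting a closed walk.
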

\begin{proof}
	Define $x_{a,b}$ to be a vertex in $\tour$ with $a,b\in P(x_{a,b})$ which has $|P(x_{a,b})|$ as small as possible subject to this condition.  Note that such a vertex must exist since the sink $z$ satisfies $P(z)=P(\tour)$ due to Lemma~\ref{sources and sinks}(ii) and the fact that single-elimination tournaments have a unique sink.   By Proposition~\ref{Pu Sets}(v), for each player $c\in P(x_{a,b})$ there exists a unique in-neighbor $u_c$ with $c\in P(u_c)$.  If $u_a\ne u_b$ then we are done, otherwise we have $a,b\in P(u_a)$ and $P(u_a)\subsetneq P(x)$ by Proposition~\ref{Pu Sets}(v), a contradiction to our choice of $x_{a,b}$.  We conclude that $x_{a,b}$ is a vertex satisfying the conditions of the lemma.
	
	For uniqueness, assume that there existed two vertices $x_{a,b},x'_{a,b}$ satisfying the conditions of the lemma.  Note that we must have $a,b\in P(x_{a,b})\cap P(x_{a,b}')$ since, for example, $u_a\in N^-(x_{a,b})$ has $a\in P(u_a)\sub P(x_{a,b})$ by Proposition~\ref{Pu Sets}(v).  By Lemma~\ref{matches have walks}, there exists a directed walk $(u_1,\ldots,u_t)$ from $x_{a,b}$ to $x'_{a,b}$ as well as a directed walk $(u'_1,\ldots,u'_s)$ from $x'_{a,b}$ to $x_{a,b}$.  But this means that $(u_1,\ldots,u_t,u_1',\ldots,u'_s)$ is a closed walk, a contradiction to Lemma~\ref{sources and sinks}.

\end{proof}

With this in mind, all of our lower bounds can be stated in terms of the following simple but crucial fact about resolving sets.

\begin{prop}\label{necessary condition}
	Let $\tour$ be a single-elimination tournament. If $\c{B}$ is a set of brackets and if there exist two distinct players $a,b$ such that $B_i(x_{a,b})\notin \{a,b\}$ for every $B_i\in \c{B}$, then $\c{B}$ is not $\sig$-resolving for any scoring system $\sig$.
\end{prop}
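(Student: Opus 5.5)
We construct two distinct brackets $B,B'$ that differ only at the match $x:=x_{a,b}$, with $B(x)=a$ and $B'(x)=b$. For such a pair and any $B_i\in\c{B}$, the sums $\score_\sig(B_i,B)$ and $\score_\sig(B_i,B')$ have the same terms at every match other than $x$. At $x$, the term $\sig(x)$ appears in one sum only if $B_i(x)\in\{a,b\}$, and this is excluded by hypothesis. So the two scores agree for every $B_i$ and every $\sig$, and $\c{B}$ is not $\sig$-resolving. Everything therefore reduces to constructing $B$ and $B'$.

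To construct them, let $u_a,u_b\in N^-(x)$ be the in-neighbors from Lemma~\ref{matches exist}. Take any bracket $\widehat{B}$; one exists, for instance by repeatedly choosing, in order of increasing $|P(u)|$, $B(u)$ to be the winner at some in-neighbor. Apply Lemma~\ref{bracket modification} twice: set $B=(\widehat{B}_b)_a$ and $B'=(\widehat{B}_a)_b$.

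We compare $B$ and $B'$ at a vertex $u$ by cases on $P(u)$.
\begin{itemize}
\item If $a,b\notin P(u)$, both brackets equal $\widehat{B}(u)$.
\item If exactly one of $a,b$ lies in $P(u)$, say $a$, then both brackets give $a$. For $B'$ this holds because the second modification (by $b$) leaves $u$ untouched.
\item If $a,b\in P(u)$, then Lemma~\ref{matches have walks} gives a directed walk from $x$ to $u$. When $u\ne x$, the walk passes through the out-neighbor of $x$. In that case $B(u)=a$ and $B'(u)=b$, which is not what we want.
\end{itemize}

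The last case is the main obstacle: the naive modification differs on the whole path above $x$, not just at $x$. To fix it, we override on that path. Let $c$ be any fixed player, say $c=a$. Define $B'$ to agree with $B$ everywhere except at $x$, where $B'(x)=b$. We then need only check the bracket condition (b) for $B'$ at $x$ and at its out-neighbor $y$ (if $x\neq z$).
\begin{itemize}
\item At $x$: $B'(u_b)=B(u_b)=b$, since $b\in P(u_b)$ and $B$ was built with $b$ winning everything available below $x$. Concretely, $a\notin P(u_b)$, so the outer modification by $a$ left $u_b$ alone, where the inner modification had set it to $b$.
\item At $y$: $B(y)=a=B(x)$ and $B'(x)=b$. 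Condition (b) requires $B'(y)\in\{B'(w):w\in N^-(y)\}$, and $a$ no longer appears at $x$ while every other in-neighbor $w$ of $y$ has $a\notin P(w)$. So this fails.
\end{itemize}

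We repair the failure by choosing $\widehat{B}$ so that above $x$ the winner comes from elsewhere. Since $P(y)\supsetneq P(x)$, pick an in-neighbor $w\ne x$ of $y$ and a player $c\in P(w)$. Form $B$ as $((\widehat{B}_b)_a)_c$: first $b$, then $a$, then $c$ winning everything it can. Then $c$ wins $y$ and all vertices above, while $x$ still has winner $a$, because $c\notin P(x)$. Now set $B'$ equal to $B$ except $B'(x)=b$.
\begin{itemize}
\item At $y$: $B'(y)=c=B'(w)$, so condition (b) holds.
\item At $x$: $B'(u_b)=b$ still holds, since neither $a$ nor $c$ lies in $P(u_b)$.
\end{itemize}
All other vertices are unaffected. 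So $B,B'$ are distinct brackets differing only at $x$. If $x=z$, no override above $x$ is needed and we take $B=(\widehat{B}_b)_a$, $B'=(\widehat{B}_a)_b$, which differ only at $z$. This completes the argument.
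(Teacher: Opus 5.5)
Your proposal is correct and is essentially the paper's proof. It uses the same reduction to two brackets that differ only at $x_{a,b}$, the same sink case $B=(\widehat{B}_b)_a$, $B'=(\widehat{B}_a)_b$, and the same fix of letting a player $c$ from another in-neighbor of $y$ win everything above $x$ via $B=((\widehat{B}_b)_a)_c$. The only difference is that you get $B'$ by changing $B$ at $x$ alone and checking condition (b) directly at $x$ and $y$, whereas the paper defines $B'=((\widehat{B}_a)_b)_c$ and uses Lemma~\ref{matches have walks} to show it agrees with $B$ off $x$; these are in fact the same bracket.
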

\begin{proof}
	The proof is based on an observation.
	\begin{claim}
		It suffices to prove that there exist brackets $B,B'$ with $B(u)=B'(u)$ for all $u\ne x_{a,b}$ and with $B(x_{a,b})=a$ and $B'(x_{a,b})=b$.
	\end{claim}
	\begin{proof}
		Say that such brackets $B,B'$ exist.  For every bracket $\widetilde{B}$, define $\al(\widetilde{B})$ to be the indicator function which is 1 if $\widetilde{B}(x_{a,b})=a$ and which is 0 otherwise.  Similarly define $\be(\widetilde{B})$ to be the indicator function for $\widetilde{B}(x_{a,b})=b$.  We then observe that
		\[\score_\sig(\widetilde{B},B)-\score_\sig(\widetilde{B},B')=\sum_{x:\widetilde{B}(x)=B(x)}\sig(x)-\sum_{x:\widetilde{B}(x)=B'(x)}\sig(x)=(\al(\widetilde{B})-\be(\widetilde{B}))\sig(x_{a,b}),\]
		since a term $\sig(x)$ with $x\ne x_{a,b}$ appears in one sum if and only if it appears in the other sum by assumption of $B(x)=B'(x)$.  In particular, we see that $\score_\sig(\widetilde{B},B)=\score_\sig(\widetilde{B},B')$ whenever $\al(\widetilde{B})=\be(\widetilde{B})=0$, i.e.\ whenever $\widetilde{B}(x_{a,b})\notin \{a,b\}$.  As $\al(\widetilde{B})=\be(\widetilde{B})=0$ holds for every bracket in $\c{B}$ by hypothesis, we conclude that $\c{B}$ is not a $\sig$-distinguishing set for any choice of $\sig$.
	\end{proof}
	
	To show that brackets as in the claim exist, we will start with an arbitrary bracket  $\widehat{B}$ and then modify it according to Lemma~\ref{bracket modification} in an appropriate way.  From now on we fix some arbitrary bracket $\widehat{B}$.
	
	First consider the case that $x_{a,b}$ is the sink of $\tour$.  Define $B:=(\widehat{B}_b)_a$ and $B':=(\widehat{B}_a)_b$; that is, $B$ is defined by changing $\widehat{B}$ so that $b$ wins every possible match and then changing $\widehat{B}_b$ so that $a$ wins every possible match, with $B'$ defined in the same way but with the order of the players reversed.  By construction $B,B'$ can only differ on $u$ with $a,b\in P(u)$.  By Lemma~\ref{matches have walks}, there must exist a directed walk from $x_{a,b}$ to any such $u$, but since $x_{a,b}$ is a sink this is only possible for $u=x_{a,b}$.  In this case we have $B(x_{a,b})=a$ and $B'(x_{a,b})=b$, so $B,B'$ satisfy the condition of the claim, proving the result
	
	Now consider the case that there exists some $y\in N^+(x_{a,b})$.  By definition of single-elimination tournaments, there exists some $v\in N^-(y)\sm \{x\}$ and we let $c\in P(v)$ be an arbitrary player which exists by Proposition~\ref{Pu Sets}(i).  We now define $B=((\widehat{B}_b)_a)_c$ and $B'=((\widehat{B}_a)_b)_c$.  Again the only way we could fail to have $B(u)=B'(u)$ is if $a,b\in P(u)$, but any such vertex is either equal to $x_{a,b}$ or can be reached from $y$ by a directed walk by Lemma~\ref{matches have walks}, and hence has $c\in P(u)$ and $B(u)=c=B'(u)$ by construction.  We also have $B(x_{a,b})=a$ and $B'(x_{a,b})=b$, proving the result.
\end{proof}

This result quickly gives a technical lower bound for $\res(\tour,\sig)$, which we recall is the smallest number $r$ such that every set of $r$ brackets is $\sig$-resolving.
\begin{prop}\label{resolving lower bound technical}
	Let $\tour$ be a single-elimination tournament on at least two players.  Let $R$ be a uniform random bracket of $\tour$ and define 
	\[q_{\mathrm{pair}}=\min_{a,b\in P(\tour),\ a\ne b} \Pr[R(x_{a,b})\in \{a,b\}].\]
	If $\tour$ has $N$ total brackets, then
	\[\res(\tour,\sig)>(1-q_{\mathrm{pair}})N.\] 
\end{prop}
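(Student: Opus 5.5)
We exhibit a large set of brackets that fails to be $\sig$-resolving, using Proposition~\ref{necessary condition}.

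Fix distinct players $a,b$ attaining the minimum in the definition of $q_{\mathrm{pair}}$. Let $\c{B}$ be the set of all brackets $B$ of $\tour$ with $B(x_{a,b})\notin\{a,b\}$. Since $R$ is uniform over all $N$ brackets, the number of brackets with $R(x_{a,b})\in\{a,b\}$ is exactly $q_{\mathrm{pair}}N$. Hence
\[|\c{B}|=(1-q_{\mathrm{pair}})N.\]
Every bracket in $\c{B}$ satisfies the hypothesis of Proposition~\ref{necessary condition} for this pair $a,b$. So $\c{B}$, and every subset of $\c{B}$, fails to be $\sig$-resolving for every scoring system $\sig$.

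It remains to turn this into a strict lower bound on $\res(\tour,\sig)$. By definition, $\res(\tour,\sig)$ is the least $r$ such that every set of $r$ brackets is $\sig$-resolving. We check that $\c{B}$ has at least one element, i.e.\ that $q_{\mathrm{pair}}<1$, or equivalently that some bracket avoids $\{a,b\}$ at $x_{a,b}$; this is a harmless degenerate case.

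For $r=|\c{B}|$, the set $\c{B}$ itself is a non-resolving set of size $r$. For any $r\le |\c{B}|$, any $r$-element subset of $\c{B}$ is also non-resolving. So no $r\le(1-q_{\mathrm{pair}})N$ has the defining property, and therefore $\res(\tour,\sig)>(1-q_{\mathrm{pair}})N$.

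The only subtle point is the degenerate case $\c{B}=\emptyset$, where $q_{\mathrm{pair}}=1$ and the bound reads $\res>0$. Here we note that the empty set of brackets is not resolving, because $\tour$ has at least two players and hence at least two distinct brackets (Lemma~\ref{bracket modification} applied with two different players gives distinct brackets). Therefore $\res(\tour,\sig)\ge 1>0$ in this case as well. Everything else is a direct application of Proposition~\ref{necessary condition}.
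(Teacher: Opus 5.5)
Your proof is correct and follows the paper's argument: take the brackets with $B(x_{a,b})\notin\{a,b\}$ for a minimizing pair $a,b$, apply Proposition~\ref{necessary condition} to show this set and all its subsets are non-resolving, and count them. Your separate treatment of the case $q_{\mathrm{pair}}=1$ is fine but unnecessary, since Proposition~\ref{necessary condition} applies vacuously to the empty set: its proof still produces two distinct brackets that no bracket in the set distinguishes.
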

\begin{proof}
	For two distinct players $a,b$, let $\c{B}_{a,b}$ be the set of brackets $B$ with $B(x_{a,b})\notin \{a,b\}$.   By Proposition~\ref{necessary condition}, each of the sets $\c{B}_{a,b}$ fails to be a resolving set of $\tour$ for any choice of $\sig$, and hence
	\[\res(\tour,\sig)>\max_{a,b} |\c{B}_{a,b}|=\max_{a,b} N-\Pr[R(x_{a,b})\in \{a,b\}] N=(1-q_{\mathrm{pair}})N,\]
	where the first equality used that the number of brackets with $B(x_{a,b})\in \{a,b\}$ is exactly $\Pr[R(x_{a,b})\in \{a,b\}] N$ since $R$ is uniformly random.
\end{proof}
This technical bound turns out to imply our simpler lower for $\res(\tour,\sig)$ given in Theorem~\ref{resolving with qmax alone}.

\begin{cor}\label{resolving qpair and qmax}
	Let $\tour$ be a single-elimination tournament on at least two players with source $z$.  Let $R$ be a uniformly random bracket of $\tour$ and define $q_{\max}=\max_{a\in P(\tour)}\Pr[R(z)=a]$.
	If there are a total of $N$ brackets for $\tour$, then for any scoring system $\sig$ we have
	\[\res(\tour,\sig)>(1-2q_{\max})N.\]
\end{cor}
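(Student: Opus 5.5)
By Proposition~\ref{resolving lower bound technical}, it suffices to show that $q_{\mathrm{pair}}\le 2q_{\max}$. That is, we want two distinct players $a,b$ with $\Pr[R(x_{a,b})\in\{a,b\}]\le 2q_{\max}$. (Note that the ``source $z$'' in the statement should be the sink $z$.)

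\textbf{Choice of the pair.} Let $a,b$ be any two distinct players with $x_{a,b}=z$. Such a pair exists. Indeed, $z$ is a match, since $\tour$ has at least two players and so the sink is not a source. By property (d), $z$ has at least two in-neighbors $u\ne v$. By Proposition~\ref{Pu Sets}(i) we may pick $a\in P(u)$ and $b\in P(v)$. Proposition~\ref{Pu Sets}(v) says the sets $P(u)$ and $P(v)$ are disjoint, so $P(u)\cap\{a,b\}=\{a\}$ and $P(v)\cap\{a,b\}=\{b\}$. The uniqueness part of Lemma~\ref{matches exist} then gives $x_{a,b}=z$.

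\textbf{The estimate.} With this pair, the events $R(z)=a$ and $R(z)=b$ are disjoint, so
\[q_{\mathrm{pair}}\le \Pr[R(z)\in\{a,b\}]=\Pr[R(z)=a]+\Pr[R(z)=b]\le 2q_{\max}.\]
Substituting into Proposition~\ref{resolving lower bound technical} gives $\res(\tour,\sig)>(1-q_{\mathrm{pair}})N\ge(1-2q_{\max})N$, as claimed.

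\textbf{Main obstacle.} The only step with any content is certifying that $x_{a,b}=z$ for the chosen pair, which needs property (d) and the uniqueness in Lemma~\ref{matches exist}. Everything else is immediate.
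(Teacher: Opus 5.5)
Your proposal is correct and matches the paper's proof: you reduce to Proposition~\ref{resolving lower bound technical} by proving $q_{\mathrm{pair}}\le 2q_{\max}$. You certify $x_{a,b}=z$ by taking $a,b$ from the player sets of two distinct in-neighbors of the sink, and then bound $\Pr[R(z)\in\{a,b\}]$ by $2q_{\max}$. Your remark that ``source'' should read ``sink'' is also right, and you are slightly more careful than the paper in checking that $z$ is a match.
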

\begin{proof}
	We aim to show that $q_{\mathrm{pair}}\le 2q_{\max}$, from which the result will follow by Proposition~\ref{resolving lower bound technical}.  
	
	We claim that there exist distinct players $a,b$ with  $z=x_{a,b}$.  Indeed, $z$ has at least two in-neighbors $u,u'$ by definition of single-elimination tournaments.  Let $a\in P(u)$ and $b\in P(u')$ be arbitrary players, which exist by Proposition~\ref{Pu Sets}(i). By Proposition~\ref{Pu Sets}(v) we have that $u,u'$ are the unique in-neighbors of $z$ which have $a,b$ in their player sets, so taking $u_a=u$ and $u_b=u'$ verifies that $z$ satisfies the conditions of $x_{a,b}$, proving the claim.
	
	With $a,b$ as above, we find
	\[q_{\mathrm{pair}}\le \Pr[R(z)\in \{a,b\}]= \Pr[R(z)=a]+\Pr[R(z)=b]\le 2 q_{\max},\]
	proving the result.
\end{proof}

We now establish our lower bound for $\dim(\tour,\sig)$ as stated in Theorem~\ref{lower bound}.
\begin{prop}\label{lower bound deterministic}
	For every single-elimination tournament $\tour$ and any scoring system $\sig$, we have
	\[\dim(\tour,\sig)\ge \max_{x\in M(\tour)}\left( |P(x)|-\max_{u\in N^-(x)} |P(u)|\right).\]
\end{prop}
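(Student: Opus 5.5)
Fix a match $x\in M(\tour)$ and set $m=\max_{u\in N^-(x)}|P(u)|$. The plan is to show that any $\sig$-resolving set $\c{B}$ has $|\c{B}|\ge |P(x)|-m$, using Proposition~\ref{necessary condition} as the only tool. Equivalently, if $|\c{B}|<|P(x)|-m$, we will find two distinct players $a,b$ with $B_i(x_{a,b})\notin\{a,b\}$ for every $B_i\in\c{B}$, and then $\c{B}$ is not resolving.

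The key step is to locate $a,b$ among players never predicted to win $x$. Each bracket $B_i$ names a single winner $B_i(x)$, which lies in $P(x)$ by Lemma~\ref{bracket facts}(i). So the set $W=\{B_i(x):B_i\in\c{B}\}$ has $|W|\le|\c{B}|<|P(x)|-m$, and the set $U=P(x)\sm W$ of players in $P(x)$ never predicted to win $x$ has $|U|>m$. By Proposition~\ref{Pu Sets}(v), the sets $P(u)$ for $u\in N^-(x)$ partition $P(x)$ into blocks of size at most $m$. Since $|U|>m$, $U$ cannot lie inside a single block, so there are $a,b\in U$ in different blocks $P(u_a)\ne P(u_b)$.

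Next we identify the match where $a$ and $b$ meet. Because the blocks are disjoint, $P(u_a)\cap\{a,b\}=\{a\}$ and $P(u_b)\cap\{a,b\}=\{b\}$, so by the uniqueness in Lemma~\ref{matches exist} we get $x_{a,b}=x$. Since $a,b\notin W$, we have $B_i(x_{a,b})=B_i(x)\notin\{a,b\}$ for every $B_i\in\c{B}$. Proposition~\ref{necessary condition} then shows $\c{B}$ is not $\sig$-resolving for any $\sig$. Taking the maximum over $x$ gives the bound.

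I don't expect a serious obstacle, since the heavy lifting is already done in Proposition~\ref{necessary condition}. The points that need care are the pigeonhole step, which has to produce $a,b$ in \emph{distinct} in-neighbor blocks and not merely two missing players, and the correct identification $x_{a,b}=x$ through the uniqueness part of Lemma~\ref{matches exist}. Edge cases are harmless. If $|P(x)|-m\le 0$ the bound is trivial. The argument also applies when $\c{B}$ is empty, since then $U=P(x)$, which has more than $m$ elements because $x$ has at least two in-neighbors.
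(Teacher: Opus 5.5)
Your proof is correct and follows essentially the paper's argument: you find two players in distinct in-neighbor blocks $P(u_a)\ne P(u_b)$ of $x$ that no bracket of $\c{B}$ predicts to win $x$, observe $x=x_{a,b}$, and invoke Proposition~\ref{necessary condition}. Your pigeonhole on the set $W$ of predicted winners of $x$ is a slightly more direct version of the paper's count. The paper instead partitions $\c{B}$ into the classes $\c{B}_u$ and uses Lemma~\ref{bracket facts}(ii) to pass from $B(u)$ to $B(x)$.
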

\begin{proof}
	Let $\c{B}$ be an arbitrary $\sig$-resolving set, and assume for contradiction that there exists a match $x\in M(\tour)$ with $|\c{B}|<|P(x)|-\max_{u\in N^-(x)} |P(u)|$.  For each $u\in N^-(x)$, let $\c{B}_u\sub \c{B}$ be the set of brackets $B$ with $B(x)=B(u)$, noting that the $\c{B}_u$ sets partition $\c{B}$ since by definition of being a bracket, $B(x)\in \{B(u):u\in N^-(x)\}$ and we have $B(u)\ne B(u')$ for distinct $u,u'\in N^-(x)$ by Proposition~\ref{Pu Sets}(v) and Lemma~\ref{bracket facts}(i).
	
	We claim that there exist at least two vertices $u\in N^-(x)$ such that $|\c{B}_u|<|P(u)|$.  Indeed, if this inequality held for exactly one vertex $u'$ then
	\[|\c{B}|=\sum_{u\in N^-(x)} |\c{B}_{u}|\ge \sum_{u\in N^-(x)\sm \{u'\}} |P(u)|=|P(x)|-|P(u')|,\]
	with this last equality using Proposition~\ref{Pu Sets}(v).  This contradicts the assumption that $|\c{B}|<P(x)-\max_{u\in N^-(x)} |P(u)|$, so no singular vertex $u'$ can exist.  This same argument works if no such vertex $u'$ exists, proving the claim.
	
	Now let $u,u'\in N^-(x)$ be distinct vertices with $|\c{B}_u|<|P(u)|$ and $|\c{B}_{u'}|<|P(u')|$.  In particular, this size condition means there exists some $a\in P(u)$ and $a'\in P(u')$ such that $a\notin \{B(u):B\in \c{B}_{u}\}$ and $a'\notin \{B(u'):B\in \c{B}_{u'}\}$.  This implies $a\notin \{B(x):B\in \c{B}\}$ since by Lemma~\ref{bracket facts}(ii), having $B(x)=a$ is only possible if the unique in-neighbor $u\in N^-(x)$ with $a\in P(u)$ has $B(u)=a$, but by definition of $\c{B}_u$ and $a$, any $B\in \c{B}$ with $B(x)=B(u)$ has $B(u)\ne a$.  The same argument shows $a'\notin \{B(x):B\in \c{B}\}$ as well. Observe that $x=x_{a,a'}$ due to the existence of the in-neighbors $u,u'$, so this in total implies $\c{B}$ is not $\sig$-resolving by Proposition~\ref{necessary condition}, giving the desired contradiction.
\end{proof}

\section{Upper Bounds}\label{sec:upper}

\subsection{Additional Preliminaries}
Our upper bounds rely on ``lifting'' resolving sets from smaller tournaments $\tour'\sub \tour$, similar to the argument from our proof sketch in Section~\ref{sec:sketch}.  We state our main result in this direction in Proposition~\ref{lifting brackets}, for which we need the following definitions.

\begin{defn}
	Given a single-elimination tournament $\tour$, scoring system $\sig$, and vertex $u\in V(\tour)$, we define $\tour_u$ to be the digraph obtained from $\tour$ by deleting any vertex $v$ with $P(v)\not\sub P(u)$, and we define $\sig_u$ to be the restriction of $\sig$ to $M(\tour)\cap V(\tour_u)$.
\end{defn}
By Proposition~\ref{Pu Sets}(iii), $\tour_u$ is equivalently defined to be the digraph obtained by keeping all of the vertices of $\tour$ which have a directed walk to $u$.
\begin{lem}\label{touru are tournaments}
	If $\tour$ is a single-elimination tournament, $\sig$ a scoring system, and $u\in V(\tour)$, then  $\tour_u$ is a single-elimination tournament with $N^-_{\tour_u}(v)=N^-_{\tour}(v)$ for all $v\in V(\tour_u)$ and $\sig_u$ is a scoring system of $\tour_u$.
\end{lem}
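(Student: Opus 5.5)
The plan is to verify the four defining properties (a)--(d) for $\tour_u$ directly, using the characterization (from Proposition~\ref{Pu Sets}(iii)) that $V(\tour_u)$ is exactly the set of vertices $v$ having a directed walk to $u$. Equivalently, $v\in V(\tour_u)$ if and only if $P(v)\sub P(u)$.

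\textbf{In-neighborhoods.} I would first establish the in-neighborhood claim $N^-_{\tour_u}(v)=N^-_{\tour}(v)$ for $v\in V(\tour_u)$, since everything else rests on it. The inclusion $\sub$ is immediate because $\tour_u$ is an induced subgraph. For the reverse inclusion, take $w\in N^-_\tour(v)$. Then $(w,v)$ is a walk, so Claim~\ref{Claim for Pu} (or Proposition~\ref{Pu Sets}(v)) gives $P(w)\sub P(v)\sub P(u)$. Hence $w$ is not deleted, and the edge $w\to v$ survives in $\tour_u$.

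\textbf{Properties (a)--(d).} With this in hand:
\begin{itemize}
\item[(d)] Property (d) is inherited immediately, since in-degrees are unchanged.
\item[(c)] Acyclicity is inherited because $\tour_u$ is a subgraph of $\tour$, and any directed cycle in it would be one in $\tour$.
\item[(b)] Out-degrees in $\tour_u$ are at most those in $\tour$, so every vertex has at most one out-neighbor. Next take $v\in V(\tour_u)$ with $v\ne u$. Since $P(v)\sub P(u)$ and $v\ne u$, there is a directed walk $(v=u_1,\ldots,u_t=u)$ with $t\ge 2$. Here $u_2$ is the unique out-neighbor of $v$ in $\tour$, and $u_2$ lies in $V(\tour_u)$ because it has a walk to $u$. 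So $v$ keeps its out-neighbor and is not a sink of $\tour_u$.
\item[(a)] The vertex $u$ itself is a sink of $\tour_u$. If $u\to w$ with $w\in V(\tour_u)$, then $w$ has a walk to $u$, and together these would form a closed walk, contradicting Lemma~\ref{sources and sinks}(i). Therefore $u$ is the unique sink.
\end{itemize}

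\textbf{Scoring system.} The restriction $\sig_u$ takes positive values on $M(\tour)\cap V(\tour_u)$. It remains to check that this set equals $M(\tour_u)$, meaning that sources of $\tour_u$ are exactly the players of $\tour$ lying in $\tour_u$. This again follows from the equality of in-neighborhoods.

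\textbf{Main obstacle.} There is no real difficulty here. The only step needing care is the in-neighborhood equality, specifically making sure an in-neighbor of a kept vertex is itself kept. That is handled by transitivity of walks as above.
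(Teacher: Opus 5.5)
Your proof is correct and takes essentially the same route as the paper. Both arguments establish $N^-_{\tour_u}(v)=N^-_{\tour}(v)$ via $P(w)\sub P(v)\sub P(u)$, inherit acyclicity, out-degree at most one and property (d), show $u$ is the unique sink using walks to $u$, and read off $M(\tour_u)=M(\tour)\cap V(\tour_u)$ from the in-neighborhood equality. The only differences are cosmetic: you rule out an out-neighbor of $u$ via a closed walk rather than via $P(u)\subsetneq P(w)$, and you spell out that the next vertex on a walk to $u$ remains in $\tour_u$, which the paper leaves implicit.
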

\begin{proof}
	That $\tour_u$ has no directed cycles and has $|N^+_{\tour_u}(v)|\le 1$ for any $v$ follows from these properties holding for $\tour\supseteq \tour_u$.   We prove the the remaining properties through a series of claims.
	\begin{claim}
		The vertex $u$ is the unique sink of $\tour_u$.
	\end{claim}
	\begin{proof}
		Every $v\in V(\tour_u)$ has $P(v)\sub P(u)$ by definition, so there exists a directed walk from $v$ to $u$ by Proposition~\ref{Pu Sets}(iii) and hence $u$ is the only possible sink that $\tour_u$ could have.  On the other hand, if $u$ had some out-neighbor $v$ in $\tour_u$ then this would imply $P(u)\subsetneq P(v)$ by Proposition~\ref{Pu Sets}(iii) and (ii), a contradiction to our definition of $\tour_u$.
	\end{proof}
	\begin{claim}\label{Claim tour_u}
		We have $N^-_{\tour_u}(v)=N^-_{\tour}(v)$ for every $v\in V(\tour_u)$.
	\end{claim}
	\begin{proof}
		If $v\in V(\tour_u)$ and $u'\in N^-_{\tour}(v)$ then $P(u')\sub P(v)\sub P(u)$ by Proposition~\ref{Pu Sets}(v) and the definition of $\tour_u$, implying that $u'\in V(\tour_u)$ and hence $u'\in N^-_{\tour_u}(v)$.  This means $N^-_{\tour}(v)\sub N^-_{\tour_u}(v)$, and the other inclusion holds simply because $\tour_u$ is a sub-digraph of $\tour$. 
	\end{proof}
	Note that this last claim implies $|N^-_{\tour_u}(v)|=|N^-_{\tour}(v)|\ne 1$ for any $v$, which combined with the first claim and the observations at the start of the proof imply that $\tour_u$ is indeed a single-elimination tournament.  Moreover, Claim~\ref{Claim tour_u} exactly gives the second part of the lemma.
	
	Finally, we prove that $\sig_u$ is a scoring system for $\tour_u$, i.e.\ that its domain (which by definition is $M(\tour)\cap V(\tour_u)$) equals $M(\tour_u)$.  And indeed, Claim~\ref{Claim tour_u} implies that a vertex in $\tour_u$ is not a source if and only if it is not a source in $\tour$, proving $M(\tour)\cap V(\tour_u)=M(\tour_u)$.
\end{proof}
We need one more lemma before stating and proving our main result around $\tour_u$.

\begin{lem}\label{touru has at most one bad inneighbor}
	If $\tour$ is a single-elimination tournament and $u\in V(\tour)$, then every match $x\in M(\tour)\sm  V(\tour_u)$ has at most one in-neighbor $v\in N^-(x)$ with $P(v)\cap P(u)\ne \emptyset$.
\end{lem}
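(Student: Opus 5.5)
I will argue by contradiction, using the trichotomy of Proposition~\ref{Pu Sets}(iv) together with the partition property (v). Fix $x\in M(\tour)\sm V(\tour_u)$, so that $P(x)\not\sub P(u)$. Suppose there are two distinct in-neighbors $v,v'\in N^-(x)$ with $P(v)\cap P(u)\ne\emptyset$ and $P(v')\cap P(u)\ne\emptyset$. Choose players $a\in P(v)\cap P(u)$ and $a'\in P(v')\cap P(u)$.

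The first step is to compare $u$ with $v$. By Proposition~\ref{Pu Sets}(iv), either $u=v$, or $P(u)\subsetneq P(v)$, or $P(v)\subsetneq P(u)$; disjointness is ruled out because $a$ lies in both sets. The second step does the same for $v'$. If $P(u)\sub P(v)$ (which includes the case $u=v$), then $a'\in P(u)\sub P(v)$. But $a'\in P(v')$ as well, contradicting the fact from Proposition~\ref{Pu Sets}(v) that $P(v)$ and $P(v')$ are disjoint. Hence $P(v)\subsetneq P(u)$, and the symmetric argument gives $P(v')\subsetneq P(u)$.

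The third step handles $x$ itself. Since $P(x)\not\sub P(u)$, Proposition~\ref{Pu Sets}(iv) leaves two options: $P(x)\cap P(u)=\emptyset$ or $P(u)\subsetneq P(x)$. The first is impossible because $a\in P(v)\sub P(x)$ and $a\in P(u)$. So $P(u)\subsetneq P(x)$, and by Proposition~\ref{Pu Sets}(iii) there is a directed walk from $u$ to $x$. This walk has length at least one since $u\ne x$, so its second-to-last vertex is some $w\in N^-(x)$ with $P(u)\sub P(w)$ by Claim~\ref{Claim for Pu}. Then $a\in P(w)\cap P(v)$ and $a'\in P(w)\cap P(v')$. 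By the disjointness in Proposition~\ref{Pu Sets}(v), this forces $w=v$ and also $w=v'$, contradicting $v\ne v'$.

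The only delicate point is ruling out each branch of Proposition~\ref{Pu Sets}(iv). In fact the second step is unnecessary: the third step alone gives the contradiction directly. It uses only that $a\in P(u)\cap P(x)$ and $P(x)\not\sub P(u)$ to force a directed walk from $u$ to $x$, whose last edge enters $x$ through a single in-neighbor $w$ containing all of $P(u)$. So the final write-up will just be the third step.
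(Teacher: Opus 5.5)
Your proof is correct, and your streamlined version (the third step alone) takes a different route from the paper's.

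\textbf{The paper's route.} It is essentially your discarded second step followed by a different finish. From $a'\in P(u)\cap P(v')$ and the disjointness of $P(v)$ and $P(v')$, it gets $P(u)\not\sub P(v)$. Proposition~\ref{Pu Sets}(iv) then gives $P(v)\subsetneq P(u)$, and Proposition~\ref{Pu Sets}(iii) gives a directed walk from $v$ to $u$. Since $x$ is the unique out-neighbor of $v$, this walk passes through $x$. Hence $P(x)\sub P(u)$, contradicting $x\notin V(\tour_u)$.

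\textbf{Comparison.} The paper places $v$ below $u$ and uses out-degree one to pull $x$ below $u$, so the hypothesis on $x$ is used only at the very end. You use the hypothesis on $x$ first, to place $u$ strictly below $x$, and then use the partition of $P(x)$ among the in-neighbors of $x$. Your version gives a slightly more informative conclusion. Whenever $P(x)\cap P(u)\ne\emptyset$, the only in-neighbor of $x$ meeting $P(u)$ is the vertex $w$ through which the walk from $u$ enters $x$, and in fact $P(u)\sub P(w)$. This explicitly identifies the vertex that Proposition~\ref{lifting brackets} later calls ``the unique in-neighbor $v'$ of $x$ with $P(v')\cap P(u)\ne\emptyset$.'' The paper's version is equally short and applies the disjointness from Proposition~\ref{Pu Sets}(v) only once rather than twice.
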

\begin{proof}
	Assume for contradiction that there existed two such in-neighbors $v,v'$.  Note that this means $P(u)\not \subseteq P(v)$ since $P(u)\cap P(v')\ne \emptyset$ and $P(v),P(v')$ are disjoint by Proposition~\ref{Pu Sets}(v).  It follows by Proposition~\ref{Pu Sets}(iv) that $P(v)\subsetneq P(u)$, which by Proposition~\ref{Pu Sets}(iii) means there is a directed walk $(u_1,\ldots,u_t)$ with $t\ge 2$ from $v$ to $u$.  Note that necessarily $u_2=x$ since $x$ is the unique out-neighbor of $v$, so $(u_2,\ldots,u_t)$ is a directed walk from $x$ to $u$, implying $P(x)\sub P(u)$ by Proposition~\ref{Pu Sets}(iii), a contradiction to the assumption $x\notin V(\tour_u)$.
\end{proof}

Our main result for these $\tour_u$ tournaments is a method which reduces finding resolving sets of $\tour$ to finding resolving sets of $\tour_u$.  Here and for the rest of the paper we adopt the convention that brackets $\widetilde{B}$ with tildes above them refer to brackets of $\tour_u$.
\begin{prop}\label{lifting brackets}
	Let $\tour$ be a single-elimination tournament, $\sig$ a scoring system, $u\in V(\tour)$, $\c{B}=\{B_1,\ldots,B_r\}$ a set of brackets, and $\widetilde{B}_i$ the restriction of each $B_i$ to $\tour_u$.  If:
	\begin{itemize}
		\item[(a)] $\{\widetilde{B}_1,\ldots,\widetilde{B}_r\}$ is a $\sig_u$-resolving set for $\tour_u$,
		\item[(b)] $B_i(x)\notin P(u)$ for all $x\in M(\tour)\sm  V(\tour_u)$, and 
		\item[(c)] For every pair of brackets $B,B'$ with $\score_\sig(B_i,B)=\score_\sig(B_i,B')$ for all $i$, we have for all $a\in P(\tour)\sm P(u)$ and every $x\in M(\tour)$ that $B(x)=a$ if and only if $B'(x)=a$;
	\end{itemize}
	then $\c{B}$ is a $\sig$-resolving set.
\end{prop}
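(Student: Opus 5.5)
Suppose $B,B'$ are brackets with $\score_\sig(B_i,B)=\score_\sig(B_i,B')$ for all $i$. We will show $B=B'$. The strategy is to split each score into a part inside $\tour_u$ and a part outside, show the outside parts agree, deduce the inside scores agree, and then invoke (a).

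\textbf{Step 1: behaviour outside $\tour_u$.} By (c), for every match $x$ and every $a\notin P(u)$ we have $B(x)=a$ iff $B'(x)=a$. So on every match $x$, either $B(x)=B'(x)\notin P(u)$, or both $B(x),B'(x)\in P(u)$. The next claim is that for $x\in M(\tour)\sm V(\tour_u)$ the two brackets' agreement with $B_i$ is identical. By (b), $B_i(x)\notin P(u)$. Hence $B_i(x)=B(x)$ forces $B(x)\notin P(u)$, which forces $B'(x)=B(x)$; symmetrically in the other direction. Thus $\{x\notin V(\tour_u): B_i(x)=B(x)\}=\{x\notin V(\tour_u): B_i(x)=B'(x)\}$, and subtracting the outside contributions gives
\[\score_{\sig_u}(\widetilde B_i,B|_{\tour_u})=\score_{\sig_u}(\widetilde B_i,B'|_{\tour_u})\quad\text{for all } i.\]
For this we also need that $B|_{\tour_u}$ is a bracket of $\tour_u$. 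This follows from Lemma~\ref{touru are tournaments}: in-neighbourhoods are preserved and $P(\tour_u)=P(u)\cap P(\tour)$.

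\textbf{Step 2: inside $\tour_u$.} By (a), Step 1 gives $B|_{\tour_u}=B'|_{\tour_u}$. In particular $B(u)=B'(u)$.

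\textbf{Step 3: matches outside $\tour_u$.} It remains to show $B(x)=B'(x)$ for $x\notin V(\tour_u)$ with $B(x)\in P(u)$. This is the main obstacle. For such $x$, let $a:=B(x)\in P(u)$; then $a\in P(x)\cap P(u)$. By Proposition~\ref{Pu Sets}(iv), either $P(x)\subsetneq P(u)$, which is impossible since $x\notin\tour_u$, or $P(u)\subsetneq P(x)$, so $u$ has a walk to $x$. Then Lemma~\ref{bracket facts}(ii) applied with $v=x$ and the vertex $u$ (where $P(u)\sub P(x)$ and $B(x)\in P(u)$) gives $B(u)=B(x)$.

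Likewise, if $B'(x)\in P(u)$ then $B'(x)=B'(u)$. Step 1 shows $B'(x)\in P(u)$ whenever $B(x)\in P(u)$. Therefore $B(x)=B(u)=B'(u)=B'(x)$.

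Combining the three steps, $B=B'$ everywhere, so $\c{B}$ is $\sig$-resolving. The delicate points to check carefully are the restriction argument in Step 1 and the use of Lemma~\ref{bracket facts}(ii) in Step 3.
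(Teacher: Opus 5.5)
Your proof is correct. Steps 1 and 2 match the paper's first three claims: by (b) and (c) the agreement sets outside $\tour_u$ coincide, so the $\sig_u$-scores agree and (a) forces $\widetilde B=\widetilde B'$.

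You diverge from the paper in the final step. The paper picks a match $x$ with $B(x)\neq B'(x)$ and $|P(x)|$ minimal. It then uses Lemma~\ref{touru has at most one bad inneighbor} to find the unique in-neighbor $v'$ of $x$ whose player set meets $P(u)$, and pushes agreement down one level via $B(x)=B(v')=B'(v')=B'(x)$. You instead jump straight to $u$. Since $B(x)\in P(u)\cap P(x)$ and $x\notin V(\tour_u)$, Proposition~\ref{Pu Sets}(iv) forces $P(u)\subsetneq P(x)$. Lemma~\ref{bracket facts}(ii) then gives $B(x)=B(u)$, and likewise $B'(x)=B'(u)$ (using that $B'(x)\in P(u)$ by (c)). So everything reduces to $B(u)=B'(u)$, which Step 2 supplies.

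What each route buys: yours avoids both the extremal choice and Lemma~\ref{touru has at most one bad inneighbor} here (the paper still needs that lemma for Lemma~\ref{constructing lifting brackets}). It also makes explicit a structural fact: outside $\tour_u$, a bracket is determined by its value at $u$ together with the matches won by players outside $P(u)$. The paper's local, minimal-$|P(x)|$ induction is simply in keeping with the style of its other arguments.
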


\begin{proof}
	To aid with this proof, we say a pair of brackets $B,B'$ are $\sig$-indistinguishable if we have $\score_\sig(B_i,B)=\score_\sig(B_i,B')$ for all $i$, and for every bracket $B$ of $\tour$ we let $\widetilde{B}$ denote its restriction to $\tour_u$.  Note that $\widetilde{B}$ is always a bracket for $\tour_u$ if $B$ is for $\tour$ since $N^-_{\tour_u}(v)=N^-_{\tour}(v)$ for $v\in V(\tour_u)$.
	
	We aim to show that any two $\sig$-indistinguishable brackets equal each other on every match, which we do via a series of claims.
	\begin{claim}
		If $B,B'$ are $\sig$-indistinguishable, then for every $x\in M(\tour)\sm V(\tour_u)$ and $i$ we have $B(x)=B_i(x)$ if and only if $B'(x)=B_i(x)$.
	\end{claim}
	\begin{proof}
		By (b) any such $x$ has $a:=B_i(x)\notin P(u)$, so by (c) having $B(x)=B_i(x)=a$ is equivalent to having $B'(x)=B_i(x)=a$ as desired.
	\end{proof}
	\begin{claim}
		If $B,B'$ are $\sig$-indistinguishable, then for all $1\le i\le r$ there exists a real number $\al_i$ such that
		\[\score_\sig(B_i,B)=\al_i+\score_{\sig_u}(\widetilde{B}_i,\widetilde{B}),\]
		and
		\[\score_\sig(B_i,B')=\al_i+\score_{\sig_u}(\widetilde{B}_i,\widetilde{B}').\]
	\end{claim}
	\begin{proof}
		Define \[\al_i=\sum_{x\in M(\tour)\sm V(\tour_u):B_i(x)=B(x)} \sig(x),\]
		noting that by definition
		\[\score_\sig(B_i,B)=\sum_{x\in M(\tour)\sm V(\tour_u):B_i(x)=B(x)} \sig(x)+\sum_{x\in M(\tour)\cap V(\tour_u):B_i(x)=B(x)} \sig(x)=\al_i+\score_{\sig_u}(\widetilde{B}_i,\widetilde{B}).\]
		On the other hand, we also have \[\al_i=\sum_{x\in M(\tour)\sm V(\tour_u):B_i(x)=B'(x)} \sig(x),\] since the previous claim implies that $B_i(x)=B(x)$ if and only if $B_i(x)=B'(x)$ for $x\notin V(\tour_u)$, so by the same reasoning as above we find
		\[\score_\sig(B_i,B')=\al_i+\score_{\sig_u}(\widetilde{B}_i,\widetilde{B}'),\]
		proving the claim.
	\end{proof}
	\begin{claim}\label{Claim lifting and restrictions}
		If $B,B'$ are $\sig$-indistinguishable, then $\widetilde{B}=\widetilde{B}'$.
	\end{claim}
	\begin{proof}
		The previous claim implies that $\score_{\sig_u}(\widetilde{B}_i,\widetilde{B})=\score_{\sig_u}(\widetilde{B}_i,\widetilde{B}')$ for all $i$, which means $\widetilde{B}=\widetilde{B}'$ since $\{\widetilde{B}_1,\ldots,\widetilde{B}_r\}$ is $\sig_u$-distinguishing by (a).
	\end{proof}
	Now assume for contradiction that there exist $\sig$-indistinguishable brackets $B,B'$ with $B\ne B'$.  Because $B(a)=a=B'(a)$ for all $a\in P(\tour)$ by definition of brackets, we must have $B(x)\ne B'(x)$ for some match $x$, and we choose such a match with $|P(x)|$ as small as possible.  Note that $B(x)=B'(x)$ for $x\in V(\tour_u)$ by Claim~\ref{Claim lifting and restrictions}, so we must have $x\notin V(\tour_u)$.  Similarly by (c) we must have $B(x),B'(x)\in P(u)$.  Because $B(x)=B(v)\in P(v)$ for some $v\in N^-(x)$ by definition of brackets and Lemma~\ref{bracket facts}(i), the fact that $B(x)\in P(u)$ implies from Lemma~\ref{touru has at most one bad inneighbor} that $B(x)=B(v')$ for the unique in-neighbor $v'$ of $x$ with $P(v')\cap P(u)\ne \emptyset$.  The same line of reasoning implies that $B'(x)=B'(v')$.  But this now implies that
	\[B(x)=B(v')=B'(v')=B'(x),\]
	where the middle equality is immediate if $v'\in P(\tour)$ and otherwise follows by the minimality of $|P(x)|$ if $v'$ is a match.  This gives a contradiction to our choice of $x$, so we conclude that every $\sig$-indistinguishable pair has $B=B'$, proving that $\c{B}$ is a $\sig$-resolving set as desired. 
\end{proof}

We close with a fact which will be useful for constructing brackets as in Proposition~\ref{lifting brackets}, where here we recall that $\widehat{B}_a$ denotes the bracket obtained from $\widehat{B}$ by setting $\widehat{B}_a(v)=a$ for all $v$ with $a\in P(v)$.
\begin{lem}\label{constructing lifting brackets}
	Let $\tour$ be a single-elimination tournament, $u\in V(\tour)$, and $\widetilde{B}$ a bracket for $\tour_u$.  Then there exists a bracket $\widehat{B}$ for $\tour$ such that the restriction of $\widehat{B}$ to $\tour_u$ equals $\widetilde{B}$ and such that $\widehat{B}(x)\notin P(u)$ for all $x\in M(\tour)\sm V(\tour_u)$.  Moreover, the bracket $\widehat{B}_a$ continues to satisfy these conditions for every $a\notin P(u)$
\end{lem}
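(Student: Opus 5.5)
We construct $\widehat{B}$ explicitly: keep $\widetilde{B}$ on $V(\tour_u)$, and on the remaining vertices let some player outside $P(u)$ win. If $u$ is the sink then $M(\tour)\sm V(\tour_u)=\emptyset$. In that case we take $\widehat{B}=\widetilde{B}$, and since $P(u)=P(\tour)$ the ``moreover'' clause is vacuous. So assume $u$ is not the sink.

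\textbf{Definition of $\widehat{B}$.} Set $\widehat{B}(v)=\widetilde{B}(v)$ for $v\in V(\tour_u)$ and $\widehat{B}(a)=a$ for every player $a$. For matches $x\notin V(\tour_u)$ we proceed in order of increasing $|P(x)|$. By Lemma~\ref{touru has at most one bad inneighbor}, $x$ has at most one in-neighbor $v'$ with $P(v')\cap P(u)\ne\emptyset$. Since $|N^-(x)|\ge 2$, there is some in-neighbor $w$ with $P(w)\cap P(u)=\emptyset$. Define $\widehat{B}(x)=\widehat{B}(w)$ for such a $w$, chosen arbitrarily.

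\textbf{Verification.} We check by induction on $|P(x)|$ that $\widehat{B}(x)\notin P(u)$. By Lemma~\ref{bracket facts}(i) applied inductively, $\widehat{B}(w)\in P(w)$, and $P(w)$ is disjoint from $P(u)$. The bracket conditions then need checking:
\begin{itemize}
\item On $V(\tour_u)$ they hold because $N^-_{\tour_u}(v)=N^-_\tour(v)$ by Lemma~\ref{touru are tournaments}.
\item Outside $V(\tour_u)$ they hold by construction, since $\widehat{B}(x)$ is the value at an in-neighbor.
\end{itemize}
One subtlety is that the inductive claim $\widehat{B}(w)\in P(w)$ must be established simultaneously. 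It holds because $\widehat{B}(x)=\widehat{B}(w)\in P(w)\subseteq P(x)$, and on $\tour_u$ it follows from Lemma~\ref{bracket facts}(i) applied to $\widetilde{B}$. Properly, the recursion defines $\widehat{B}$ on all vertices and we then note that it is a bracket.

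\textbf{The moreover clause.} Take $a\notin P(u)$. First, $\widehat{B}_a$ is a bracket by Lemma~\ref{bracket modification}. Next, for $v\in V(\tour_u)$ we have $P(v)\subseteq P(u)$, so $a\notin P(v)$ and $\widehat{B}_a(v)=\widehat{B}(v)=\widetilde{B}(v)$; hence the restriction is unchanged. Finally, for $x\in M(\tour)\sm V(\tour_u)$, either $\widehat{B}_a(x)=a\notin P(u)$, or $\widehat{B}_a(x)=\widehat{B}(x)\notin P(u)$.

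The step needing the most care is the existence of the in-neighbor $w$ avoiding $P(u)$. This rests on Lemma~\ref{touru has at most one bad inneighbor} together with property (d), which guarantees that every match has at least two in-neighbors.
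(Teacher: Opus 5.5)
Your proof is correct and follows essentially the same construction as the paper. Both keep $\widetilde{B}$ on $\tour_u$, then fill in the remaining matches in order of increasing $|P(x)|$ by copying an in-neighbor whose player set avoids $P(u)$ (which exists by Lemma~\ref{touru has at most one bad inneighbor} together with $|N^-(x)|\ge 2$), and then observe that $\widehat{B}_a$ leaves $\tour_u$ unchanged because $a\notin P(u)\supseteq P(v)$. Your extra steps — the separate sink case, and deriving $\widehat{B}(x)\notin P(u)$ from Lemma~\ref{bracket facts}(i) once $\widehat{B}$ is known to be a bracket — only fill in details the paper calls straightforward.
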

\begin{proof}
	We define $\widehat{B}:V(\tour)\to P(\tour)$ iteratively as follows.  We begin by assigning $\widehat{B}(v)=\widetilde{B}(v)$ for all $v\in V(\tour_u)$ as well as $\widehat{B}(a)=a$ for all $a\in P(\tour)$.  Iteratively if $x\in M(\tour)$ is a match which is unassigned with $|P(x)|$ as small as possible,  we define $\widehat{B}(x)=\widehat{B}(v)$ for any $v\in N^-(x)$ with $P(v)\cap P(u)=\emptyset$; note that such a $v$ must exist by Lemma~\ref{touru has at most one bad inneighbor} and that $\widehat{B}(v)$ is well-defined by our choice of unassigned $x$ with $|P(x)|$ as small as possible.  It is straightforward to check that the resulting $\widehat{B}$ is a bracket with the desired properties, proving the first half of the result.
	
	Now consider $\widehat{B}_a$ for some $a\notin P(u)$.  The only way this could fail the conditions of the lemma is if $\widehat{B}_a(v)\ne \widetilde{B}(v)$ for some $v\in V(\tour_u)$.  Because $\widehat{B}(v)=\widetilde{B}(v)$ for $v\in V(\tour_u)$, we must have $a\in P(v)$ by definition of $\widehat{B}_a$.  But each $v\in V(\tour_u)$ has $P(v)\sub P(u)$ by definition of $\tour_u$, a contradiction to $a\in P(v)$ and $a\notin P(u)$, proving the result.
\end{proof}

We now prove our remaining results which we split into three subsections for convenience.

\subsection{Tightness of Lower Bounds}
We begin by proving that there exist scoring systems such that our general lower bound for $\dim(\tour,\sig)$ is tight.  To this end, we say that a scoring system $\sig$ has \textit{distinct subset sums} if having $\sum_{x\in M} \sig(x)=\sum_{x\in M'} \sig(x)$ for two sets of matches $M,M'$ implies that $M=M'$.   We record the following immediate consequence of this definition for ease of use.
\begin{lem}\label{distinct sums fact}
	If $\tour$ is a single-elimination tournament, if $\sig$ is a scoring system with distinct subset sums, and if $B_0,B,B'$ are brackets with $\score_\sig(B_0,B)=\score_\sig(B_0,B')$, then for every match $x$ we have $B_0(x)=B(x)$ if and only if $B_0(x)=B'(x)$. 
\end{lem}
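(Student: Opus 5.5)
The claim follows directly from the definition of distinct subset sums. Fix $B_0,B,B'$ with $\score_\sig(B_0,B)=\score_\sig(B_0,B')$. Introduce the two sets of matches
\[M=\{x\in M(\tour):B_0(x)=B(x)\},\qquad M'=\{x\in M(\tour):B_0(x)=B'(x)\}.\]
By the definition of the $\sig$-score we have $\score_\sig(B_0,B)=\sum_{x\in M}\sig(x)$ and $\score_\sig(B_0,B')=\sum_{x\in M'}\sig(x)$. The hypothesis therefore says exactly that $\sum_{x\in M}\sig(x)=\sum_{x\in M'}\sig(x)$.

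Since $\sig$ has distinct subset sums, this equality of sums forces $M=M'$. Unpacking what membership in these sets means gives the statement: a match $x$ lies in $M$ if and only if $B_0(x)=B(x)$, and lies in $M'$ if and only if $B_0(x)=B'(x)$. So for every match $x$, $B_0(x)=B(x)$ holds if and only if $B_0(x)=B'(x)$ holds.

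There is no real obstacle here. The only point to check is that both scores are sums of $\sig$ over subsets of the same ground set $M(\tour)$, so that the distinct subset sums hypothesis applies to the pair $M,M'$ directly. This is immediate from the definition of $\score_\sig$.
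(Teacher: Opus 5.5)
Your proof is correct and matches the paper's argument: it defines $M$ and $M'$ as the sets of matches on which $B_0$ agrees with $B$ and with $B'$, identifies the two scores as the corresponding subset sums of $\sigma$, and uses distinct subset sums to conclude $M=M'$.
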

\begin{proof}
	Let $M$ be the set of matches with $B_0(x)=B(x)$ and similarly define $M'$.  Then by definition
	\[\sum_{x\in M} \sig(x)=\score_\sig(B_0,B)=\score_\sig(B_0,B')=\sum_{x\in M'} \sig(x),\]
	and by hypothesis on $\sig$ this implies $M=M'$ as desired.
\end{proof}

We now determine $\dim(\tour,\sig)$ exactly for scoring system with distinct subset sums.

\begin{lem}\label{tight deterministic}
	If $\tour$ is a single-elimination tournament with at least 2 players and if $\sig$ is a scoring system with distinct subset sums, then \[\dim(\tour,\sig)= \max_{x\in M(\tour)}\left( |P(x)|-\max_{u\in N^-(x)} |P(u)|\right).\]
\end{lem}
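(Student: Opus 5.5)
Write $m(x)=|P(x)|-\max_{u\in N^-(x)}|P(u)|$ and $k=\max_{x} m(x)$. The lower bound $\dim(\tour,\sig)\ge k$ is Proposition~\ref{lower bound deterministic}, so only the upper bound is needed. I will construct a set of $k$ brackets and verify that it resolves, using Lemma~\ref{distinct sums fact}. That lemma says that if $\score_\sig(B_i,B)=\score_\sig(B_i,B')$, then for every match $x$ we have $B_i(x)=B(x)$ iff $B_i(x)=B'(x)$. So it suffices to build brackets $B_1,\dots,B_k$ such that, for every match $x$, the pattern $\{i: B_i(x)=B(x)\}$ over $i$ determines $B(x)$. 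Equivalently, it suffices that every match $x$ satisfy the following.

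\emph{Condition.} For every match $x$, at most one player $a\in P(x)$ fails to appear among $\{B_i(x)\}$.

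Indeed, if $B(x)=a$ appears as $B_i(x)$, then $B'(x)=a$ as well. If instead $B(x)$ appears nowhere, then $B'(x)$ appears nowhere too, so both equal the unique missing player.

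Next I reduce the Condition to a counting requirement at each match. By Lemma~\ref{bracket facts}(ii), $B_i(x)=a$ with $a\in P(u)$, $u\in N^-(x)$, forces $B_i(u)=a$. So the winners at $x$ are obtained by choosing, from each in-neighbor $u$, some of the players that won $u$. If every in-neighbor $u$ except one had all of $P(u)$ represented as winners at $x$, then the Condition at $x$ would need at most $|P(x)|-|P(u^*)|\le k$ brackets for the covering, with the largest child $u^*$ allowed to have missing players. This is too crude, though: players of $P(u^*)$ missing at $x$ must number at most one in total.

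So I plan a recursive construction, by induction on $|P(z)|$, of $k$ brackets with the stronger property that at each match $x$ all of $P(x)$ except at most one player appears as a winner of $x$. At the sink $z$, with largest child $u^*$, the other children contribute $|P(z)|-|P(u^*)|\le k$ players. Take $k$ brackets for $\tour_{u^*}$ from induction: at most $k_{u^*}\le k$ are needed, padded with duplicates. They cover all but one player of $P(u^*)$ at $u^*$. For the other children $u$, use induction on $\tour_u$ similarly.

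The main obstacle is the count at $z$. Each bracket has exactly one winner at $z$, so the $k$ brackets give at most $k$ distinct winners, while the Condition needs $|P(z)|-1$ of them. That is far too many. So requiring that winners at $x$ nearly exhaust $P(x)$ is wrong.

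The fix is to relax the Condition. Players eliminated earlier are already identified: if $B(u)$ has been determined for every $u\in N^-(x)$, then $B(x)$ is one of at most $|N^-(x)|$ known candidates. Proceeding by induction on $|P(x)|$, it therefore suffices that among the values $\{B(u):u\in N^-(x)\}$ at most one is never predicted as the winner of $x$ by any $B_i$, over all possible brackets $B$. This is where the parameter $k$ enters.

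So I will construct brackets, generalizing the pairing trick of the sketch, such that for each match $x$ and each child $u$ other than one designated largest child $u^*$, every player of $P(u)$ is predicted to win $x$ by some $B_i$. That needs $\sum_{u\ne u^*}|P(u)|=m(x)\le k$ brackets at $x$. Players from $P(u^*)$ are then the default when nothing matches.

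The brackets are built top-down. Bracket $i$ sends its $x$-winner down a path, and assignments at different matches are compatible because each bracket chooses, at $x$, a player from a non-largest child and then recursively fills that child. I expect the bookkeeping of this simultaneous assignment, via Lemma~\ref{bracket modification} (setting $\widehat{B}_a$ to push a chosen player $a$ up to $x$), to be the delicate step. Ensuring each bracket's choices at nested matches are consistent seems plausible since the sets $P(u)$ for $u\ne u^*$ at different $x$ along a chain are disjoint.
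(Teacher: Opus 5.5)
Your final reduction is sound, and it verifies resolvability differently from the paper. Suppose that for every match $x$, each player of $P(x)\sm P(u^*_x)$ equals $B_i(x)$ for some $i$. Take indistinguishable $B,B'$ and a match $x$ with $B(x)\ne B'(x)$ and $|P(x)|$ minimal. The two values come from distinct in-neighbors of $x$, on which $B$ and $B'$ agree. One of these in-neighbors is not $u^*_x$, so one of the two values is some $B_i(x)$, and Lemma~\ref{distinct sums fact} then forces $B(x)=B'(x)$. This bypasses Proposition~\ref{lifting brackets}.

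\textbf{The gap.} You never show that a \emph{single} family of $k$ brackets meets this covering requirement at all matches simultaneously. The count $m(x)\le k$ is purely local. The global statement is the whole content of the upper bound, and you leave it as ``delicate'' and ``plausible'' with reasons that do not hold:
\begin{itemize}
\item The sets $P(x)\sm P(u^*_x)$ along a chain are not disjoint. If $v$ is a non-largest in-neighbor of $x$, then every match $y$ of $\tour_v$ has $P(y)\sm P(u^*_y)\sub P(v)$. In Figure~\ref{figure}, with $u^*_z$ the semifinal $x$ and $u^*_y=c$, the sets $\{c,d\}$ at $z$ and $\{d\}$ at $y$ overlap.
\item $\widehat{B}_a$ from Lemma~\ref{bracket modification} does not push $a$ ``up to $x$''. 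It makes $a$ win every match containing it, all the way to the sink, so in a top-down scheme it overwrites choices already made above $x$.
\item Recursing into the non-largest child is the wrong direction.
\end{itemize}

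\textbf{The fix.} The missing idea is to recurse into the \emph{largest} in-neighbor of the sink.
\begin{itemize}
\item Let $w$ maximize $|P(w)|$ over $N^-(z)$. By induction, take brackets for $\tour_w$ with your covering property. There are at most $k$ of them, since matches of $\tour_w$ keep their in-neighborhoods by Lemma~\ref{touru are tournaments}; pad by repetition to exactly $k$.
\item Write $P(z)\sm P(w)=\{a_1,\ldots,a_k\}$ with repetition, which is possible since $1\le m(z)\le k$.
\item Extend the $i$th bracket to $\tour$ via Lemma~\ref{constructing lifting brackets} and replace it by $\widehat{B}^i_{a_i}$.
\end{itemize}
Now check the covering property:
\begin{itemize}
\item Since $a_i\notin P(w)$, the restrictions to $\tour_w$ are unchanged, so covering holds inside $\tour_w$ by induction.
\item At $z$, each $a_i$ wins in its own bracket.
\item Every other match $x\notin V(\tour_w)$ has $P(x)\sub\{a_1,\ldots,a_k\}$, so all of $P(x)$ is covered.
\end{itemize}
No consistency problem arises. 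The constraints imposed at a match never involve its largest child, where the brackets are left free for the induction. Inside the non-largest subtrees, coverage is automatic.

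This is precisely the paper's construction, and combined with your verification it gives a complete proof. Incidentally, the paper takes $w$ ``arbitrary'', but its inequality $|P(z)|-|P(w)|\le r$ requires $w$ to be a largest in-neighbor, as in your setup.
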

\begin{proof}
	The lower bound follows from Proposition~\ref{lower bound deterministic}.  We prove the upper bound by induction on the number of vertices of $\tour$.  Since we consider $\tour$ with at least 2 players, the base case $|V(\tour)|=3$ has a unique single-elimination tournament which can easily be checked to have $\dim(\tour,\sig)=1$ which is the bound we wish to show in this case.
	
	Assume we have proven the result up to some number of vertices $\nu\ge 4$.  Let $\tour$ be a $\nu$-vertex single-elimination tournament, and for notational convenience let
	\[r:= \max_{x\in M(\tour)}\left( |P(x)|-\max_{u\in N^-(x)} |P(u)|\right).\]  Let $z$ denote the sink of $\tour$ and $w\in N^-(z)$ an arbitrary in-neighbor.  We aim to apply Proposition~\ref{lifting brackets} to $\tour_w$.  For this we observe that $\tour_w$ has strictly fewer vertices than $\tour$ since $z\notin V(\tour_w)$ and that $\sig_w$ has distinct subset sums since $\sig$ does.
	
	\begin{claim}
		There exists a $\sig_w$-resolving set of brackets $\{\widetilde{B}^1,\ldots,\widetilde{B}^r\}$ for $\tour_w$. 
	\end{claim}
	\begin{proof}
		If $\tour_w$ has only 1 player then one can trivially take $\widetilde{B}^i$ to be the unique bracket on $\tour'$.  Otherwise, because $\tour_w$ has fewer vertices than $\tour$ we inductively know that we can find a $\sig_w$-resolving set $\widetilde{\c{B}}$ of size at most  \[\max_{v\in M(\tour_w)}\left( |P(v)|-\max_{u\in N^-_{\tour_w}(v)} |P(u)|\right)=\max_{v\in M(\tour)\cap V(\tour_w)}\left( |P(v)|-\max_{u\in N^-_{\tour}(v)} |P(u)|\right)\le r,\]
		where our equality implicitly used the first two parts of Lemma~\ref{touru are tournaments}.   As such, we can write the elements of this small $\sig_w$-resolving set as $\widetilde{\c{B}}=\{\widetilde{B}^1,\ldots,\widetilde{B}^r\}$ after possibly repeating some of the elements in this list.
	\end{proof}
	In conjunction with this, observe that $|P(\tour)\sm P(w)|=|P(z)|-|P(w)|\le r$ by definition of $r$.  As such, we can write $P(\tour)\sm P(w)=\{a_1,\ldots,a_r\}$  after possibly repeating some of the elements in this list.

	By Lemma~\ref{constructing lifting brackets}, for each $1\le i\le r$ there exist brackets $\widehat{B}^i$ for $\tour$ which equal $\widetilde{B}^i$ when restricted to $V(\tour_u)$ and which have $\widehat{B}^i(x)\notin P(u)$ for all $x\in M(\tour)\sm V(\tour_u)$, and moreover these properties continue to hold for the bracket $\widehat{B}_{a_i}^i$.  As such, the brackets  $\c{B}=\{\widehat{B}_{a_1}^1,\ldots,\widehat{B}_{a_r}^r\}$ satisfy (a) and (b) of Proposition~\ref{lifting brackets}, so we need only check that they satisfy (c) as well to prove that this is a $\sig$-resolving set of the desired size.
	
	To this end, assume for contradiction that there exist $B,B'$ with $\score_\sig(\widehat{B}^i_{a_i},B)=\score_\sig(\widehat{B}_{a_i}^i,B')$ for all $i$ and that there exists some $a_i\in P(\tour)\sm P(u)$ and $x\in M(\tour)$ with, say, $B(x)=a_i$ and $B'(x)\ne a_i$.  Now having $B(x)=a_i$ implies that $a_i\in P(x)$ by Lemma~\ref{bracket facts}(i), so by definition $\widehat{B}_{a_i}^i(x)=a_i=B(x)$ but $\widehat{B}_{a_i}^i(x)\ne B'(x)$, a contradiction to Lemma~\ref{distinct sums fact}.  We conclude the result.
\end{proof}

We conclude this subsection by formally completing the proof of Theorem~\ref{lower bound}.

\begin{proof}[Proof of Theorem~\ref{lower bound}]
	The bound follows from Proposition~\ref{lower bound deterministic} and the tightness from Lemma~\ref{tight deterministic}.
\end{proof}

\subsection{Metric Dimension Upper Bounds}
We begin with a technical auxiliary lemma which we will need momentarily.
\begin{lem}\label{a implies b}
	Let $\tour$ be a single-elimination tournament on at least 2 vertices, $a\in P(\tour)$, and $x_a$ the unique out-neighbor of $a$.  If $b\in P(x_a)\sm \{a\}$, then $b\in P(x)$ for every match $x$ with $a\in P(x)$.
\end{lem}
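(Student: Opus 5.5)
The plan is to use the chain structure of the player sets containing $a$. By Proposition~\ref{Pu Sets}(iii), a match $x$ has $a\in P(x)$ exactly when there is a directed walk from $a$ to $x$. By Lemma~\ref{prefix path}, all such walks are prefixes of one maximal walk $(a=u_1,u_2,\ldots,u_t)$ from $a$. Since $a$ is a player, hence not the sink (the tournament has at least 2 vertices, and by Lemma~\ref{sources and sinks}(ii) every vertex reaches the unique sink $z$), $a$ has a unique out-neighbor, namely $u_2=x_a$.

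The key steps, in order, are as follows. First, fix a match $x$ with $a\in P(x)$ and take a directed walk from $a$ to $x$. Second, use Lemma~\ref{prefix path} to compare it with the one-step walk $(a,x_a)$. The walk to $x$ cannot be the shorter one unless $x=a$, which is impossible because $x$ is a match and $a$ is a source. So the walk to $x$ has $(a,x_a)$ as a prefix, and its tail is a directed walk from $x_a$ to $x$. Third, apply Proposition~\ref{Pu Sets}(iii), or directly Claim~\ref{Claim for Pu}, to get $P(x_a)\sub P(x)$. Since $b\in P(x_a)$, this gives $b\in P(x)$.

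An alternative route for the second and third steps uses Proposition~\ref{Pu Sets}(iv). We have $a\in P(x)\cap P(x_a)$, so either $P(x)$ and $P(x_a)$ are nested, or $x=x_a$. It remains to rule out $P(x)\subsetneq P(x_a)$. In that case there is a walk from $x$ to $x_a$ with $x\ne x_a$. Combining it with a walk from $a$ to $x$ gives a walk from $a$ to $x_a$ of length at least $2$ that passes through $x$. Its second vertex must be $x_a$, since that is the unique out-neighbor of $a$, and this produces a closed walk, contradicting Lemma~\ref{sources and sinks}(i).

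There is essentially no obstacle. The only care needed is the degenerate check that the walk from $a$ to $x$ has length at least $2$, so that its second vertex is forced to be $x_a$. This holds because $x\ne a$, as $x$ is a match and $a$ is a player.
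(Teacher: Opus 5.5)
Your proof is correct and follows essentially the paper's approach. The paper also notes that any walk from $a$ to $x$ has second vertex $x_a$, because $x_a$ is $a$'s unique out-neighbor, and then concatenates a walk from $b$ to $x_a$ with the tail, which is what your appeal to Claim~\ref{Claim for Pu} does; Lemma~\ref{prefix path} is not needed, since $u_2=x_a$ follows directly from the out-degree condition.
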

\begin{proof}
	If $b\in P(x_a)$ then by definition this means there exists a directed walk $(u'_1,\ldots,u'_s)$ from $b$ to $x_a$.  Similarly for every match $x$ with $a\in P(x)$ there exists a directed walk $(u_1,\ldots,u_t)$ from $a$ to $x$, where necessarily $t\ge 2$ since $a$ is not a match and also $u_2=x_a$ since this is the unique out-neighbor of $a$.  But this means that $(u'_1,\ldots,u'_{s-1},u'_s=u_2,u_3,\ldots,u_t)$ is a directed walk from $b$ to $x$, proving $b\in P(x)$ as desired.
\end{proof}

To apply Proposition~\ref{lifting brackets}, we must verify its most difficult hypothesis of (c).  We can achieve this for a given player $a$ provided two brackets satisfying certain properties lie in $\c{B}$.

\begin{lem}\label{favorite team}
	Let $\tour$ be a single-elimination tournament with at least 2 vertices and $\widehat{B}$ a bracket.  If $a,b$ are distinct players such that the out-neighbor $x_a\in N^+(a)$ has $b\in \{\widehat{B}(v):v\in N^-(x_a)\}$, then for any brackets $B,B'$ and scoring system $\sig$ satisfying that $\score_\sig(\widehat{B}_a,B)=\score_\sig(\widehat{B}_a,B')$ and $\score_\sig(\widehat{B}_b,B)=\score_\sig(\widehat{B}_b,B')$, we have for all $x\in M(\tour)$ that $B(x)=a$ if and only if $B'(x)=a$ .
\end{lem}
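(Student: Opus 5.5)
The approach is to compute the difference $\score_\sig(\widehat{B}_a,\widetilde{B})-\score_\sig(\widehat{B}_b,\widetilde{B})$ for an arbitrary bracket $\widetilde{B}$. We will show that it equals $\sum_{x:\,\widetilde{B}(x)=a}\sig(x)$ minus a quantity involving only matches won by $b$. Then we will argue that this difference determines exactly the set of matches won by $a$.

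\emph{Comparing the two brackets.} Since $a\ne b$ and $b\in\{\widehat{B}(v):v\in N^-(x_a)\}$, Lemma~\ref{bracket facts}(i) gives $b\in P(x_a)\sm\{a\}$. By Lemma~\ref{a implies b}, every match $x$ with $a\in P(x)$ also has $b\in P(x)$. Now compare $\widehat{B}_a$ and $\widehat{B}_b$ vertex by vertex.
\begin{itemize}
\item If $a\in P(x)$, then $\widehat{B}_a(x)=a$ and $\widehat{B}_b(x)=b$.
\item If $a\notin P(x)$ but $b\in P(x)$, then $\widehat{B}_a(x)=\widehat{B}(x)$ and $\widehat{B}_b(x)=b$.
\item If neither lies in $P(x)$, the two brackets agree.
\end{itemize}
The key point is that in the second case $\widehat{B}(x)=b$ already. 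Take such an $x$ and the in-neighbor $v\in N^-(x_a)$ with $\widehat{B}(v)=b$. Since $b\in P(v)$, $P(v)\subsetneq P(x_a)$ and $b\in P(x)$, Proposition~\ref{Pu Sets}(iv) and (iii) give a directed walk from $x_a$ to $x$ or from $x$ to $x_a$. The latter forces $P(x)\sub P(x_a)$. I expect this case analysis to be the main obstacle. If $x$ lies strictly below $x_a$ and $b\in P(x)$, then $x$ lies in $\tour_v$, and Lemma~\ref{bracket facts}(ii) with $\widehat{B}(v)=b\in P(x)$ gives $\widehat{B}(x)=b$. If $x$ lies weakly above $x_a$, then $a\in P(x)$, which contradicts the case assumption. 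So $\widehat{B}_a$ and $\widehat{B}_b$ differ exactly on the matches $x$ with $a\in P(x)$, where one predicts $a$ and the other predicts $b$.

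\emph{Computing the difference.} For any bracket $\widetilde{B}$,
\[\score_\sig(\widehat{B}_a,\widetilde{B})-\score_\sig(\widehat{B}_b,\widetilde{B})=\sum_{x:\,a\in P(x),\,\widetilde{B}(x)=a}\sig(x)-\sum_{x:\,a\in P(x),\,\widetilde{B}(x)=b}\sig(x).\]
The first sum can be written simply over $\widetilde{B}(x)=a$, by Lemma~\ref{bracket facts}(i). Note that $x_a$ is the first match of $a$, so $\widetilde{B}(x_a)\in\{a\}\cup\{\text{others}\}$.

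\emph{Extracting the matches won by $a$.} The matches with $a\in P(x)$ form a chain $x_a=y_1\to y_2\to\cdots\to y_k=z$ (Lemma~\ref{prefix path}). The set of matches $a$ wins in $\widetilde{B}$ is a prefix $\{y_1,\dots,y_j\}$, by Lemma~\ref{bracket facts}(ii). If $j\ge1$, then $b$ cannot win any $y_i$: for $i\le j$ this is clear, and for $i>j$ it holds because $b$ would have had to beat $a$ in match $y_j$ or earlier, which is impossible since $b\in P(x_a)$ and $a$ won $y_1,\dots,y_j$, while $b$ must win $y_1=x_a$ to continue. The second sum therefore vanishes and the difference equals $\sum_{i\le j}\sig(y_i)>0$. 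If $j=0$, the difference is $\le0$. Since $\sig>0$, the partial sums $\sum_{i\le j}\sig(y_i)$ are strictly increasing in $j$, so a positive difference determines $j$ uniquely, and a nonpositive one means $j=0$.

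\emph{Conclusion.} Applying this to $\widetilde{B}=B$ and $\widetilde{B}=B'$, the hypotheses make the two differences equal. Hence $B$ and $B'$ have the same $j$, so $B(x)=a$ if and only if $B'(x)=a$.
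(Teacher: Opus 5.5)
Your proposal is correct and follows the paper's proof essentially step for step. You show that $\widehat{B}_a$ and $\widehat{B}_b$ agree on every match $x$ with $a\notin P(x)$ because $\widehat{B}(x)=b$ below $v$, derive the same score-difference formula, and observe that $b$ wins no match on $a$'s path once $a$ wins $x_a$. The only difference is cosmetic: you read off the length $j$ of $a$'s winning run directly from the strictly increasing partial sums of $\sig$, whereas the paper argues by contradiction from a counterexample match $y$ chosen with $|P(y)|$ minimal.
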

\begin{proof}
	We begin with a few observations about our setup.
	\begin{claim}\label{Claim favorite team nice vertex}
		There exists an in-neighbor $v\in N^-(x_a)$ with $\widehat{B}(v)=b$ and with $b\in P(v)$ and $a\notin P(v)$.
	\end{claim}
	\begin{proof}
		The existence of $v\in N^-(x_a)$ with $\widehat{B}(v)=b$ is by hypothesis, and this immediately implies $b\in P(v)$ by Lemma~\ref{bracket facts}(i).  Moreover, because $a\in N^-(x_a)$ (with necessarily $a\ne v$ since $b\notin P(a)$), we must have $a\notin P(v)$ by Proposition~\ref{Pu Sets}(v).
	\end{proof}
	From now on any reference to $v$ refers to the vertex guaranteed by Claim~\ref{Claim favorite team nice vertex}.
	
	\begin{claim}\label{Claim favorite team a implies b}
		Every match $x$ with $a\in P(x)$ has $b\in P(x)$.
	\end{claim}
	\begin{proof}
		By Claim~\ref{Claim A size} and Proposition~\ref{Pu Sets}(v) we have $b\in P(v)\sub P(x_a)\sm \{a\}$, so the claim follows from Lemma~\ref{a implies b}.

	\end{proof}
	
	\begin{claim}
		We have $\widehat{B}_a(x)=\widehat{B}_b(x)$ for every match $x$ with $a\notin P(x)$.
	\end{claim}
	\begin{proof}
		If $x$ is a match with $a,b\notin P(x)$ then $\widehat{B}_a(x)=\widehat{B}(x)=\widehat{B}_b(x)$ by definition, so it suffices to prove this for matches with $a\notin P(x)$ and $b\in P(x)$.  Because $\widehat{B}(v)=b$, we must have $\widehat{B}(x)=b$ for all $x$ with $P(x)\sub P(v)$ by Lemma~\ref{bracket facts}(ii), which means $\widehat{B}_a(x)=\widehat{B}(x)=b=\widehat{B}_b(x)$ for all such $x$.  The only remaining case to check then is when $P(x)\not \sub P(v)$.  Because $b\in P(x)\cap P(v)$, any such $x$ has $P(v)\subsetneq P(x)$ by Proposition~\ref{Pu Sets}(iv), so by Proposition~\ref{Pu Sets}(iii) there is a directed walk $(u_1,\ldots,u_t)$ from $v$ to $x$.  Note that $u_2=x_a$ since this is the unique out-neighbor of $v$, so $(u_2,\ldots,u_t)$ is a directed walk from $x_a$ to $x$, implying by Proposition~\ref{Pu Sets}(iii) that $a\in P(x_a)\sub P(x)$, a contradiction to the assumption $a\notin P(x)$.  We conclude the result.
	\end{proof}
	Let $M_a\sub M(\tour)$ denote the set of matches $x$ with $a\in P(x)$.  From this claim and the definition of $\score_\sig$, we see that for any bracket $B^*$ we have
		\begin{equation}\score_\sig(\widehat{B}_a,B^*)-\score_\sig(\widehat{B}_b,B^*)=\sum_{x\in M_a:B^*(x)=a}\sig(x)-\sum_{x\in M_a:B^*(x)=b}\sig(x).\label{eq:difference in scores}\end{equation}

	Now assume for contradiction that the lemma is false for some match $y$ and choose this $y$ with $|P(y)|$ as small as possible.  Without loss of generality we may assume that $B(y)=a$ and $B'(y)\ne a$.
	
	We claim that $B(x)\ne b$ for every $x\in M_a$.  Indeed if $B(x)=b$ for some $x\in M_a$ then this means $a,b\in P(x)$ by Lemma~\ref{bracket facts}(i) and the definition of $M_a$.  Similarly we have $a,b\in P(y)$ due to Lemma~\ref{bracket facts}(i) and Claim~\ref{Claim favorite team a implies b}.  By Proposition~\ref{Pu Sets}(iv) we have that either $P(x)\sub P(y)$ or $P(y)\sub P(x)$.  In the first case because $a\in P(x)\sub P(y)$ and $B(y)=a$ we must have $B(x)=a$ by Lemma~\ref{bracket facts}(ii), a contradiction.  The same argument holds if $P(y)\sub P(x)$, proving the claim.  With this, we have by \eqref{eq:difference in scores} that
	\[\score_\sig(\widehat{B}_a,B)-\score_\sig(\widehat{B}_b,B)= \sum_{x\in M_a:B(x)=a}\sig(x)\ge \sum_{x\in M_a: P(x)\sub P(y)} \sig(x),\]
	with this last step using Lemma~\ref{bracket facts}(ii) to ensure $B(x)=a$ for all $x\in M_a$ with $P(x)\sub P(y)$.
	
	We claim that $B'(x)\ne a$ for any $x\in M_a$ with $P(x)=P(y)$ or $P(x)\not\sub P(y)$.  Indeed, the first part follows since Proposition~\ref{Pu Sets}(ii) implies $P(x)=P(y)$ only can happen for $x=y$ and we assumed $B'(y)\ne a$.  For the second part, having $x\in M_a$ means $a\in P(x)\cap P(y)$, so $P(x)\not \sub P(y)$ would imply $P(y)\sub P(x)$ by Proposition~\ref{Pu Sets}(iv).  But now $B'(x)=a$ would imply $B'(y)=a$ by Lemma~\ref{bracket facts}(ii), a contradiction.  This proves the claim, and hence by \eqref{eq:difference in scores} we have 
	\[\score_\sig(\widehat{B}_a,B')-\score_\sig(\widehat{B}_b,B')\le \sum_{x\in M_a:B'(x)=a}\sig(x)= \sum_{x\in M_a: P(x)\subsetneq P(y)}\sig(x)< \sum_{x\in M_a: P(x)\sub P(y)} \sig(x),\]
	with this last step using that $\sig(y)>0$ by definition of scoring systems.
	
	In total we find that 
	\[\score_\sig(\widehat{B}_a,B)-\score_\sig(\widehat{B}_b,B)>\score_\sig(\widehat{B}_a,B')-\score_\sig(\widehat{B}_b,B'),\]
	which implies that either  $\score_\sig(\widehat{B}_a,B)\ne \score_\sig(\widehat{B}_a,B')$ or $\score_\sig(\widehat{B}_b,B)\ne \score_\sig(\widehat{B}_b,B')$, a contradiction to our hypothesis.  We conclude the result.
\end{proof}

To effectively use this lemma we will need to strengthen Lemma~\ref{constructing lifting brackets} as follows.

\begin{lem}\label{constructing better lifting brackets}
	Let $\tour$ be a single-elimination tournament on at least 2 vertices and $u\in V(\tour)$.  If $A\sub P(\tour)\sm P(u)$ is a set of players with the property that for every $a\in A$ there exists $b\in A\sm \{a\}$ such that $b\in P(x)$ for every match $x$ with $a\in P(x)$, then for any bracket $\widetilde{B}$ of $\tour_u$ there exists a bracket $\widehat{B}$ of $\tour$ such that the following holds:
	\begin{itemize}
		\item[(i)] $\widehat{B}$ equals $\widetilde{B}$ when restricted to $V(\tour_u)$ and $\widehat{B}(x)\notin P(u)$ for all $x\in M(\tour)\sm P(\tour_u)$,
		\item[(ii)] Condition (i) continues to hold for $\widehat{B}_c$ for all $c\notin P(u)$, and
		\item[(iii)] For every $a\in A$ there exists some $b\in A\sm \{a\}$ such that the unique out-neighbor $x_a\in N^+(a)$ has $b\in \{\widehat{B}(v):v\in N^-(x_a)\}$.
	\end{itemize}
\end{lem}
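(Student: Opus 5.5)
We construct $\widehat{B}$ by imitating the proof of Lemma~\ref{constructing lifting brackets}, but choosing winners outside $\tour_u$ more carefully. First set $\widehat{B}(v)=\widetilde{B}(v)$ for $v\in V(\tour_u)$ and $\widehat{B}(c)=c$ for every player $c$. Then process the remaining unassigned matches $x\in M(\tour)\sm V(\tour_u)$ in order of increasing $|P(x)|$. For each such $x$, set $\widehat{B}(x)=\widehat{B}(v)$ for some $v\in N^-(x)$ with $P(v)\cap P(u)=\emptyset$; such a $v$ exists by Lemma~\ref{touru has at most one bad inneighbor}. The only new ingredient is the rule for choosing among the admissible in-neighbors. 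It will not be needed for (i) and (ii): these hold for any such choice, exactly as in Lemma~\ref{constructing lifting brackets}. Indeed, the bracket property holds since $x$ copies an in-neighbor's value. The condition $\widehat{B}(x)\notin P(u)$ holds because $\widehat{B}(v)\in P(v)$ by Lemma~\ref{bracket facts}(i) and $P(v)\cap P(u)=\emptyset$. For $\widehat{B}_c$ with $c\notin P(u)$, the vertices of $\tour_u$ are untouched because $c\notin P(v)\sub P(u)$, and any match outside $\tour_u$ now either keeps its value or takes the value $c\notin P(u)$.

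All the work is in (iii). For $a\in A$, write $x_a$ for the out-neighbor of $a$. We need some in-neighbor $v$ of $x_a$ other than $a$ (so $a\notin P(v)$) with $\widehat{B}(v)=b\in A\sm\{a\}$. The hypothesis applied to $a$ with $x=x_a$ gives some $b\in A\sm\{a\}$ with $b\in P(x_a)$, so $b\in P(v_a)$ for the in-neighbor $v_a$ of $x_a$ containing $b$. We then want $\widehat{B}(v_a)\in A$. Every $b'\in A\cap P(v_a)$ satisfies $b'\ne a$ and $b'\in P(x_a)$, so it is a valid witness, and it suffices to arrange that $\widehat{B}(v_a)\in A$ whenever $A\cap P(v_a)\ne\emptyset$.

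This suggests the choice rule: when assigning $x$, choose an admissible in-neighbor $v$ with $\widehat{B}(v)\in A$ whenever possible. We prove by induction on $|P(w)|$ the invariant that $\widehat{B}(w)\in A$ for every vertex $w\notin V(\tour_u)$ with $A\cap P(w)\ne\emptyset$. For players $w\in A$ this is immediate. For a match $w\notin V(\tour_u)$, take some $c\in A\cap P(w)$; then $c\in P(v)$ for some $v\in N^-(w)$. Since $c\notin P(u)$, either $P(v)\cap P(u)=\emptyset$, or $P(u)\subsetneq P(v)$ by Proposition~\ref{Pu Sets}(iv).
\begin{itemize}
\item[(1)] If $P(v)\cap P(u)=\emptyset$, then $v$ is admissible and, by induction (noting $v\notin V(\tour_u)$), $\widehat{B}(v)\in A$. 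The rule then picks a winner in $A$.
\item[(2)] If $P(u)\subsetneq P(v)$, we need another admissible in-neighbor $v'$ of $w$ with $A\cap P(v')\ne\emptyset$. Here we use the hypothesis on $A$ again: $c$ has a partner $b$ with $b\in P(x)$ for every match $x$ with $c\in P(x)$. If $c$ is a player in $N^-(w)$, the partner $b$ lies in $P(w)$, and we try to locate $b$ in an admissible sibling.
\end{itemize}

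The main obstacle is case (2), where all elements of $A$ inside $P(w)$ sit in the single in-neighbor of $w$ that meets $P(u)$. In that case the invariant may fail for $w$, but we may not need it there. What we actually need is the invariant only at $v_a$, where $a$ is a player in $N^-(x_a)$, and there $b$ can be chosen in an in-neighbor distinct from $a$. If $v_a$ meets $P(u)$, then $P(u)\subsetneq P(v_a)$, and $\widehat{B}(v_a)$ is forced to lie in $P(u)$, not in $A$. So we must instead select $b$ lying in a sibling of $a$ disjoint from $P(u)$. We will therefore refine the argument: choose $b$ from the hypothesis together with the disjointness from $P(u)$ of the relevant sibling. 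If that fails, we must examine whether such configurations are genuinely excluded, for instance via Lemma~\ref{touru has at most one bad inneighbor} combined with $x_a\notin V(\tour_u)$. We expect this to be the delicate point, and it may require restricting the invariant to vertices disjoint from $P(u)$.
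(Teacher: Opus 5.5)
\textbf{There is a genuine gap, and it is in case (2), which you flagged but left open.} Your construction cannot deliver (iii) in general, because the rule ``copy only from an in-neighbor $v$ with $P(v)\cap P(u)=\emptyset$'' is too restrictive. You also wrote that $\widehat{B}(v_a)$ is then forced into $P(u)$; under your own rule it is never in $P(u)$. The actual problem is that it need not lie in $A$.

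Here is a concrete failure. Take players $p,b,e,d,a$ and matches $w_1,w_2,z$ with $N^-(w_1)=\{p,b,e\}$, $N^-(w_2)=\{w_1,d\}$ and $N^-(z)=\{w_2,a\}$. Let $u=p$ and $A=\{a,b,e\}$. The hypothesis on $A$ holds: $a$ is witnessed by $b$, and $b,e$ witness each other. Your rule forces $\widehat{B}(w_2)=d$, since $w_1$ meets $P(u)$. Hence $\{\widehat{B}(v):v\in N^-(x_a)\}=\{a,d\}$ contains nothing from $A\sm\{a\}$, and (iii) fails for $a$. The only sibling of $a$ at $x_a$ meets $P(u)$. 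So neither ``choose $b$ in a sibling disjoint from $P(u)$'' nor an appeal to Lemma~\ref{touru has at most one bad inneighbor} can help: such configurations are not excluded.

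\textbf{The missing idea.} Condition (i) only requires the \emph{winner} of a match outside $\tour_u$ to lie outside $P(u)$. That winner may come up through the in-neighbor that meets $P(u)$. The paper exploits this via Lemma~\ref{bracket modification}:
\begin{itemize}
\item[$\bullet$] Start from the bracket $\widehat{B}^0$ of Lemma~\ref{constructing lifting brackets}, list $A=\{a_1,\ldots,a_t\}$, and set $\widehat{B}^i=\widehat{B}^{i-1}_{a_i}$ and $\widehat{B}=\widehat{B}^t$.
\item[$\bullet$] Conditions (i) and (ii) survive each step by the ``moreover'' part of Lemma~\ref{constructing lifting brackets}, since $a_i\notin P(u)$.
\item[$\bullet$] For every vertex $v$ with $A\cap P(v)\neq\emptyset$, we get $\widehat{B}(v)=a_j$ for the largest $j$ with $a_j\in P(v)$.
\item[$\bullet$] Applying this to the in-neighbor $v\neq a$ of $x_a$ that contains the witness $b'$ gives (iii) at once.
\end{itemize}
In the example above this yields $\widehat{B}(w_2)\in\{b,e\}$.

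\textbf{The same fix works in your framework.} Let $x$ copy any in-neighbor whose already-assigned value is not in $P(u)$, preferring values in $A$. Such an in-neighbor exists by Lemma~\ref{touru has at most one bad inneighbor}, so (i) and (ii) go through as before. Your invariant then holds for every $w\notin V(\tour_u)$ with $A\cap P(w)\neq\emptyset$. The in-neighbor of $w$ containing some $c\in A$ is not in $\tour_u$, so by induction its value lies in $A$, which is automatically outside $P(u)$. Case (2) then disappears.
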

\begin{proof}
	By Lemma~\ref{constructing lifting brackets} there exists a bracket $\widehat{B}^0$ for $\tour$ which satisfies (i), and moreover Lemma~\ref{constructing lifting brackets} shows that any bracket satisfying (i) also satisfies (ii).  Letting $A=\{a_1,\ldots,a_t\}$, we iteratively define $\widehat{B}^i=\widehat{B}_{a_i}^{i-1}$ and take $\widehat{B}:=\widehat{B}^t$.  That is, we define $\widehat{B}$ by taking $\widehat{B}^0$ and changing it so that $a_1$ wins every match it can, then changing this so that $a_2$ wins every match that it can, and so on.  Note that Lemma~\ref{constructing lifting brackets} guarantees that (i) and (ii) iteratively holds for all $\widehat{B}^i$, so it remains to check (iii).
	
	To this end, consider any $a\in A$.  Because $a\in P(x_{a})$, we have by hypothesis that there exists some $b'\in P(x_{a})\sm \{a\}$.  By Proposition~\ref{Pu Sets}(v) this means there exists some $v\in N^-(x_a)$ with $b'\in P(v)$ and necessarily $a\notin P(v)$ since $a\in N^-(x_a)$ has $b'\notin P(a)=\{a\}$.  Observe that by construction, $\widehat{B}(v)=a_j$ for the largest $j$ with $a_j\in P(v)$, noting that such a $j$ must exist because $b'\in A\cap P(v)$ and that $a_j\ne a$ since $a\notin P(v)$.  We conclude that (iii) holds with $b=a_j$.
\end{proof}

We now state our main technical result for this subsection.

\begin{prop}\label{metric upper bound given partition}
	Let $\tour$ be a single-elimination tournament, $u\in V(\tour)$ a vertex, $\c{B}_u$ a non-empty set of brackets for $\tour_u$, and $\c{A}$ a partition of $P(\tour)\sm P(u)$ such that for every $a\in P(\tour)\sm P(u)$, the set $A\in \c{A}$ with $a\in A$ has some $b\in A\sm \{a\}$ such that $b\in P(x)$ for every match $x$ with $a\in P(x)$.  Then there exists a set of brackets $\c{B}$ for $\tour$ with
	\[|\c{B}|= |P(\tour)\sm P(u)|+\max\{0,|\c{B}_u|-|\c{A}|\},\]
	such that $\c{B}$ is a $\sig$-resolving set whenever $\c{B}_u$ is a $\sig_u$-resolving set.
\end{prop}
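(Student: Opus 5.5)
We will build $\c{B}$ by lifting brackets of $\c{B}_u$ via Lemma~\ref{constructing better lifting brackets}, applying that lemma separately to each part $A\in\c{A}$, and then verify the hypotheses of Proposition~\ref{lifting brackets}.

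Write $\c{A}=\{A_1,\ldots,A_k\}$ and $\c{B}_u=\{\widetilde{B}^1,\ldots,\widetilde{B}^m\}$. Let $s=\max\{m,k\}$. If $m<k$, pad the list by repeating some element of $\c{B}_u$ (possible since $\c{B}_u\neq\emptyset$) so that we have $\widetilde{B}^1,\ldots,\widetilde{B}^s$. For $j\le k$, the part $A_j$ satisfies the hypothesis of Lemma~\ref{constructing better lifting brackets} by assumption. Apply that lemma with $A=A_j$ and $\widetilde{B}=\widetilde{B}^j$ to get a bracket $\widehat{B}^j$. Then put into $\c{B}$ the brackets $\widehat{B}^j_a$ for all $a\in A_j$.

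This gives $|A_j|$ brackets for each $j$, hence $|P(\tour)\sm P(u)|$ brackets in total. Their restrictions to $\tour_u$ are $\widetilde{B}^1,\ldots,\widetilde{B}^k$, by (ii) of that lemma.

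For the leftover indices $k<j\le m$ (present only when $m>k$), apply Lemma~\ref{constructing lifting brackets} to $\widetilde{B}^j$ and add the resulting bracket $\widehat{B}^j$ itself. This contributes $\max\{0,m-k\}$ further brackets. If different choices happen to coincide as brackets, we can count with multiplicity, or note that the size only needs to be at most the stated number. To get equality, add arbitrary extra brackets satisfying (b) if necessary; adding brackets does not hurt resolvability.

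Now assume $\c{B}_u$ is $\sig_u$-resolving and verify the hypotheses of Proposition~\ref{lifting brackets}.
\begin{itemize}
\item[(a)] The restrictions of the brackets in $\c{B}$ to $\tour_u$ include every $\widetilde{B}^j$, so they form a superset of $\c{B}_u$ and are therefore $\sig_u$-resolving.
\item[(b)] Lemma~\ref{constructing better lifting brackets}(i)--(ii) and Lemma~\ref{constructing lifting brackets} give $B(x)\notin P(u)$ for every $B\in\c{B}$ and every $x\in M(\tour)\sm V(\tour_u)$.
\item[(c)] Take any $a\in P(\tour)\sm P(u)$ and let $A_j\ni a$. By Lemma~\ref{constructing better lifting brackets}(iii) there is $b\in A_j\sm\{a\}$ with $b\in\{\widehat{B}^j(v):v\in N^-(x_a)\}$. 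Both $\widehat{B}^j_a$ and $\widehat{B}^j_b$ lie in $\c{B}$. So if $B,B'$ have equal scores against every element of $\c{B}$, Lemma~\ref{favorite team} applied to $\widehat{B}^j$, $a$, $b$ gives $B(x)=a\iff B'(x)=a$ for all matches $x$. This is exactly (c).
\end{itemize}
Proposition~\ref{lifting brackets} then shows $\c{B}$ is $\sig$-resolving.

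The main obstacle is the bookkeeping rather than any new idea. The key point is that each part $A_j$ needs its own base bracket $\widehat{B}^j$: the partner $b$ given by (iii) must satisfy the hypothesis of Lemma~\ref{favorite team} relative to the same $\widehat{B}$ used to form both $\widehat{B}_a$ and $\widehat{B}_b$. We also need to check that the padding when $m<k$ and the extra lifts when $m>k$ yield exactly the stated count.
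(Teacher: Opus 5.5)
Your proposal is correct and follows the paper's proof essentially step for step. Both build one base bracket per part $A_j$ via Lemma~\ref{constructing better lifting brackets}, add the modified brackets $\widehat{B}^j_a$ for $a\in A_j$, reuse an element of $\c{B}_u$ when $|\c{B}_u|<|\c{A}|$, add plain lifts for the leftover elements when $|\c{B}_u|>|\c{A}|$ (the paper applies Lemma~\ref{constructing better lifting brackets} with $A=\emptyset$, which is equivalent to your use of Lemma~\ref{constructing lifting brackets}), and check (a)--(c) of Proposition~\ref{lifting brackets} using Lemma~\ref{favorite team}.

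Your hedge about coincident brackets is unnecessary, and the count is exact. The brackets $\widehat{B}^j_a$ are pairwise distinct, since each predicts $a$ to win the final match. Each leftover lift restricts to a different element of $\c{B}_u$, and none of these restrictions is shared by any $\widehat{B}^j_a$.
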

\begin{proof}
	The result is trivial if $\tour$ has a single vertex, so we assume from now on that $\tour$ has at least 2 vertices.  Let $\c{A}=\{A_1,\ldots,A_t\}$ and $\c{B}_u=\{\widetilde{B}^1,\ldots,\widetilde{B}^m\}$ be as in the hypothesis.  For some slight ease of notation we first prove the result under the assumption $t\le m$, after which we briefly discuss the minor changes needed to make this argument work for $t>m$.
	
	For each $1\le i \le t$, let $\widehat{B}^i$ denote the bracket obtained by applying Lemma~\ref{constructing better lifting brackets} with respect to $\widetilde{B}=\widetilde{B}^{i}$ and $A=A_i$, and for each $t<i\le m$ let $\widehat{B}^i$ denote the bracket obtained by applying Lemma~\ref{constructing better lifting brackets} to $\widetilde{B}=\widetilde{B}^i$ and $A=\emptyset$.  With this defined, we let
	\[\c{B}=\bigcup_{i=1}^t\{\widehat{B}_a^{i}:a\in A_i\}\cup \{\widehat{B}^{t+1},\widehat{B}^{t+2},\ldots,\widehat{B}^m\},\]
	where to be clear this latter set is empty if $t=m$.
	
	Observe that this set of brackets has size $|P(\tour)\sm P(u)|+m-t$ since $\c{A}$ forms a partition of $P(\tour)\sm P(u)$, and this is exactly the size we aimed to prove in the case $t\le m$.  It thus remains to show that $\c{B}$ is $\sig$-resolving whenever $\c{B}_u$ is $\sig_u$-resolving, for which we aim to apply Proposition~\ref{lifting brackets}.
	
	Condition (b) of Proposition~\ref{lifting brackets} that each $B\in \c{B}$ has $B(x)\notin P(u)$ for $x\in M(\tour)\sm V(\tour_u)$ follows from the fact that each $\widehat{B}^i$ bracket defined above comes from Lemma~\ref{constructing better lifting brackets} which satisfies conditions (i) and (ii) of that lemma.  Condition (a) of Proposition~\ref{lifting brackets} that restricting each bracket of $\c{B}$ to $\tour_u$ gives a $\sig_u$-resolving set for $\tour_u$ will follow if we can show that for each $\widetilde{B}^i\in \c{B}_u$  there exists $B\in \c{B}$ whose restriction equals $\widetilde{B}^i$.  And indeed, this holds with $B=\widehat{B}^i$ if $i>t$, and if $i\le t$ we let $a\in A_i$ be arbitrary and take $B=\widehat{B}^{i}_a\in \c{B}$.  This verifies condition (a).
	
	It remains then to show condition (c) of Proposition~\ref{lifting brackets}.  To this end, consider any $a\in P(\tour)\sm P(u)$, say with $a\in A_i$.  By Lemma~\ref{constructing better lifting brackets}, there exists some $b\in A_i\sm \{a\}$ such that the unique out-neighbor $x_a\in N^+(a)$ has $b\in \{\widehat{B}^i(v):v\in N^-(x_a)\}$.  It follows from Lemma~\ref{favorite team} that any two brackets $B,B'$ with the same $\sig$-scores with respect to $\widehat{B}^i_a,\widehat{B}^i_b\in \c{B}$ have $B(x)=a$ if and only if $B'(x)=a$.  We conclude that (c) holds, and hence that $\c{B}$ is $\sig$-distinguishing by Proposition~\ref{lifting brackets}.
	
	We now briefly consider the case  $m\le t$.  For each $1\le i \le m$, let $\widehat{B}^i$ denote the bracket obtained by applying Lemma~\ref{constructing better lifting brackets} with respect to $\widetilde{B}=\widetilde{B}^{i}$ and $A=A_i$, and for each $m<i\le t$ let $\widehat{B}^i$ denote the bracket obtained by applying Lemma~\ref{constructing better lifting brackets} with $\widetilde{B}=\widetilde{B}^m$.  Let
	\[\c{B}=\bigcup_{i=1}^t\{\widehat{B}_a^{i}:a\in A_i\}.\]
	Note that this set has size $|P(\tour)\sm P(u)|$, and essentially the same analysis as before shows that the conditions of Proposition~\ref{lifting brackets} are satisfied, proving the result.	
\end{proof}
We note that it is possible to refine the bound of Proposition~\ref{metric upper bound given partition} under certain circumstances when $u$ is not an in-neighbor of the sink of $\tour$, but we refrain from stating such a result due to its complexity. To use Proposition~\ref{metric upper bound given partition}, we need to construct large partitions $\c{A}$ as in its statement, and it turns out to be relatively easy to construct such partitions of the best possible size.
\begin{lem}\label{partitions exist}
	Let $\tour$ be a single-elimination tournament with at least 2 vertices and $u\in V(\tour)$.  If there does not exist $x\in N^+(u)$ with $N^-(x)=\{a,u\}$ for some $a\in P(\tour)$, then there exists a partition $\c{A}$ of $P(\tour)\sm P(u)$ such that: 
	\begin{itemize}
		\item[(i)] For all $a\in A\in \c{A}$, there exists $b\in A\sm \{a\}$ such that $b\in P(x)$ for every match $x$ with $a\in P(x)$, and
		\item[(ii)] We have \[|\c{A}|=\sum_{x\in M(\tour)\sm V(\tour_u)} \left\lfloor \frac{|N^-(x)\cap P(\tour)\sm P(u)|}{2} \right\rfloor.\]
	\end{itemize}
\end{lem}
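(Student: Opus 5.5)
We construct the partition directly, with one part per match $x\notin V(\tour_u)$ that has at least two in-neighbors in $P(\tour)\sm P(u)$. The key point is that every player $a\in P(\tour)\sm P(u)$ has a unique out-neighbor $x_a$, and $x_a\notin V(\tour_u)$ since $a\notin P(u)\supseteq P(x)$ would otherwise fail. For each match $x\in M(\tour)\sm V(\tour_u)$, let $Q_x=N^-(x)\cap P(\tour)\sm P(u)$ be its player in-neighbors outside $P(u)$. The sets $Q_x$ partition $P(\tour)\sm P(u)$, because each such player lies in exactly $Q_{x_a}$. When $|Q_x|\ge 2$, we split $Q_x$ into $\lfloor |Q_x|/2\rfloor$ parts, each of size $2$ or $3$: use pairs, with one triple if $|Q_x|$ is odd. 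Any part $A\sub Q_x$ built this way satisfies (i). Indeed, for $a\in A$ pick any $b\in A\sm\{a\}$. Then $b\in N^-(x_a)\sub P(x_a)$, so $b\in P(x_a)\sm\{a\}$, and Lemma~\ref{a implies b} gives $b\in P(y)$ for every match $y$ with $a\in P(y)$.

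The main obstacle is the matches $x$ with $|Q_x|=1$, where $Q_x=\{a\}$ cannot form a part on its own. These must be absorbed into a neighboring part without changing the count in (ii), since $\lfloor 1/2\rfloor=0$. The player $a$ needs a partner $b\in P(x)\sm\{a\}$ with $b\notin P(u)$; Lemma~\ref{a implies b} then gives (i) whenever $b\in A$. Every other in-neighbor $v\in N^-(x)\sm\{a\}$ is either a match or a player in $P(u)$.

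First we try to find a match in-neighbor $v\in N^-(x)\sm\{a\}$ with $P(v)\cap P(u)=\emptyset$. Then $v\notin V(\tour_u)$, and the subtournament $\tour_v$ is disjoint from $P(u)$. Let $b$ be any player of $\tour_v$ which has been placed in some part $A'$ (with $b\in P(v)\sub P(x)$), and add $a$ to $A'$. Adding $a$ to $A'$ keeps (i) true for $a$ via $b$, and keeps it true for the original members of $A'$, since each of them still has its original partner. To guarantee that $b$ lies in some part, we process matches in increasing order of $|P(x)|$ and define parts inductively, so that all players in $P(v)$ are already assigned when $x$ is handled.

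If no such $v$ exists, Lemma~\ref{touru has at most one bad inneighbor} says at most one in-neighbor of $x$ meets $P(u)$. Since $|N^-(x)|\ge 2$, that in-neighbor $v'$ exists and $N^-(x)=\{a,v'\}$. If $v'=u$, the hypothesis of the lemma is violated, so this case is excluded. Otherwise $P(v')\not\sub P(u)$ (else $x\in\tour_u$ or $v'=u$), so $P(u)\subsetneq P(v')$ by Proposition~\ref{Pu Sets}(iv). The match $v'$ lies outside $V(\tour_u)$ and contains players of $P(\tour)\sm P(u)$ which precede $x$ in our order. We choose such a $b\in P(v')\sm P(u)$ that is already assigned to a part, and attach $a$ to that part as before. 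For this step we must check that such a $b$ exists: $P(v')\ne P(u)$ by Proposition~\ref{Pu Sets}(ii), so $P(v')\sm P(u)\neq\emptyset$. That $b$ is already assigned then follows by induction, since $b$'s own first match lies within $\tour_{v'}$, which is handled before $x$.

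Finally, parts are only created at matches with $|Q_x|\ge2$, in number $\lfloor|Q_x|/2\rfloor$, and singletons are only ever merged into existing parts. This gives exactly the count in (ii). One degenerate case remains: the very first singleton may have no assigned partner available, for instance when every $Q_y$ below it is a singleton too. This is where we expect the most care to be needed. We handle it by strong induction on $|P(x)|$, showing that every player of $P(\tour_{v})\sm P(u)$ is assigned once $v$ is processed. The base case is a match whose in-neighbors are all players; there $|Q_x|\ge 2$, unless the only non-$P(u)$ player in-neighbor sits beside $u$, and that configuration is exactly the excluded one $N^-(x)=\{a,u\}$.
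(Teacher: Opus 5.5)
Your proof is correct and is essentially the paper's argument: group the players of $P(\tour)\sm P(u)$ by their first match, create $\lfloor |Q_x|/2\rfloor$ parts at each match with $|Q_x|\ge 2$, and absorb each leftover player $a$ into an existing part containing some $b\in P(x_a)\sm\{a\}$, so that (i) comes from Lemma~\ref{a implies b} and (ii) is automatic. The only difference is how a host part is found: the paper does it in one step, via a match $x$ of minimum $|P(x)|$ with $P(x)\sub P(x_a)$ and $P(x)\not\sub P(u)$, which the excluded configuration $N^-(x)=\{a',u\}$ forces to have two player in-neighbors outside $P(u)$; you instead use a case analysis plus strong induction on $|P(x)|$ to descend one level at a time, and since that analysis always produces $b$ inside a match in-neighbor with strictly smaller player set, the ``degenerate case'' you flag at the end cannot occur and needs no separate base case.
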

\begin{proof}
	Let $\c{A}_1$ be the partition of $P(\tour)\sm P(u)$ into singletons. Iteratively if there exist $\{a\},\{b\}\in \c{A}_1$ with $N^+(a)=N^+(b)$ then we remove $\{a\},\{b\}$ from $\c{A}_1$ and replace them with $\{a,b\}$, and we let $\c{A}_2$ be the resulting set system after this process terminates. 
	
	\begin{claim}\label{Claim A size}
		The number of 2-element subsets of $\c{A}_2$ is $\sum_{x\in M(\tour)\sm V(\tour_u)} \lfloor \frac{|N^-(x)\cap P(\tour)\sm P(u)|}{2} \rfloor$.
	\end{claim}
	\begin{proof}
		Every $\{a,b\}\in \c{A}_2$ has $\{a,b\}\sub N^-(x)$ for some $x\in M(\tour)\sm V(\tour_u)$ by construction (since having $x\in V(\tour_u)$ would imply the same for $a,b$), and the number of such pairs coming from a given $x$ is exactly $ \lfloor \frac{|N^-(x)\cap P(\tour)\sm P(u)|}{2} \rfloor$, proving the claim.
	\end{proof}
	We now want to move each singleton of $\c{A}_2$ into an appropriate pair in $\c{A}_2$, for which we use the following.
	\begin{claim}
		Letting $x_a$ denote the unique vertex in $N^+(a)$, we have for every $a\in P(\tour)\sm P(u)$ that there exists $x\in M(\tour)\sm V(\tour_u)$ with $P(x)\sub P(x_a)$ such that $|N^-(x)\cap P(\tour)\sm P(u)|\ge 2$.
	\end{claim}
	\begin{proof}
		Let $x$ be a match with $|P(x)|$ as small as possible amongst matches with $P(x)\sub P(x_a)$ and $P(x)\not \sub P(u)$, noting that such a match exists since $x_a$ satisfies this condition due to the assumption $a\notin P(u)$.
		
		Observe that if there exists $v\in N^-(x)$ with $P(v)\sub P(u)$ then we must have $v=u$ since $P(v)\sub P(u)$ implies by Proposition~\ref{Pu Sets}(iii) that there is a directed walk from $v$ to $u$, and if $v\ne u$ then this implies there is a directed walk from $x$ to $u$ and hence $P(x)\sub P(u)$, a contradiction.  We also have that $N^-(x)\sm \{u\}\sub P(\tour)$, for if there were a match $y\in N^-(x)\sm \{u\}$ then this would satisfy $P(y)\not \sub P(u)$ by the first half of this observation and that $P(y)\subsetneq P(x)\sub P(x_a)$ by Proposition~\ref{Pu Sets}(v), a contradiction to our choice of $x$.
		
		From these observations, we see that the result holds if $|N^-(x)\sm \{u\}|\ge 2$.  Because  every match in a single-elimination tournament has at least 2 in-neighbors, we see that this can only fail to hold if $N^-(x)=\{a',u\}$ for some player $a'$, but we assumed no such match existed in the hypothesis of the lemma, proving the claim.
	\end{proof}
	
	Now for each $\{a\}\in \c{A}_2$ we let $x$ be a match as in the previous claim and observe that by construction there must exist some $\{a',b'\}\in \c{A}_2$ with $\{a',b'\}\sub N^-(x)$.  As such, for each $\{a\}\in \c{A}_2$ we can remove this singleton from the partition and then insert $a$ into one of these pairs $\{a',b'\}$ (possibly with multiple singletons being iteratively added to the same original pair in $\c{A}_2$), and we let $\c{A}$ denote the final set system obtained from this procedure.
	
	It is not difficult to see that $\c{A}$ is a partition of $P(\tour)\sm P(u)$ since we iteratively maintained this property through each alteration, and also that $|\c{A}|$ equals the number of pairs in $\c{A}_2$ which is the desired number we are looking for by Claim~\ref{Claim A size}.  Finally, (i) holds for any given $a$ by applying Lemma~\ref{a implies b} with $b$ either the unique player that $a$ was paired with in $\c{A}_2$ if such a vertex exists, or by taking $b$ to be any vertex in the pair $\{a',b'\}$ that $a$ was merged with as a singleton.
\end{proof}
Combining all of these results gives are main upper bound on metric dimensions of single-elimination tournaments.

\begin{thm}\label{upper bound technical}
	Let $\tour$ be a single-elimination tournament with $n$ players and $U\sub V(\tour)$ the set of vertices $u$ such that there exists no $x\in N^+(u)$ with $N^-(x)=\{a,u\}$ for some player $a\in P(\tour)$.  Then for every scoring system $\sig$ we have
	\[\dim(\tour,\sig)\le \min_{u\in U} n-|P(u)|+\max\{0,\dim(\tour_u,\sig_u)- \sum_{x\in M(\tour)\sm M(\tour_u)} \left\lfloor \frac{|N^-(x)\cap P(\tour)\sm P(u)|}{2} \right\rfloor\}.\]
\end{thm}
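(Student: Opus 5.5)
The plan is to fix an arbitrary $u\in U$ and combine Lemma~\ref{partitions exist} with Proposition~\ref{metric upper bound given partition}. Since the bound is a minimum over $u\in U$, it suffices to show that for each such $u$,
\[\dim(\tour,\sig)\le n-|P(u)|+\max\Bigl\{0,\dim(\tour_u,\sig_u)-\sum_{x\in M(\tour)\sm M(\tour_u)} \Bigl\lfloor \frac{|N^-(x)\cap P(\tour)\sm P(u)|}{2}\Bigr\rfloor\Bigr\}.\]

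First I dispose of degenerate cases. If $\tour$ has a single vertex, then $U$ consists of that vertex and the bound reads $0\le 0$. Otherwise $\tour$ has at least 2 vertices, so the hypotheses on $\tour$ in Lemma~\ref{partitions exist} and Proposition~\ref{metric upper bound given partition} hold. Note that $M(\tour)\sm M(\tour_u)=M(\tour)\sm V(\tour_u)$ by Lemma~\ref{touru are tournaments}, which shows $M(\tour_u)=M(\tour)\cap V(\tour_u)$. Hence the sum in the theorem is exactly the quantity in Lemma~\ref{partitions exist}(ii).

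Next I construct the resolving set.
\begin{itemize}
\item Because $u\in U$, Lemma~\ref{partitions exist} gives a partition $\c{A}$ of $P(\tour)\sm P(u)$ satisfying (i), with $|\c{A}|$ equal to that sum.
\item Let $\c{B}_u$ be a $\sig_u$-resolving set of $\tour_u$ of size $\dim(\tour_u,\sig_u)$. This is a single-elimination tournament by Lemma~\ref{touru are tournaments}.
\item If $\tour_u$ has a single vertex, the empty set is resolving and $\dim(\tour_u,\sig_u)=0$. Proposition~\ref{metric upper bound given partition} requires $\c{B}_u$ to be non-empty, so in that case I instead take $\c{B}_u$ to consist of the unique bracket. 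Any non-empty set of brackets is trivially resolving there. This choice does not hurt the bound: replacing $0$ by $1$ in $\max\{0,|\c{B}_u|-|\c{A}|\}$ changes nothing when $|\c{A}|\ge 1$.
\item When $|\c{A}|=0$ and $\tour_u$ is a single vertex, $P(\tour)\sm P(u)$ is empty, since $\c{A}$ partitions it. Then $u$ is the sink, $\tour=\tour_u$, and the case is already handled.
\end{itemize}

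Then I apply Proposition~\ref{metric upper bound given partition} with this $u$, $\c{B}_u$, and $\c{A}$. Its partition hypothesis is precisely Lemma~\ref{partitions exist}(i). It yields a $\sig$-resolving set of size
\[|P(\tour)\sm P(u)|+\max\{0,|\c{B}_u|-|\c{A}|\}=n-|P(u)|+\max\{0,\dim(\tour_u,\sig_u)-|\c{A}|\}.\]
Here I use $P(u)\sub P(\tour)$ and $|P(\tour)|=n$. This gives the claimed bound for $u$, and minimizing over $u\in U$ finishes the proof.

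The main obstacle is bookkeeping rather than a new idea. The points to check are the identification of the two index sets for the sum and the edge case where $\c{B}_u$ would be empty. The substantive work already lives in Proposition~\ref{metric upper bound given partition} and Lemma~\ref{partitions exist}.
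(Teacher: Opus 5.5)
Your proposal is correct and matches the paper's proof. Like the paper, you fix $u\in U$, take the partition from Lemma~\ref{partitions exist}, and feed it together with a minimum $\sig_u$-resolving set into Proposition~\ref{metric upper bound given partition}; when $\mathcal{B}_u=\emptyset$ (i.e.\ $\tour_u$ is a single player) you substitute the unique bracket, which costs nothing because $|\mathcal{A}|\ge 1$ once $n\ge 2$. The one step you leave implicit is that $\mathcal{B}_u\neq\emptyset$ whenever $\tour_u$ has at least two vertices, which is immediate since such a $\tour_u$ has at least two brackets.
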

Actually, this same bound holds if we take the minimum over all $u\in V(\tour)$ since one can show that this minimum is always achieved by some $u\in U$, but we refrain from showing the formal details of this.
\begin{proof}
	The result is trivial if $n=1$, so assume $n\ge 2$.  Let $u\in U$ be a vertex which achieves the minimum, let $\c{B}_u$ be a $\sig_u$-resolving set of size $\dim(\tour_u,\sig_u)$, and let $\c{A}$ be the partition of $P(\tour)\sm P(u)$ guaranteed by Lemma~\ref{partitions exist} since $u\in U$.  If $\c{B}_u\ne \emptyset$ then Proposition~\ref{metric upper bound given partition} guarantees the existence of a $\sig$-resolving set of the desired size.  If $\c{B}_u=\emptyset$, then $\c{B}_u$ being a $\sig_u$-resolving set for $\tour_u$ implies that $\tour_u$ must have only a single vertex.  In this case we let $\c{B}_u'=\{B\}$ where $B$ is the unique bracket of $\tour_u$ and apply Proposition~\ref{metric upper bound given partition} with respect to $\c{B}_u'$ and $\c{A}$ to obtain a $\sig$-resolving set of size 
	\[n-|P(u)|+\max\{0,1-|\c{A}|\}=n-|P(u)|,\]
	with this equality using that $\c{A}$ is a partition of $P(\tour)\sm P(u)$ which is a non-empty set since $n\ge 2$ and $P(u)$ has only a single vertex.  This is exactly the bound we wished to establish in this case, proving the result.
\end{proof}

A simple application of Theorem~\ref{upper bound technical} gives our claimed bound of $\dim(\tour,\sig)\le n-1$.
\begin{proof}[Proof of Theorem~\ref{upper bound}]
	Let $\tour$ be a single-elimination tournament with $n$ players and $\sig$ an arbitrary scoring system.  If $n=1$ then $\dim(\tour,\sig)=0=n-1$, so from now on we may assume $n\ge 2$.   For this we use a weakened version of Theorem~\ref{upper bound technical}, namely that
	\begin{equation}\dim(\tour,\sig)\le \min_{u\in U} n-|P(u)|+\dim(\tour_u,\sig_u).\label{eq:weak upper bound}\end{equation}

	If $\tour$ contains a player $a\in P(\tour)$ such that its unique out-neighbor $x_a$ (which exists because $n\ge 2$) has $N^-(x)\ne \{a,a'\}$ for some other $a'\in P(\tour)$, then we can apply \eqref{eq:weak upper bound} to $u=a\in U$ to conclude $\dim(\tour,\sig)\le n-1$ since $\dim(\tour_u,\sig_u)=0$.  We can thus assume no such player exists.
	
	Now let $x$ be a match of $\tour$ with $|P(x)|$ as small as possible.  Note that we must have $N^-(x)\sub P(\tour)$, as any match in $N^-(x)$ would be a match with a strictly smaller player set by Proposition~\ref{Pu Sets}(v), a contradiction to the minimality of $|P(x)|$.  As such, $\tour_x$ is a single-elimination tournament with a single match $x$.  
	
	Observe that if $x$ has an out-neighbor $y\in N^+(x)$ then it can not have $N^-(y)=\{x,a\}$ for some player $a$, since such a player would mean we were in the case discussed above.  This means $x\in U$, and thus we can apply \eqref{eq:weak upper bound} to conclude
	\[\dim(\tour,\sig)\le n-|P(x)|+\dim(\tour_x,\sig_x)=n-1,\]
	with this last step using that $\tour_x$ is a tournament with a single match $x$ which can easily be checked to have $\dim(\tour_x,\sig_x)=|P(x)|-1$ for any choice of $\sig_x$.  We conclude the result.
\end{proof}
Our bound for standard single-elimination tournaments similarly follows from Theorem~\ref{upper bound technical}, though we will have to use a slightly different approach to get the stronger conclusion that there exists a single set $\c{B}$ which is $\sig$-resolving for every choice of $\sig$.

\begin{proof}[Proof of Theorem~\ref{standard}]
	The lower bound follows from Theorem~\ref{lower bound} applied with $x$ the sink.  It remains to prove for all $n\ge 2$ a power of 2 that there exists a set $\c{B}$ which is $\sig$-resolving for any choice of $\sig$ for the standard single-elimination tournament with $n$ players, which we denote by $\tour^n$ for ease of reference.  We prove this result by induction on $n$, the case $n=2$ holding with $\c{B}=\{B\}$ any choice of bracket.
	
	Assume that we have proven the result up to some $n\ge 4$ a power of 2.  Let $z$ denote the sink of $\tour^n$ and $u\in N^-(z)$ arbitrary.  Observe that $\tour_u^n$ is isomorphic to $\tour^{n-1}$, so inductively we know that there exists a set of brackets $\c{B}_u$ of size $n/4$ for $\tour_u^n$ which is $\sig_u$-resolving for every choice of $\sig_u$.  Let $\c{A}$ be the partition of $P(\tour^n)\sm P(u)$ guaranteed by Lemma~\ref{partitions exist}, noting that $|\c{A}|=n/4$ (as can be seen by observing that $|N^-(x)\cap P(\tour)\sm P(u)|$ is either 0 or 2 for every match $x\in M(\tour^n)\sm V(\tour^n_u)$).  It follows from Proposition~\ref{metric upper bound given partition} that there exists a set of brackets $\c{B}$ of size $|P(\tour^n)\sm P(\tour^n_u)|=n/2$ which is $\sig$-resolving whenever $\c{B}_u$ is $\sig_u$-resolving, which by hypothesis holds for every choice of $\sig$.  We conclude the result.
\end{proof}

\subsection{Resolving Number Upper Bound}
To prove our upper bound for resolving numbers, we need the following consequence of Lemma~\ref{favorite team}.

\begin{cor}\label{favorite player corollary}
	For any single-elimination bracket $\tour$ and scoring system $\sig$, we have for any bracket $\widehat{B}$ that the set $\widehat{\c{B}}=\{\widehat{B}_a: a\in P(\tour)\}$ is a $\sig$-resolving set.
\end{cor}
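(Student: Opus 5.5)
The plan is to show that any two brackets $B,B'$ with $\score_\sig(\widehat{B}_a,B)=\score_\sig(\widehat{B}_a,B')$ for every player $a$ are equal. Since both brackets agree on players automatically, it suffices to show that for every player $a$ and every match $x$ we have $B(x)=a$ if and only if $B'(x)=a$. Given that, if $B(x)=a$ then $B'(x)=a$, so $B=B'$.

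The tool is Lemma~\ref{favorite team}, which asks for, given $a$, some player $b\ne a$ such that $b\in\{\widehat{B}(v):v\in N^-(x_a)\}$, where $x_a$ is the out-neighbor of $a$. Both $\widehat{B}_a$ and $\widehat{B}_b$ lie in $\widehat{\c{B}}$, so the score hypotheses of that lemma hold automatically, and it yields exactly the desired equivalence for $a$.

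To find $b$, assume $\tour$ has at least 2 players; if there is a single vertex there is one bracket and the empty set suffices, so the statement is trivial. Then every player $a$ has a unique out-neighbor $x_a$, and by definition $|N^-(x_a)|\ge 2$. Pick $v\in N^-(x_a)\sm\{a\}$ and set $b=\widehat{B}(v)$. By Lemma~\ref{bracket facts}(i), $b\in P(v)$. By Proposition~\ref{Pu Sets}(v), $P(v)$ is disjoint from $P(a)=\{a\}$, so $b\ne a$. This gives the required $b$ for every $a$.

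There is no real obstacle; the only point needing care is the degenerate case handled above and checking that the in-neighbor $v\ne a$ exists by property (d).
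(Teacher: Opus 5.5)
Your proposal is correct and matches the paper's proof essentially step for step. For each player $a$ you pick an in-neighbor $v\ne a$ of $x_a$ and set $b=\widehat{B}(v)$, which differs from $a$ by Proposition~\ref{Pu Sets}(v) and Lemma~\ref{bracket facts}(i); you then apply Lemma~\ref{favorite team} using $\widehat{B}_a,\widehat{B}_b\in\widehat{\c{B}}$. (A trivial quibble: in the single-vertex case the set $\widehat{\c{B}}$ is $\{\widehat{B}\}$ rather than empty, but it is resolving for the same reason.)
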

\begin{proof}
	The result is trivial if $\tour$ has a single vertex, so from now on we assume $\tour$ has at least 2 vertices. Assume for contradiction that there exist brackets $B,B'$ which have the same $\sig$-scores with respect to each element of $\widehat{\c{B}}$ and which have $B(x)\ne B'(x)$ for some match $x$, say with $a=B(x)$.  Note that the unique out-neighbor $x_a\in N^+(a)$ has at least one in-neighbor $v\ne a$ by definition of single-elimination tournaments and that $b:=\widehat{B}(v)$ does not equal $a$ by Proposition~\ref{Pu Sets}(v) and Lemma~\ref{bracket facts}(i).  It follows from Lemma~\ref{favorite team} and the fact that $\widehat{B}_a,\widehat{B}_b\in \widehat{\c{B}}$ that $B(x)=a$ implies $B'(x)=a$, giving the desired contradiction.
\end{proof}

We now prove our results for $\res(\tour,\sig)$.

\begin{proof}[Proof of Theorem~\ref{resolving with qmax alone}]
	The lower bound was established in Corollary~\ref{resolving qpair and qmax}, so  it remains for us to prove that if $\tour$ is a single-elimination tournament on at least 2 players and if 
	\[q_{\max}:=\max_{a\in P(\tour)} \Pr[R(z)=a],\]
	where $R$ is a uniformly bracket and $z$ is the sink of $\tour$, then $\res(\tour,\sig)\le (1-q_{max}) N$ for every scoring system $\sig$.  We prove this by a double counting argument.
	
	Let $\c{B}$ be a set of brackets which is not $\sig$-resolving and define $\c{P}$ to be the set of pairs of brackets $(\widehat{B},B)$ such that $B\in \{\widehat{B}_a:a\in P(\tour)\}$ and $B\notin \c{B}$.
	\begin{claim}
		For each bracket $\widehat{B}$, there exists a bracket $B$ with $(\widehat{B},B)\in \c{P}$.
	\end{claim}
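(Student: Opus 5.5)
The plan is to use Corollary~\ref{favorite player corollary}.

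Fix an arbitrary bracket $\widehat{B}$ and consider the family $\widehat{\c{B}}=\{\widehat{B}_a: a\in P(\tour)\}$. By Corollary~\ref{favorite player corollary} this family is a $\sig$-resolving set for $\tour$.

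Now suppose for contradiction that every $B\in\widehat{\c{B}}$ lies in $\c{B}$, so that $\widehat{\c{B}}\sub\c{B}$. Resolvability is monotone under taking supersets: if some $B_i\in\widehat{\c{B}}$ distinguishes a pair $B,B'$ by its scores, then the same $B_i$, viewed as an element of $\c{B}$, still distinguishes them. Hence $\c{B}$ would itself be $\sig$-resolving. This contradicts the standing assumption that $\c{B}$ is not $\sig$-resolving.

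Therefore some $a\in P(\tour)$ has $\widehat{B}_a\notin\c{B}$, and taking $B=\widehat{B}_a$ gives $(\widehat{B},B)\in\c{P}$ as required. I see no real obstacle here. The only points to check are that $\widehat{B}_a$ is a bracket, which is Lemma~\ref{bracket modification}, and that the definition of $\c{P}$ is satisfied verbatim, which holds by construction.
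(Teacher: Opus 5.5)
Your proposal is correct and matches the paper's proof: both apply Corollary~\ref{favorite player corollary} to $\{\widehat{B}_a : a\in P(\tour)\}$ and derive a contradiction from $\c{B}$ containing this whole family while not being $\sig$-resolving. Your explicit note that resolvability passes to supersets just spells out a step the paper leaves implicit.
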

	\begin{proof}
		If this were not the case then  $\{\widehat{B}_a:a\in P(\tour)\}$ would be entirely contained in $\c{B}$. This would imply that $\c{B}$ is $\sig$-resolving by Corollary~\ref{favorite player corollary}, a contradiction.
	\end{proof}
	\begin{claim}
		For each bracket $B$, the number of $\widehat{B}$ with $B\in\{\widehat{B}_a:a\in P(\tour)\}$ is at most $q_{\max}^{-1}$.
	\end{claim}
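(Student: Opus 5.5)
The plan is to fix a bracket $B$, write $a^*:=B(z)$, and count the brackets $\widehat{B}$ with $B\in\{\widehat{B}_a:a\in P(\tour)\}$ exactly. I expect the count to come out as $1/\Pr[R(z)=a^*]$. I will then argue that in general this is $\ge q_{\max}^{-1}$, not $\le$, so the stated claim fails as worded.

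\textbf{Step 1: the player $a$ is forced.} Suppose $B=\widehat{B}_a$. Since $P(z)=P(\tour)$, we have $a\in P(z)$, so $\widehat{B}_a(z)=a$ by definition. Hence $a=B(z)=a^*$. The set we must count is therefore the fiber of $B$ under the map $\widehat{B}\mapsto\widehat{B}_{a^*}$.

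\textbf{Step 2: describe the fiber.} Let $W=\{v:a^*\in P(v)\}$. By Lemma~\ref{prefix path} and Proposition~\ref{Pu Sets}(iii), $W$ is the directed walk $a^*=w_0\to w_1\to\cdots\to w_k=z$. By definition of $\widehat{B}_{a^*}$:
\begin{itemize}
	\item $\widehat{B}$ must agree with $B$ on $V(\tour)\sm W$;
	\item on $W$, $\widehat{B}$ may be any assignment that keeps it a bracket.
\end{itemize}
Conversely, any such $\widehat{B}$ satisfies $\widehat{B}_{a^*}=B$. Now process $w_1,\dots,w_k$ in order. For each $w_i$, Proposition~\ref{Pu Sets}(v) and Lemma~\ref{bracket facts}(i) show that the in-neighbors of $w_i$ carry pairwise distinct values: the off-path ones carry values fixed by $B$, and $w_{i-1}$ carries whatever value was already chosen. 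So $\widehat{B}(w_i)$ has exactly $d_i:=|N^-(w_i)|$ admissible values, independently of the earlier choices. The fiber therefore has size $F(a^*)=\prod_{i=1}^k d_i$, which depends only on $a^*$.

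\textbf{Step 3: relate $F$ to $\Pr[R(z)=a^*]$.} Every bracket $\widehat{B}$ lies in exactly one fiber, namely that of $\widehat{B}_{a^*}$. The images are exactly the brackets with winner $a^*$, and each has $F(a^*)$ preimages. Hence $N=N\Pr[R(z)=a^*]\cdot F(a^*)$, that is, $F(a^*)=1/\Pr[R(z)=a^*]$.

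\textbf{The obstacle.} Since $\Pr[R(z)=a^*]\le q_{\max}$, Step 3 gives $F(a^*)\ge q_{\max}^{-1}$. This is the opposite inequality, with equality only when $a^*$ is a most likely winner. The claim as stated is false in general. For a counterexample, take $N^-(z)=\{a,b,y\}$ with $N^-(y)=\{c,d\}$. Then $q_{\max}=1/3$, but a bracket with $B(z)=c$ has $6>3$ preimages.

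I therefore cannot prove the claim as worded. Steps 1--3 give the correct exact count, $1/\Pr[R(z)=B(z)]$. The surrounding double-counting argument needs an upper bound on fiber sizes, so that $N\le|\c{P}|\le\sum_{B\notin\c{B}}F(B(z))$. The exact formula gives the valid bound $F\le 1/\min_a\Pr[R(z)=a]$. The paper's version with $q_{\max}$ appears to need either restricting attention to brackets whose winner attains $q_{\max}$, or repairing the statement of Theorem~\ref{resolving with qmax alone}.
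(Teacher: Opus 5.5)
Your verdict is correct, and the error is in the paper's proof, not in your argument. The paper computes exactly your quantity, $\prod_{x\in M(\tour):\, b\in P(x)}|N^-(x)|=1/\Pr[R(z)=b]$ with $b=B(z)$. It gets the last equality from the product formulas for $N$ and for the number of brackets won by $b$. You get it instead from the fibers of $\widehat{B}\mapsto\widehat{B}_b$, which split all $N$ brackets into classes of equal size; the two routes are equivalent.

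The paper then asserts that ``by definition this quantity is at most $q_{\max}^{-1}$.'' Since $\Pr[R(z)=b]\le q_{\max}$, the inequality actually runs the other way. Your example is a valid counterexample: with $N^-(z)=\{a,b,y\}$ and $N^-(y)=\{c,d\}$, the unique bracket with $B(z)=c$ equals $\widehat{B}_c$ for all $6>3=q_{\max}^{-1}$ brackets $\widehat{B}$.

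\medskip

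One adjustment to your closing remark: the first of your two suggested fixes cannot work, because the upper bound in Theorem~\ref{resolving with qmax alone} is itself false. Let $N^-(z)=\{a,v\}$, $N^-(v)=\{y_1,y_2\}$, $N^-(y_1)=\{b,c\}$ and $N^-(y_2)=\{d,e\}$. Then $N=16$ and $q_{\max}=\Pr[R(z)=a]=1/2$, so the theorem asserts $\res(\tour,\sig)\le 8$.

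However, $x_{b,d}=v$. Consider the two brackets that send $y_1\mapsto b$, $y_2\mapsto d$, $z\mapsto a$ and differ only at $v$. By Proposition~\ref{necessary condition}, the $8$ brackets with $B(v)\in\{c,e\}$ do not resolve this pair. Hence $\res(\tour,\sig)\ge 9$ for every $\sig$.

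What the double count legitimately yields uses your fiber bound $1/q_{\min}$, where $q_{\min}=\min_{a}\Pr[R(z)=a]$. It shows every non-resolving set has at most $(1-q_{\min})N$ elements, i.e.\ $\res(\tour,\sig)\le(1-q_{\min})N+1$.
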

	\begin{proof}
		Let $b$ denote the player with $B(z)=b$.  Observe that $B\in \{\widehat{B}_a:a\in P(\tour)\}$ if and only if $\widehat{B}(x)=B(x)$ for all $x\in M(\tour)$ with $b\notin P(x)$.  As such, the number of $\widehat{B}$ with this property is exactly
		\[\prod_{x\in M(\tour): b\in P(x)} |N^-(x)|=\frac{\prod_{x\in M(\tour)} |N^-(x)|}{\prod_{x\in M(\tour): b\notin P(x)} |N^-(x)|}=\frac{N}{\prod_{x\in M(\tour): b\notin P(x)} |N^-(x)|}=\frac{1}{\Pr[R(z)=b]},\]
		with this last step using that a bracket $B'$ has $B'(z)=b$ if and only if $B'(x)=b$ for every $x$ with $b\in P(x)$ (and as such $B'$ can behave arbitrarily on all other $x$).  By definition this quantity is at most $q_{\max}^{-1}$, proving the result.
	\end{proof}
	These two claims together imply that
	\[N\le|\c{P}|\le q_{\max}^{-1} (N-|\c{B}|),\]
	or equivalently that $|\c{B}|\le N- q_{\max} N$, proving that every set $\c{B}$ which is not $\sig$-resolving has at most this size, giving the desired result.
\end{proof}

\section{Concluding Remarks}\label{sec:concluding}

We briefly mention three other natural variants of these problems that might be of interest for future study:
\begin{itemize}
	\item[(i)] What if instead of finding a set of brackets $\c{B}$ which distinguishes every pair of brackets from each other, we simply find a set of brackets which distinguishes a random bracket $R$ from every other bracket with high probability?
	\item[(ii)] What is the probability of a random set of brackets $\c{B}$ of a given size being $\sig$-resolving?
	\item[(iii)] What if our collection of brackets $\c{B}$ is chosen \textit{adaptively}, i.e.\ by first picking some $B_1$, and then choosing $B_2$ based on $\score_\sig(B_1,B)$, and so on.
\end{itemize}

We believe that we can to a large extent solve problems (ii) and (iii) and will do so in forthcoming work, so we focus our attention on (i) which can be stated more precisely as follows.

\begin{prob}
	Given a single-elimination tournament $\tour$ and a scoring system $\sig$, we say that a set of brackets $\c{B}$ $\sig$-resolves a bracket $B$ if every bracket $B'\ne B$ has $\score_\sig(B_i,B)\ne \score_\sig(B_i,B')$ for some $B_i\in \c{B}$.  Given an integer $t$ and some random bracket $R$ of $\tour$, estimate
	\[\max_{\c{B}_t} \Pr[\c{B}_t\ \sig\textrm{-resolves}\ R],\]
	where the maximum ranges over all possible sets of $t$ brackets $\c{B}_t$.
\end{prob}
We find this problem quite natural given that for March Madness we only care about distinguishing one particular bracket, namely the true one that actually happens.  Moreover, the distribution that this true bracket has in March Madness is typically not uniform, and it would be interesting to see if anything substantial could be said in the setting of not necessarily uniform random brackets.  For the case of uniform random brackets and $\tour$ being a standard single-elimination tournament with $n$ players, we can use Proposition~\ref{necessary condition} to show that
\[\max_{\c{B}_t} \Pr[\c{B}_t\ \sig\textrm{-resolves}\ R]\le 1-(1-2t/n)^2=4t/n-4t^2/n^2,\]
which is tight for $t=n/2$.
We can improve this for certain choices of $t$ and $\sig$.  Notably, it is not difficult to see that if $\score_\sig(B,B')$ can only take on $s$ possible values, then any set of $t$ brackets $\c{B}$ can $\sig$-distinguish at most $s^t$ brackets $B$.  In particular, if $\sig$ is the identically 1 scoring system for a standard single-elimination tournament then we have
\[\max_{\c{B}_t} \Pr[\c{B}_t\ \sig\textrm{-resolves}\ R]\le n^t 2^{1-n},\]
which gives a substantially stronger bound for small $t$.  It would be interesting to determine the true behavior of this probability as $t$ ranges over $1\le t\le n/2$.

\textbf{Acknowledgments}.  We thank Samuel Spirits~\cite{spirits} whose webnovel \textit{Executive Powers} about a super powered fighting tournament between Presidents of the United States served as the initial motivation for this paper.  We thank Aiya Kuchukova for recommending the random variant of this problem, Ruben Ascoli and Jade Lintott for providing some initial constructions, and Jason O'Neill for explaining to us how basketball works.


\bibliographystyle{abbrv}
\bibliography{BracketBIB}

\appendix 

\section{More on Resolving Numbers}
Here we prove a few more results around resolving numbers $\res(\tour,\sig)$.  In particular, our most general results are as follows where here $x_{a,b}$ is the match defined in Lemma~\ref{matches exist}.

\begin{thm}\label{appendix resolving}
	Let $\tour$ be a single-elimination tournament with at least 2 players.   Let $z$ denote the sink of $\tour$, let $R$ be a uniformly random bracket of $\tour$, and define the quantities
	\[q_{\max}=\max_{a\in P(\tour)}\Pr[R(z)=a],\]
	and 
	\[q_{\mathrm{pair}}=\min_{a,b\in P(\tour),\ a\ne b} \Pr[R(x_{a,b})\in \{a,b\}].\]
	If there are a total of $N$ brackets for $\tour$, then for every scoring system $\sig$ we have
	\[(1-q_{\mathrm{pair}})N< \res(\tour,\sig)\le (1-q_{\max})N.\]
	Moreover, for every $\tour$ with at least 2 players, there exist $\sig$ such that $\res(\tour,\sig)=(1-q_{\mathrm{pair}})N+1$.
\end{thm}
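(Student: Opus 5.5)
The two inequalities are already in hand. The lower bound $(1-q_{\mathrm{pair}})N<\res(\tour,\sig)$ is exactly Proposition~\ref{resolving lower bound technical}. The upper bound $\res(\tour,\sig)\le(1-q_{\max})N$ is the double-counting argument in the proof of Theorem~\ref{resolving with qmax alone}, which never used the weaker lower bound. So the only new content is the tightness statement. We need to produce, for every $\tour$ with at least $2$ players, a scoring system $\sig$ such that every set of $(1-q_{\mathrm{pair}})N+1$ brackets is $\sig$-resolving.

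We take $\sig$ with distinct subset sums, for instance $\sig$ given by distinct powers of $2$ on the matches. The plan is to prove a characterization: for such $\sig$, a set $\c{B}$ is $\sig$-resolving if and only if for every pair of distinct players $a,b$ there is some $B_i\in\c{B}$ with $B_i(x_{a,b})\in\{a,b\}$. Necessity is Proposition~\ref{necessary condition}. Given sufficiency, the count is immediate. A set of size $(1-q_{\mathrm{pair}})N+1$ cannot lie inside any $\c{B}_{a,b}$, the set of brackets with $B(x_{a,b})\notin\{a,b\}$, because each $\c{B}_{a,b}$ has size $(1-\Pr[R(x_{a,b})\in\{a,b\}])N\le(1-q_{\mathrm{pair}})N$. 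So the set meets the complement of every $\c{B}_{a,b}$, and the condition holds.

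For sufficiency, suppose $B\neq B'$ have equal $\sig$-scores against every $B_i\in\c{B}$. By Lemma~\ref{distinct sums fact}, for every $i$ and every match $x$ we have $B_i(x)=B(x)$ if and only if $B_i(x)=B'(x)$. Choose a match $x$ with $B(x)\ne B'(x)$ and $|P(x)|$ minimal, and set $a=B(x)$, $b=B'(x)$.

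The main step is to show that $x=x_{a,b}$. By Lemma~\ref{bracket facts}(i), $a$ lies in $P(u)$ for some in-neighbor $u$ of $x$ with $B(u)=a$, and $b$ lies in $P(u')$ for some in-neighbor $u'$ with $B'(u')=b$. If $u=u'$, then either $u$ is a match with $B(u)=a\ne b=B'(u)$, which contradicts the minimality of $|P(x)|$, or $u$ is a player, in which case $a=b=u$, again a contradiction. Hence $u\ne u'$, and by Proposition~\ref{Pu Sets}(v) we get $P(u)\cap\{a,b\}=\{a\}$ and $P(u')\cap\{a,b\}=\{b\}$. Uniqueness in Lemma~\ref{matches exist} then gives $x=x_{a,b}$.

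Now pick $B_i$ with $B_i(x)\in\{a,b\}$. If $B_i(x)=a$, then $B_i(x)=B(x)$ but $B_i(x)\neq B'(x)$, contradicting the equivalence above. The case $B_i(x)=b$ is symmetric. So no such pair $B\neq B'$ exists, and $\c{B}$ is resolving.

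Finally, we must make sure the equality $\res=(1-q_{\mathrm{pair}})N+1$ is exact and not merely an upper bound. The lower bound is strict and $\res$ is an integer. We also check that $q_{\mathrm{pair}}N$ is an integer: it equals the number of brackets with $R(x_{a,b})\in\{a,b\}$ for a minimizing pair. These two facts together pin $\res$ to exactly $(1-q_{\mathrm{pair}})N+1$.
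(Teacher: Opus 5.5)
Your proposal is correct and follows essentially the same route as the paper's proof (Lemma~\ref{tight resolving}): take $\sig$ with distinct subset sums, use counting to find, for every pair $a,b$, a bracket in $\c{B}$ with $B(x_{a,b})\in\{a,b\}$, pick a minimal disagreement match $x$, show $x=x_{a,b}$, and contradict Lemma~\ref{distinct sums fact}. Your proof that $x=x_{a,b}$ differs slightly: you follow the in-neighbors through which $a$ and $b$ advance, whereas the paper rules out an in-neighbor containing both; your explicit remark that $q_{\mathrm{pair}}N$ is an integer also fills in the step the paper leaves implicit when it upgrades the strict lower bound to $\res(\tour,\sig)\ge(1-q_{\mathrm{pair}})N+1$.
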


Observe that we have already proven the general lower and upper bounds of Theorem~\ref{appendix resolving} through Proposition~\ref{resolving lower bound technical} and Theorem~\ref{resolving with qmax alone}, so it remains only to prove the tightness of the lower bound.  As we did for Theorem~\ref{lower bound}, we do this for all $\sig$ with distinct subset sums.

\begin{lem}\label{tight resolving}
	If $\tour$ is a single-elimination tournament and if $\sig$ is a scoring system with distinct subset sums, then
	\[\res(\tour,\sig)=(1-q_{\mathrm{pair}})N+1.\]
\end{lem}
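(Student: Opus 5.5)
The lower bound $\res(\tour,\sig)>(1-q_{\mathrm{pair}})N$ is already Proposition~\ref{resolving lower bound technical}, so it remains to show that every set $\c{B}$ of at least $(1-q_{\mathrm{pair}})N+1$ brackets is $\sig$-resolving. The plan is to identify exactly which sets fail to be resolving when $\sig$ has distinct subset sums, and then count.

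\textbf{Step 1 (characterization).} We claim that for $\sig$ with distinct subset sums, a set $\c{B}$ is $\sig$-resolving if and only if for every pair of distinct players $a,b$ some $B_i\in\c{B}$ has $B_i(x_{a,b})\in\{a,b\}$. Necessity is Proposition~\ref{necessary condition}. For sufficiency, let $B,B'$ have equal scores against every $B_i$. Lemma~\ref{distinct sums fact} then gives, for every match $x$ and every $i$, that $B(x)=B_i(x)$ if and only if $B'(x)=B_i(x)$.

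Suppose for contradiction that $B\ne B'$. Choose a match $x$ with $B(x)=a\ne b=B'(x)$ and $|P(x)|$ minimal. By minimality, the in-neighbors of $x$ agree under $B$ and $B'$. Since $a\in P(u_a)$ and $b\in P(u_b)$ for in-neighbors $u_a\ne u_b$ (they are distinct because $B(u_a)=a\neq b=B(u_b)$ and these agree), we get $x=x_{a,b}$ by the uniqueness in Lemma~\ref{matches exist}. The hypothesis supplies $B_i$ with $B_i(x)\in\{a,b\}$. Then $B_i$ agrees with exactly one of $B,B'$ at $x$, which contradicts the equivalence above. Hence $B=B'$.

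\textbf{Step 2 (counting).} Using Step 1, a non-resolving $\c{B}$ must avoid the event $B(x_{a,b})\in\{a,b\}$ for some pair $a,b$. So $\c{B}\sub\c{B}_{a,b}$, where $\c{B}_{a,b}$ is as in the proof of Proposition~\ref{resolving lower bound technical}. This gives
\[|\c{B}|\le\max_{a,b}|\c{B}_{a,b}|=(1-q_{\mathrm{pair}})N.\]
Consequently every set of size $(1-q_{\mathrm{pair}})N+1$ is resolving, so $\res\le(1-q_{\mathrm{pair}})N+1$. Combined with the strict lower bound and integrality (the quantity $(1-q_{\mathrm{pair}})N$ is an integer count), we obtain equality.

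The main obstacle is the sufficiency direction of Step 1, specifically verifying that the minimal disagreeing match really is $x_{a,b}$. This requires that the winners feeding into $x$ come from distinct in-neighbors, which follows from Proposition~\ref{Pu Sets}(v) together with Lemma~\ref{bracket facts}. The rest is routine.
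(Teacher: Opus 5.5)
Your proposal is correct and follows essentially the same route as the paper. Both proofs first count to obtain, for every pair $a,b$, some $B_{a,b}\in\c{B}$ with $B_{a,b}(x_{a,b})\in\{a,b\}$. Both then take a disagreeing match $x$ with $|P(x)|$ minimal, show $x=x_{a,b}$, and derive a contradiction from Lemma~\ref{distinct sums fact}. Framing this as an if-and-only-if characterization, with necessity from Proposition~\ref{necessary condition}, is cosmetic. So is identifying $x=x_{a,b}$ through agreement of the in-neighbors rather than through Lemma~\ref{bracket facts}(ii).
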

\begin{proof}
	The lower bound follows from Proposition~\ref{resolving lower bound technical}.  For the upper bound, let $\c{B}$ be a set of at least $(1-q_{\mathrm{pair}})N+1$ brackets.
	\begin{claim}
		For any distinct players $a,b$, there exists some $B_{a,b}\in \c{B}$ with $B_{a,b}(x_{a,b})\in \{a,b\}$.
	\end{claim}
	\begin{proof}
		Indeed, the number of brackets $B$ which have $B(x_{a,b})\in \{a,b\}$ for a given $a,b$ is at least $q_{\mathrm{pair}}N$ by definition of $q_{\mathrm{pair}}$, so our assumption on $|\c{B}|$ implies that at least one of these $q_{\mathrm{pair}}N$ brackets must be included in $\c{B}$, proving the claim.
	\end{proof}
	Now assume for contradiction that there exists brackets $B,B'$ with $\dist_\sig(B_i,B)= \dist_\sig(B_i,B')$ for all $B_i\in \c{B}$ and that $B(x)\ne B'(x)$.   for some $x$.  We choose such an $x$ with $|P(x)|$ as small as possible and let $a=B(x)$ and $b=B'(x)$.  Note that $x$ is necessarily a match because all brackets agree on $P(\tour)$ by definition.
	\begin{claim}
		We have $x=x_{a,b}$.
	\end{claim}
	\begin{proof}
		We certainly have $a,b\in P(x)$ by Lemma~\ref{bracket facts}(i) since $B(x)=a$ and $B'(x)=b$.  If there existed some $u\in N^-(x)$ with $a,b\in P(u)$ then Proposition~\ref{Pu Sets}(iii) and Lemma~\ref{bracket facts}(ii) imply that $B(u)=B(x)=a$ and $B'(u)=B'(x)=b$.  But this means $B(u)\ne B'(u)$ with $P(u)\subsetneq P(x)$ by Proposition~\ref{Pu Sets}(v), a contradiction to choosing $x$ so that $|P(x)|$ is as small as possible.  By Proposition~\ref{Pu Sets}(v) it must be that $x$ contains in-neighbors containing exactly one of $a,b$ in their player sets, proving that $x=x_{a,b}$.
	\end{proof}
	Without loss of generality assume $B_{a,b}\in \c{B}$ has $B_{a,b}(x_{a,b})=B_{a,b}(x)=a$.  This means that $B_{a,b}(x)=B(x)$ and $B_{a,b}(x)\ne B'(x)$.  But the fact that $\sig$ has distinct subset sums and $\dist_\sig(B_{a,b},B)=\dist_\sig(B_{a,b},B')$ implies that $B_{a,b}(x)=B(x)$ if and only if $B_{a,b}(x)=B'(x)$ by Lemma~\ref{distinct sums fact}, a contradiction.
\end{proof}

Finally, we note that the bounds $(1-q_{\max})N<\res(\tour,\sig)\le (1-2q_{\max})N$ of Theorem~\ref{resolving with qmax alone} are effective whenever $q_{\max}$ is far from $\half$.  When $q_{\max}\approx \half$ we can turn to the stronger lower bound of Theorem~\ref{appendix resolving} and still conclude that  $\res(\tour,\sig)$ is always at least a constant proportion of the total number of brackets for $\tour$ for every choice of $\tour$.

\begin{cor}\label{resolving quarter bound}
	If $\tour$ is a single-elimination tournament with at least 2 players and $N$ total brackets, then for any scoring system $\sig$ we have
	\[\res(\tour,\sig)> \quart N.\]
\end{cor}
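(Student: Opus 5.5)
The plan is to derive the bound from the lower bound $(1-q_{\mathrm{pair}})N<\res(\tour,\sig)$ of Theorem~\ref{appendix resolving} (equivalently Proposition~\ref{resolving lower bound technical}). It suffices to show $q_{\mathrm{pair}}\le \frac34$ whenever $\tour$ has at least $3$ players, and to treat the $2$-player case by hand. As in the proof of Corollary~\ref{resolving qpair and qmax}, I will only use pairs $a,b$ with $x_{a,b}=z$. Let $N^-(z)=\{u_1,\ldots,u_k\}$ with $k\ge 2$. For any $a\in P(u_i)$ and $b\in P(u_j)$ with $i\ne j$ we have $x_{a,b}=z$, and hence
\[q_{\mathrm{pair}}\le \Pr[R(z)=a]+\Pr[R(z)=b].\]
So the goal is to choose $a,b$ so that this sum is at most $\frac34$.

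\textbf{Step 1: distribution of $R(z)$.} A uniformly random bracket amounts to choosing, independently for each match $x$, a uniformly random in-neighbor whose winner advances. This is the product structure $N=\prod_{x\in M(\tour)}|N^-(x)|$ already used in the proof of Theorem~\ref{resolving with qmax alone}. Consequently, for $a\in P(u_i)$,
\[\Pr[R(z)=a]=\frac1k\Pr[R(u_i)=a],\]
since $R(z)=a$ exactly when $z$ selects $u_i$ and $R(u_i)=a$, and these two events are independent. Summing over $a\in P(u_i)$ gives $\sum_{a\in P(u_i)}\Pr[R(u_i)=a]=1$.

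\textbf{Step 2: choosing $a$ and $b$.} Within each $P(u_i)$, pick the player minimizing $\Pr[R(u_i)=a]$. If $|P(u_i)|\ge 2$, this minimum is at most $\frac12$, so that player wins $z$ with probability at most $\frac1{2k}$. If $|P(u_i)|=1$, the probability is exactly $\frac1k$. Take $a,b$ to be these minimizers from $u_1,u_2$.
\begin{itemize}
\item If $k\ge 3$, the sum is at most $\frac2k\le\frac23$.
\item If $k=2$ and at least one of $|P(u_1)|,|P(u_2)|$ is at least $2$, the sum is at most $\frac12+\frac14=\frac34$.
\end{itemize}
In both cases $q_{\mathrm{pair}}\le \frac34$, and therefore $\res(\tour,\sig)>\frac14 N$.

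\textbf{Step 3: the remaining case.} The only case left is $k=2$ with $|P(u_1)|=|P(u_2)|=1$, that is, the tournament with exactly two players and a single match. Then $N=1$, and Proposition~\ref{resolving lower bound technical} still gives $\res(\tour,\sig)>(1-q_{\mathrm{pair}})N\ge 0$. Since $\res$ is an integer, $\res(\tour,\sig)\ge 1>\frac14=\frac14N$.

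\textbf{Main obstacle.} The only step needing care is the independence claim in Step 1. I would justify it by the same counting as in Theorem~\ref{resolving with qmax alone}: the brackets with $R(z)=a$ are determined by forcing the choices along $a$'s path to $z$, and the remaining choices are free. This gives $\Pr[R(z)=a]=\prod_{x:\,a\in P(x)}|N^-(x)|^{-1}$, which factors as $\frac1k$ times the analogous product for $\tour_{u_i}$, i.e.\ $\frac1k\Pr[R(u_i)=a]$.
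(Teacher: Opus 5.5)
Your argument is correct apart from a harmless slip in Step 3. The two-player tournament has $N=2$ brackets, one for each possible winner of $z$, not $N=1$. The conclusion is unaffected: your integrality argument gives $\res(\tour,\sig)\ge 1$ regardless of $N$, and $1>\frac12=\frac14 N$.

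Your route differs from the paper's. The paper works at the bottom of the tournament:
\begin{itemize}
\item It first splits off the single-match case, where $q_{\mathrm{pair}}=2/n\le\frac23$.
\item Otherwise it takes any two players $a,b$ whose first matches $x_a\ne x_b$ differ. It notes that $R(x_{a,b})\in\{a,b\}$ forces one of them to survive its first match.
\item Independence of the choices at $x_a$ and $x_b$, together with $\Pr[R(x_c)=c]\le\frac12$, then gives $q_{\mathrm{pair}}\le 1-\frac12\cdot\frac12$.
\end{itemize}
You instead work at the top, using only pairs that meet in the final, as in Corollary~\ref{resolving qpair and qmax}. You repair the crude bound $q_{\mathrm{pair}}\le 2q_{\max}$, which is useless when a lone player enters only at the final and wins with probability $\frac12$. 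You do this by taking, in each branch, the player least likely to emerge, via the factorization $\Pr[R(z)=a]=\frac1k\Pr[R(u_i)=a]$.

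The paper's argument is shorter: it needs only the independence of two distinct first-round matches, not the distribution of $R(z)$. Yours handles the single-match tournament inside the $k\ge 3$ case with no separate treatment. It also gives slightly sharper bounds in some configurations, namely $\frac2k$ when the sink has $k\ge3$ in-neighbors and $\frac12$ when $k=2$ and both branches contain at least two players.
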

The constant $\quart$ is best possible as there exists a 3-player tournament with $\res(\tour,\sig)=2=\quart N+1$.  One can improve this constant if one assumes a larger number of players, but the upper bound of Theorem~\ref{appendix resolving} implies that there exist tournaments with $\res(\tour,\sig)\le \half N$. 

\begin{proof}
	If $\tour$ has exactly two players then $N=2$ and one can check that $\res(\tour,\sig)=1> \quart N$ for any scoring system, so from now on we assume $\tour$ has some $n\ge 3$ players.  We will prove our result by showing that $q_{\mathrm{pair}}\le \frac{3}{4}$ in this case, from which the result will follow from Theorem~\ref{appendix resolving}.
	
	First assume that $\tour$ has a match $x$ with $P(\tour)\sub N^-(x)$.  It is not difficult to see that this is equivalent to saying that the only match of $\tour$ is $x$, which means that $x_{a,b}=x$ for all distinct $a,b$ and that every bracket is defined by picking one player to win $x$.  This in turn means that a uniformly random bracket $R$ simply picks one of the $n$ players uniformly at random to win this match.  We conclude that $q_{\mathrm{pair}}=2n^{-1}\le \frac{2}{3}\le \frac{3}{4}$, proving the result in this case.
	
	From now on we assume that $\tour$ has no match with $P(\tour)\sub N^-(x)$.  For each player $a$, let $x_a$ be the unique match with $x_a\in N^+(a)$.    By assumption of the case that we are in, there must exist distinct players $a,b$ with $x_a\ne x_b$.
	
	Let $R$ be a uniform random bracket, and for each player $c$ define $E_c$ to be the event that $R(x_c)=c$.  Observe that to have $R(x_{a,b})\in \{a,b\}$ we must have at least one of $E_a$ or $E_b$ occurring (otherwise $a$ and $b$ win no matches whatsoever), so $q_{\mathrm{pair}}\le \Pr[E_a\cup E_b]$.  Moreover, because $x_a\ne x_b$, the events $E_a,E_b$ are independent (since a uniform random bracket uniformly and independently picks $R(x)$ from $\{R(u):u\in N^-(x)\}$), and hence  
	\[q_{\mathrm{pair}}\le \Pr[E_a\cup E_b]=1-\Pr[\overline{E}_a\cap \overline{E}_b]=1-\Pr[\overline{E}_a]\Pr[\overline{E}_b]\le 1-\half\cdot \half, \]
	with this last step using that every match of a single-elimination tournament has at least two in-neighbors by definition, and hence $\Pr[E_c]\le \half $ for all $c$. We conclude the result.
\end{proof}

\end{document}